\setlist{  
  listparindent=\parindent,
  parsep=0pt,
}
\theoremstyle{plain}
\newtheorem{thm}{Theorem}[section]
\newtheorem{prop}[thm]{Proposition}
\newtheorem{lemma}[thm]{Lemma}
\newtheorem{cor}[thm]{Corollary}
\theoremstyle{definition}
\newtheorem{mydef}[thm]{Definition}
\newtheorem{remark}[thm]{Remark}
\Crefname{thm}{Theorem}{Theorems}
\Crefname{prop}{Proposition}{Propositions}
\numberwithin{equation}{section} 
\DeclarePairedDelimiter\ipp{\langle}{\rangle}
\DeclarePairedDelimiter{\pa}{\lparen}{\rparen}
\DeclarePairedDelimiter{\jp}{\langle}{\rangle}
\DeclareMathOperator{\supp}{supp}
\newcommand{\M}{{\mathcal{M}}}
\newcommand{\p}{{\partial}}
\renewcommand{\d}{\mathsf{d}}
\newcommand{\R}{{\mathbb{R}}}
\newcommand{\N}{{\mathbb{N}}}
\newcommand{\Z}{{\mathbb{Z}}}
\newcommand{\Ss}{{\mathbb{S}}}
\newcommand{\T}{{\mathbb{T}}}
\newcommand{\g}{{\mathsf{g}}}
\newcommand{\Sc}{{\mathcal{S}}}
\renewcommand{\M}{{\mathbb{M}}}
\newcommand{\I}{\mathbb{I}}
\renewcommand{\k}{\mathsf{k}}
\newcommand{\ga}{\gamma}
\newcommand{\nab}{\nabla}
\newcommand{\tl}{\tilde}
\newcommand{\ph}{\phantom{=}}
\newcommand{\nn}{\nonumber}
\newcommand{\ux}{X}
\newcommand{\ep}{\epsilon}
\newcommand{\vep}{\varepsilon}
\newcommand{\al}{\alpha}
\newcommand{\be}{\beta}
\newcommand{\ka}{\kappa}
\newcommand{\la}{\lambda}
\newcommand{\indic}{\mathbf{1}}
\newcommand{\f}{\mathsf{f}}
\newcommand{\W}{{\mathbf{W}}}
\newcommand{\E}{{\mathbb{E}}}
\newcommand{\wh}{\widehat}
\newcommand{\Dm}{|\nabla|}
\newcommand{\Rs}{\mathrm{Riesz}}
\newcommand{\as}{\mathsf{a}}
\newcommand{\rs}{\mathsf{r}}
\newcommand{\cd}{\mathsf{c}_{\mathsf{d},\mathsf{s}}}
\newcommand{\s}{\mathsf{s}}
\renewcommand{\k}{\mathsf{k}}
\newcommand{\zg}{|z|^\ga}
\newcommand{\Fr}{\mathsf{F}}
\renewcommand{\P}{\mathcal{P}}
\let\div\relax
\DeclareMathOperator{\div}{\mathrm{div}}
\def\XXint#1#2#3{{\setbox0=\hbox{$#1{#2#3}{\int}$ }
\vcenter{\hbox{$#2#3$ }}\kern-.6\wd0}}
\let\oldtocsection=\tocsection
\let\oldtocsubsection=\tocsubsection
\let\oldtocsubsubsection=\tocsubsubsection
\renewcommand{\tocsection}[2]{\hspace{0em}\oldtocsection{#1}{#2}}
\renewcommand{\tocsubsection}[2]{\hspace{1em}\oldtocsubsection{#1}{#2}}
\renewcommand{\tocsubsubsection}[2]{\hspace{2em}\oldtocsubsubsection{#1}{#2}}
\title[A sharp commutator estimate for all Riesz modulated energies]{A sharp commutator estimate for all Riesz modulated energies}
\author[E. Hess-Childs]{Elias Hess-Childs}
\address{Elias Hess-Childs, Carnegie Mellon University, Department of Mathematical Sciences, Pittsburgh, PA}
\email{ehesschi@andrew.cmu.edu}
\thanks{E.H.C was supported by the NSF grant DMS-2342349 and by NSF grant DMS-2424139, while the author was in residence at the Simons Laufer Mathematical Sciences Institute in Berkeley, California, during the Fall 2025 semester.}
\author[M. Rosenzweig]{Matthew Rosenzweig}
\address{Matthew Rosenzweig, Carnegie Mellon University, Department of Mathematical Sciences, Pittsburgh, PA} 
\email{mrosenz2@andrew.cmu.edu}
\thanks{M.R. was supported by NSF grants DMS-2441170, DMS-2345533, DMS-2342349.}
\author[S. Serfaty]{Sylvia Serfaty}
\address{Sylvia Serfaty,  Sorbonne Universite, CNRS, Universit\'e de Paris,
 Laboratoire Jacques-Louis Lions (LJLL), F-75005 Paris \\ \& Institut Universitaire de France \\ \&
 Courant Institute of Mathematical Sciences, New York University}
\email{serfaty@cims.nyu.edu}
\thanks{S.S. was supported by NSF grant DMS-2247846 and by the Simons Foundation through the Simons Investigator program.}
\begin{document}
\begin{abstract}
We prove a functional inequality in any dimension controlling the derivative along a transport of the Riesz modulated energy in terms of the modulated energy itself. This modulated energy was introduced by the third author and collaborators in the study of mean-field limits and statistical mechanics of Coulomb/Riesz gases, where this control is an essential ingredient. Previous work of the last two authors and Q.H. Nguyen \cite{NRS2021} showed a similar functional inequality but with an additive $N$-dependent error (where $N$ is the number of particles, $\d$ the dimension, and $\s$ the inverse power of the Riesz potential) which was not sharp. In this paper, we obtain the optimal $N^{\frac{\s}{\d}-1}$  error, for all cases, including the sub-Coulomb case.  Our method is  conceptually simple and, like previous work, relies on the observation that the derivative along a transport of the modulated energy is the quadratic form of a commutator. Through a new potential truncation scheme based on a wavelet-type representation of the Riesz potential to handle its singularity, the proof reduces to averaging over a family of Kato-Ponce type estimates. 

The  commutator estimate has applications to sharp rates of convergence for mean-field limits, quasi-neutral limits, and central limit theorems for the fluctuations of Coulomb/Riesz gases both at and out of thermal equilibrium. In particular, we show here for $\s<\d-2$ the expected  $N^{\frac{\s}{\d}-1}$-rate in the modulated energy distance for the  mean-field convergence of first-order Hamiltonian and gradient flows. This complements the recent work \cite{RS2022} on the optimal rate for the (super-)Coulomb case $\d-2\le \s<\d$  and therefore  resolves the entire potential Riesz case.  

\end{abstract}
\maketitle

\section{Introduction}\label{sec:intro}
\subsection{Motivation}\label{ssec:introMot}
In any dimension $\d$,  consider the class of  \emph{Riesz} interactions 
\begin{equation}\label{eq:gmod}
\g(x)= \begin{cases}  \frac{1}{\s} |x|^{-\s}, \quad  & \s \neq 0\\
-\log |x|, \quad & \s=0,
\end{cases}
\end{equation}
with the assumption that  $0\le \s<\d$. Up to a normalizing constant $\cd$, these interactions are characterized as fundamental solutions of the fractional Laplacian: $(-\Delta)^{\frac{\d-\s}{2}}\g = \cd \delta_0$. The particular value $\s=\d-2$  corresponds to the classical \emph{Coulomb} interaction from physics. We are primarily interested in the \emph{sub-Coulomb} case  $\s < \d-2$, though our main result is also valid for the Coulomb/super-Coulomb case $\s\ge \d-2$. The restriction to $\s\ge 0$ is to only consider the more challenging singular case (see \cref{rem:nonsing}), while the restriction to the potential regime $\s<\d$ is to exclude the \emph{hypersingular} case, which is not of the mean-field type considered in this paper (e.g. see \cite{HSST2020}).

Given a system of $N$ distinct points $\ux_N = (x_1, \dots, x_N)\in (\R^\d)^N$  with interaction energy  
\begin{equation}
\sum_{1\le i\neq j\le N} \g(x_i-x_j),
\end{equation}
one is led to comparing the sequence of \emph{empirical measures} $\mu_N \coloneqq \frac1N \sum_{i=1}^N \delta_{x_i}$ to a \emph{mean-field} density $\mu$. This comparison is conveniently performed by considering a {\it modulated energy}, or Riesz (squared) ``distance" between $\mu_N$ and $\mu$, defined by 
\begin{equation}\label{eq:modenergy}
\Fr_N(\ux_N, \mu) \coloneqq \frac12\int_{(\R^\d)^2 \setminus \triangle} \g(x-y) d\Big(\frac{1}{N} \sum_{i=1}^N \delta_{x_i} - \mu\Big)(x) d\Big(\frac{1}{N} \sum_{i=1}^N \delta_{x_i} - \mu\Big)(y),
\end{equation}
where we excise the diagonal $\triangle\coloneqq \{(x,y)\in(\R^\d)^2:x=y\}$ from the domain of integration in order to remove the infinite self-interaction of each particle.  This object originated in the study of the statistical mechanics of Coulomb/Riesz gases \cite{SS2015,SS2015log,RS2016, PS2017} and later was used in the derivation of mean-field dynamics \cite{Duerinckx2016,Serfaty2020,NRS2021} and following works. We refer to  \cite[Chapter 4]{SerfatyLN} for a comprehensive discussion of the modulated energy.
 
An essential point is to control derivatives of $\Fr_N$ along  a transport $v:\R^\d\rightarrow\R^\d$, i.e. the quantities
\begin{equation}\label{15}
 \frac{d^n}{dt^n}\Big|_{t=0} \Fr_N( (\I + tv)^{\oplus N} (\ux_N), (\I + tv)\# \mu)= {\frac12}\int_{(\R^\d)^2\setminus \triangle} 
\nabla^{\otimes n} \g(x-y):  (v(x)-v(y))^{\otimes n}  d ( \mu_N- \mu)^{\otimes 2} (x,y),
\end{equation}
where $\I:\R^\d\rightarrow\R^\d$ is the identity, $(\I+t v)^{\oplus N} (\ux_N) \coloneqq (x_1 + tv(x_1), \ldots, x_N+ tv(x_N))$, and $:$ denotes the inner product between the tensors. 
For applications to the mean-field limit, $v$ is the velocity field of the limiting evolution as $N \to \infty$. For applications to central limit theorems (CLTs) for the fluctuations (the next-order description), $v$ is the gradient of a test function evolved along the adjoint linearized mean-field flow \cite{HRSclt, CN2025}, and these inequalities are at the core of the ``transport'' approach to fluctuations of canonical Gibbs ensembles \cite{LS2018,BLS2018,Serfaty2023, PS2024}. We also mention that the quantity in \eqref{15} is the same as appears in the loop (Dyson--Schwinger) equations in the random matrix theory literature, e.g. \cite{BG2013, BG2024, BBNY2019}, which are another avatar of the transport method.
 

Our main objective in this paper is to establish a functional inequality asserting that the quantity in \eqref{15} is always bounded by \begin{equation*}
C_1(\Fr_N(\ux_N, \mu) + C_2N^{-\alpha}), \quad \alpha >0, \end{equation*}  where $C_1>0$ is a constant depending on $\d,\s,v$ and $C_2>0$ depends on $\d,\s,\mu$. For $n=1$, this was first proved  in \cite{LS2018} in the 2D Coulomb case, then generalized to the super-Coulomb case in \cite{Serfaty2020}. The proof relied on identifying  a {\it stress-energy tensor} in \eqref{15} and using integration by parts. Subsequently, the second author \cite{Rosenzweig2020spv} observed that the expressions \eqref{15}  may also be viewed as the quadratic form of a \textit{commutator}, akin to the famous Calder\'{o}n commutator \cite{Calderon1980}. This perspective was then used by the last two authors and Q.H. Nguyen \cite{NRS2021} to show the desired functional inequalities for all values of $\s$ and any order\footnote{Strictly speaking, the paper only explicitly considers the case $n=2$, but the argument extends (with more algebra) to any $n\ge 2$.} $n\ge 2$ as well as a broader class of $\g$'s that may be regarded as perturbations of Riesz interactions, including Lennard-Jones type potentials. Although these stress-energy and commutator perspectives may seem distinct, they are in fact dual to each other \cite{RS2022}. Not only are these functional inequalities crucial for proving CLTs for the fluctuations of Riesz gases \cite{LS2018, Serfaty2023, PS2024}, and even more so for deriving mean-field limits \cite{Serfaty2020,Rosenzweig2022,Rosenzweig2022a,NRS2021,CdCRS2023,RS2022, bPCJ2025} and large deviation principles \cite{hC2023}. They have been further used to show joint classical and mean-field limits \cite{GP2021} and supercritical mean-field limits of classical \cite{HkI2021, Rosenzweig2021ne, Menard2022, RSlake} and quantum systems of particles \cite{Rosenzweig2021qe, Porat2022}.

Once such inequalities are established, a natural next problem is to determine the optimal size of the additive error, i.e.~the exponent $\alpha$, which a priori depends on $\d$ and $\s$. The appropriate comparison is with the minimal size of $\Fr_N(\ux_N,\mu)$. In the (super-)Coulomb case, it is known that $\Fr(\ux_N,\mu) + \frac{\log(N\|\mu\|_{L^\infty})}{2\d N}\indic_{\s=0}\ge -C\|\mu\|_{L^\infty}^{\frac\s\d}N^{\frac{\s}{\d}-1}$ and this lower bound is sharp.\footnote{The term $\frac{\log(N\|\mu\|_{L^\infty})}{2\d N}$ in the $\s=0$ case should \emph{not} be viewed as an additive error like $\|\mu\|_{L^\infty}^{\frac\s\d}N^{\frac{\s}{\d}-1}$, but rather a consequence of the fact that the right quantity to consider is $\Fr(\ux_N,\mu) + \frac{\log(N\|\mu\|_{L^\infty})}{2\d N}\indic_{\s=0}$.} For instance, see \cite[Sections 4.2, 12.4]{SerfatyLN}. We show in this work that the same lower bound holds in the sub-Coulomb case as well (see \cref{rem:MElb}), which was not previously known. Moreover, this lower bound is in general sharp \cite{HSSS2017}. In controlling the magnitude of \eqref{15}, one needs a right-hand side which is also nonnegative. Thus, the best error one may hope for is of size $N^{\frac{\s}{\d}-1}$. 

To date, this sharp $N^{\frac{\s}{\d}-1}$ error rate has only been proven in the (super-)Coulomb case: first, the Coulomb case \cite{LS2018,Serfaty2023, Rosenzweig2021ne} up to second order (i.e. $n\le 2$ in \eqref{15}), and then recently, the entire (super-)Coulomb case at any order \cite{RS2022}. In the present work, we prove a functional inequality with the sharp error for the sub-Coulomb case at first order. In fact, we prove a sharp functional inequality for all Riesz cases $0\le \s <\d$. See the remarks at the end of this subsection for a comparison with the (super-)Coulomb inequality of \cite{RS2022}.

In addition to its immediate application to the mean-field limit (see \cref{ssec:introapp}), this sharp functional inequality is crucial for studying fluctuations around the mean-field limit in and out of thermal equilibrium. We use this functional inequality in the forthcoming work \cite{HRSclt} by J. Huang and the last two authors on quantitative CLTs for the fluctuations of Riesz flows in varying temperature regimes. This inequality also may be incorporated into the transport and Stein's method approaches to CLTs for fluctuations of canonical Gibbs ensembles to establish new results \cite{RSstein}.

\subsection{New functional inequality}
To state our main results, we define the microscopic length scale 
\begin{equation}\label{deflambda}
\lambda \coloneqq (N\|\mu\|_{L^\infty})^{-\frac{1}{\d}},
\end{equation}
which one may regard as the typical inter-particle distance. At the cost of  letting all constants depend on $\|\mu\|_{L^\infty}$,  one may simply take $\lambda = N^{-1/\d}$ throughout the paper, if one prefers not to track the dependence of $\|\mu\|_{L^\infty}$.


\begin{thm}\label{thm:FI}
Let $\d\ge 1$, $0\le \s<\d$, and $\as \in (\d,\d+2)$. There exists a constant $C>0$ depending only on $\as,\d,\s$ such that the following holds. Let $\mu \in L^1(\R^\d)\cap L^\infty(\R^\d)$ with $\int_{\R^\d}\mu=1$.  Let  $v:\R^\d\rightarrow\R^\d$ be  a  Lipschitz vector field. For any pairwise distinct configuration $\ux_N \in (\R^\d)^N$, it holds that
\begin{multline}\label{main1}
\Bigg|\int_{(\R^\d)^2\setminus\triangle} (v(x)-v(y))\cdot\nabla\g(x-y) d\Big(\frac1N\sum_{i=1}^N\delta_{x_i}-\mu\Big)^{\otimes 2}(x,y)\bigg| \\
\leq C\Big(\|\nabla v\|+\||\nabla|^{\frac{\as}{2}}v\|_{L^{\frac{2\d}{\as-2}}}\indic_{\as>2}\Big)\Big(\Fr_N(\ux_N,\mu) - \frac{\log\la}{2N}\indic_{\s=0} +C\|\mu\|_{L^\infty}\la^{\d-\s}\Big),
\end{multline}
where $\Dm \coloneqq (-\Delta)^{1/2}$.
\end{thm}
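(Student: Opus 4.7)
The plan is to exploit the commutator structure of the left-hand side of \eqref{main1}. Writing $f:=\mu_N-\mu$, the LHS equals (up to the diagonal)
\[
\langle[v\cdot,\nabla\g\ast]f,f\rangle,\qquad[v\cdot,\nabla\g\ast]g(x)=\int\nabla\g(x-y)(v(x)-v(y))g(y)\,dy,
\]
so the goal becomes to bound this symmetric commutator quadratic form by the modulated energy plus an error of size $\|\mu\|_{L^\infty}\la^{\d-\s}\sim N^{\s/\d-1}$. To handle the singularity of $\nabla\g$ at the origin, I would introduce a wavelet-type representation of $\g$,
\[
\g(x)=\int_0^\infty\varphi_r(x)\,\frac{dr}{r},\qquad\varphi_r(x)=r^{-\s}\varphi(x/r),
\]
where $\varphi$ is a fixed smooth radial profile normalized by a Mellin-type identity (a Littlewood--Paley annular partition would work equally well), and split the scale integral at the inter-particle scale $r_\ast\sim\la$.

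For the small-scale part $r<r_\ast$, the piece $\int_0^{r_\ast}\nabla\varphi_r\,dr/r$ captures the singular behavior of $\nabla\g$ near zero and is supported near the diagonal at scale $\lesssim r_\ast$. I would apply the Lipschitz inequality $|v(x)-v(y)|\le\|\nabla v\|_{L^\infty}|x-y|$ to extract the extra factor of $|x-y|$ against $\nabla\varphi_r$, reducing the contribution to a bilinear form involving a truncated Riesz kernel that can be controlled by the standard modulated-energy toolkit (smearing of Dirac masses at scale $\la$, pointwise truncation lemmas in the spirit of \cite[Ch.~4]{SerfatyLN}). The scale $r_\ast=\la$ is chosen precisely so that the divergent self-interaction at scales below $\la$ contributes at most $C\|\mu\|_{L^\infty}\la^{\d-\s}$; in the logarithmic case $\s=0$ the $-\log\la/(2N)$ shift on the right absorbs the corresponding logarithmic divergence.

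For the large-scale part $r>r_\ast$, each $\varphi_r$ is smooth with characteristic scale $r$, and the commutator $[v\cdot,\varphi_r\ast]$ admits a Kato--Ponce type estimate of the schematic form
\[
\bigl|\langle[v\cdot,\varphi_r\ast]f,f\rangle\bigr|\lesssim\bigl(\|\nabla v\|_{L^\infty}+\||\nabla|^{\as/2}v\|_{L^{2\d/(\as-2)}}\indic_{\as>2}\bigr)\,E_r(f),
\]
where $E_r(f)$ is the scale-$r$ contribution to the Riesz energy of $f$. The two norms on $v$ are both Lipschitz-scaling, matched by the critical Sobolev embedding $\dot W^{\as/2,2\d/(\as-2)}\hookrightarrow\dot C^{0,1}$ (which holds precisely on the stated range $\as\in(\d,\d+2)$), and the parameter $\as$ provides the flexibility in the family of fractional-Leibniz inequalities that must be combined. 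Integrating the resulting scale-by-scale bound against $dr/r$ over $r>r_\ast$---the ``averaging over a family of Kato--Ponce estimates'' mentioned in the abstract---reassembles the pieces into $\Fr_N(\ux_N,\mu)+O(\|\mu\|_{L^\infty}\la^{\d-\s})$ by the wavelet identity, yielding the desired bound.

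The principal obstacle is expected to lie in the small-scale regime: previous approaches such as \cite{NRS2021} already produced a commutator bound with an $N$-dependent error, but with a non-sharp exponent, because the small-scale truncation was not matched optimally to the self-interaction correction hidden in $\Fr_N$. Achieving the sharp $N^{\s/\d-1}$ rate requires the wavelet decomposition to be synchronized scale by scale with the modulated-energy truncation so that no lower-order divergence leaks through. A secondary technical difficulty is ensuring that the Kato--Ponce estimates at scale $r$ come with constants uniform in $r$ (in the correct dimensional normalization) so that the $dr/r$-averaging is integrable; this is delicate near the endpoint $\as\to\d^+$, and is presumably what forces the restriction $\as<\d+2$ at the other end.
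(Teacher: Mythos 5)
Your outline reproduces the skeleton of the argument (scaling representation of $\g$, truncation of the scale integral at $\eta\sim\la$, Lipschitz bound for the small scales, averaged commutator estimates for the large scales), but the two steps that actually carry the proof are missing or wrong. First, the asserted scale-by-scale bound $|\langle[v\cdot,\varphi_r\ast]f,f\rangle|\lesssim C_v\,E_r(f)$ is not available for a generic smooth radial profile, and in particular \emph{not} for a Littlewood--Paley annular partition: if $\hat\varphi$ decays super-polynomially (Gaussian, or compactly supported as in Littlewood--Paley), an elementary Fourier computation shows no estimate of the form $\int (v(x)-v(y))\cdot\nabla\varphi(x-y)f(x)f(y)\le C_v\int\varphi(x-y)f(x)f(y)$ can hold. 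The whole point is to take $\varphi=G_\as$, the Bessel kernel with $\jp{2\pi\xi}^{-\as}$ as symbol, so that setting $h=G_\as\ast f$ one has $\jp{\nab}^{\as}h=f$ and the scale-$r$ estimate dualizes into a genuine Kato--Ponce product/commutator inequality for $\jp{\nab}^{\as/2}$ (proved via an inhomogeneous Caffarelli--Silvestre extension and a stress-tensor identity, then induction in $\as$). Relatedly, your justification of the window $\as\in(\d,\d+2)$ is incorrect: the critical Sobolev space $\dot W^{\as/2,2\d/(\as-2)}$ does \emph{not} embed into $\dot C^{0,1}$ (the endpoint embedding fails; both norms merely share the Lipschitz scaling), and the constraints actually come from requiring $G_\as$ bounded and continuous ($\as>\d$, so the truncated kernel is finite at the origin with $\g_\eta(0)\sim\eta^{-\s}$) and from requiring the rescaled Kato--Ponce constants $A_{v_t,\as}$ to scale linearly in $t$ so the $dt/t$ averaging closes ($\as<\d+2$); there is no delicacy at $\as\to\d^+$ of the kind you describe.

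Second, your small-scale treatment, ``smearing of Dirac masses at scale $\la$ plus the standard truncation lemmas,'' is precisely the mechanism that produced the non-sharp error in \cite{NRS2021}: a standard mollification-type truncation only gives the tail bound $\f_\eta(x)\lesssim\eta^2|x|^{-\s-2}$, and the slowly decaying cross term $\f_\eta\ast\mu$ then ruins the $N^{\frac\s\d-1}$ rate. The sharp error requires the new truncation's rapid decay, $\f_\eta(x)\lesssim_\ga \eta^{\ga-\s}|x|^{-\ga}$ for arbitrarily large $\ga$ (automatic for $\varphi=G_\as$), which yields $\|\f_\eta\ast\mu\|_{L^\infty}\lesssim\|\mu\|_{L^\infty}\eta^{\d-\s}$ and, via $|x||\nabla\f_\eta(x)|\lesssim\f_{\eta/c}(x)$, lets the particle--particle contribution $\frac1{N^2}\sum_{i\ne j}\f_{\eta/c}(x_i-x_j)$ be absorbed by the modulated energy with only an $O(\|\mu\|_{L^\infty}\la^{\d-\s})$ additive cost; no smearing of the Diracs is needed (one also has to note that $(v(x)-v(y))\cdot\nabla\g_\eta(x-y)$ extends H\"older-continuously by zero across the diagonal, which is what licenses reinserting the diagonal in the truncated part). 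Without these two ingredients --- the Bessel/duality mechanism behind the scale-$r$ commutator bound and the fast tail decay of $\g-\g_\eta$ --- your plan either has an unproved key estimate or reverts to the known suboptimal error.
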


We note that the additive error term  $C\|\mu\|_{L^\infty}\lambda^{\d-\s} \propto N^{-1+\frac{\s}{\d}}$,  which is the announced optimal estimate. The dependence $\|\mu\|_{L^\infty}$ may be weakened to $\|\mu\|_{L^p}$ for $p>\frac{\d}{\d-\s}$, but at the cost of increasing the size of the additive error term. We defer comments on the regularity assumptions for $v$ until \cref{ssec:introRQ}. 

\begin{remark}
Here, optimality is understood in the ``worst case'' sense, i.e. that there exists a point configuration $\ux_N$ that achieves the bound. This should be contrasted with the ``average'' sense, where $\ux_N\sim f_N \in \mathcal{P}((\R^\d)^N)$ and one estimates $\E_{\ux_N\sim f_N}[|\eqref{15}|]$. From the perspective of statistical mechanics, the worst case corresponds to zero temperature, while the average case encompasses a range of temperatures, with the infinite-temperature limit corresponding to iid points. It is well-known that averaging may produce smaller errors. One quickly checks that if $f_N \sim \mu^{\otimes N}$ (i.e. $\mu$-iid), then $\E_{\ux_N\sim f_N}[|\eqref{15}|]\approx N^{-1}$. Moreover, it is known that for bounded interaction potentials, there are \emph{high-temperature/entropic} commutator estimates in which $\E_{\ux_N\sim f_N}[|\eqref{15}|]$ is controlled by a right-hand side consisting of the normalized relative entropy and an $O(N^{-1})$ additive error \cite{JW2018, LLN2020}; and in fact, versions of such estimates with a the $O(N^{-1})$ error also hold for some Riesz cases \cite{DGR}. 
\end{remark}

In fact, we will obtain \cref{thm:FI} as a special case of \cref{thm:FI'}, which establishes an analogous functional inequality valid for a larger class of ``Riesz-type'' potentials introduced in \cref{ssec:MEpt'} (cf. \cite{NRS2021} for a different class of Riesz-type potentials).

Comparing \cref{thm:FI} to \cite[Proposition 4.1]{NRS2021} or \cite[Proposition 5.15]{RS2021}, one sees that the size of the additive error has been improved to $N^{\frac{\s}{\d}-1} $ while  the previous transport regularity $\|\nab v\|_{L^{\infty}} + \|\Dm^{\frac{\d-\s}{2}}v\|_{L^{\frac{2\d}{\d-\s-2}}}\indic_{\s <\d-2}$ has been replaced by $\|\nab v\|_{L^{\infty}} + \|\Dm^{\frac{\as}{2}}v\|_{L^{\frac{2\d}{\as-2}}}$, for $\as \in (\d,\d+2)$, which, by Sobolev embedding, is stronger.  See \cref{ssec:introRQ} for further comments on this point. On the other hand, comparing \cref{thm:FI} to \cite[Theorem 1.1, (1.7)]{RS2022}, which only depends on $\|\nab v\|_{L^\infty}$, it is evident that the estimate we obtain here is worse in the (super-)Coulomb case. Though, we feel the new proof presented in this paper is more elementary than that of \cite{RS2022}.

\begin{remark}
Although we only consider the whole space $\R^\d$ in this paper, the inequality (and its proof) hold mutatis mutandis on the flat torus $\T^\d$ where $\g$ is now taken to be the solution of $\Dm^{\d-\s}\g = \cd(\delta_0-1)$. We leave the details of this extension to the interested reader. 
\end{remark}

\begin{remark}\label{rem:nonsing}
As mentioned at the very beginning of \cref{ssec:introMot}, we restrict to $0\le \s<\d$ to avoid the nonsingular regime $-2<\s<0$.\footnote{The lower bound $\s>-2$ is natural because for $\s\le -2$, the Riesz potential is no longer positive definite on the class of zero mean test functions.} Nonsingular Riesz interactions are also of interest. For $\d=1$, the value $\s=-1$ is Coulomb. More importantly, the modulated energy \eqref{eq:modenergy} with the diagonal reinserted  (the contribution of the diagonal is, in fact, zero) coincides with the square of what is known in the statistics literature as \emph{maximum mean discrepancy (MMD)} \cite{GBRSS2006, GBRSS2007, GBRSS2012}, a type of integral probability metric \cite{Muller1997}. MMDs for varying choices of kernels are widely used in statistics and machine learning contexts as distances between probability measures. They have  advantages over Wasserstein metrics in terms of the curse of dimensionality \cite{MD2024}, and Riesz MMDs are attractive computationally on account of their slicing property \cite{KNSS2022, HWAH2023}.

In the nonsingular case, the inequality \eqref{main1} holds with the diagonal reinserted on the left-hand side and without the $-\frac{\log\la}{2N}\indic_{\s=0} + C\|\mu\|_{L^\infty}\la^{\d-\s}$ on the right-hand side. This estimate, which is sharp, is shown in the forthcoming work \cite{RSW2025} by D. Slep{\v{c}ev}, L. Wang, and the second author. Because the interaction is nonsingular, there is no ``renormalization'' step, hence no additive error; and so the proof reduces to showing a genuine commutator estimate, which follows from the approach of \cite{NRS2021}. Although the techniques used in this paper can also prove such a commutator estimate, they require more transport regularity, and we therefore leave the result to  \cite{RSW2025}.
\end{remark}

\subsection{Proof method}\label{ssec:intropf}
Our method of proof has two main ingredients: (1) a new potential truncation scheme based on an integral representation of the Riesz potential and  (2) commutator estimates of Kato-Ponce type.

The starting point for the first ingredient is the identity
\begin{align}\label{eq:introgintrep}
\forall x\ne 0,\qquad \g(x) = \mathsf{c}_{\phi,\d,\s}\int_0^\infty t^{\d-\s}\phi_t(x)\frac{dt}{t},
\end{align}
where $\phi$ is any sufficiently nice radial function with nonnegative Fourier transform $\hat{\phi}\ge 0$, $\phi_t(x)\coloneqq t^{-\d}\phi(x/t)$, and $c_{\phi,\d,\s}$ depends on $\phi,\d,\s$. In the log case $\s=0$, the integral has to be renormalized at large scales to obtain a convergent expression due to the fact that $\log|x|$ does not decay at infinity. See \cref{lem:gintrep} for precise assumptions ensuring the validity of the identity. The formula, which is a consequence of scaling, expresses that the Riesz potential is a weighted average of the family of approximate identities $\{\phi_t\}_{t>0}$. In \cite[Chapter 6]{Rubin2024}, Rubin refers to this as a ``wavelet type representation.'' The identity \eqref{eq:introgintrep} may also be understood in terms of the scaling/inversion properties of Mellin transforms.\footnote{If $\tl{f}(r) \coloneqq f(r^{-1})$, then $\mathcal{M}(\tl{f})(z) = \mathcal{M}(f)(-z)$; and if for $\la>0$, $f_\la(r) \coloneqq f(\la r)$, then $\mathcal{M}(f_\la(r))(z) = \la^{-z}\mathcal{M}(f)(z)$ \cite[Appendix B]{FS2009}.} Letting $\mathcal{M}(f)(z)$ denote the Mellin transform of a function $f$ and setting $\psi_{|x|}(t)\coloneqq \phi(|x|/t)$, we have the relation
\begin{align}
\int_0^\infty t^{\d-\s}\phi_t(x)\frac{dt}{t} = \int_0^\infty t^{-\s}\phi(|x|/t)\frac{dt}{t} = \mathcal{M}(\psi_{|x|})(-\s) = |x|^{-\s}\mathcal{M}(\phi)(\s).
\end{align}

The utility of the representation \eqref{eq:introgintrep} is that it conveniently allows us to truncate the singularity of the Riesz potential simply by cutting off the integral for small $t$. More precisely, given a scale $\eta>0$, we define the \emph{truncated potential}
\begin{align}\label{eq:introgeta}
\g_\eta(x) \coloneqq  \mathsf{c}_{\phi,\d,\s}\int_\eta^\infty t^{\d-\s}\phi_t(x)\frac{dt}{t}.
\end{align}
The truncated potential has several important properties that are crucial to the analysis of this paper:
\begin{enumerate}
\item While $\g$ is singular at the origin, $\g_\eta$ is finite and bounded by $\g(\eta)$.
\item $\g_\eta$ is a positive definite kernel (i.e. repulsive, or with nonnegative Fourier transform).
\item $\g_\eta \le \g $ and the difference $\g-\g_\eta$ decays rapidly at scale $\eta$ outside the ball $B(0,\eta)$.
\end{enumerate}
We refer to \cref{lem:geta} for all of the properties of $\g_\eta$.

Without using the representation \eqref{eq:introgintrep}, it is not easy to construct a truncation $\g_\eta$ satisfying all of the three requirements at once, and  
a standard mollification  procedure fails to do so. Moreover, the choice of $\phi$ will require care.
Thanks to these properties,  splitting $\Fr_N(\ux_N,\mu)$ into 
\begin{multline}
\Fr_N(\ux_N, \mu)= \frac12 \int_{(\R^\d)^2}\g_\eta(x-y)d\Big(\frac1N\sum_{i=1}^N \delta_{x_i}-\mu\Big)^{\otimes 2}\\
+ \frac12 \int_{(\R^\d)^2\backslash \triangle }(\g-\g_\eta)(x-y)d\Big(\frac1N\sum_{i=1}^N \delta_{x_i}-\mu\Big)^{\otimes 2} - \frac{\g_\eta(0)}{2N},
\end{multline}
where the diagonal can be reinserted in the first integral up to the well-controlled last term on the second line, we easily obtain two sets of controls by $\Fr_N$. The first is on the first term on the right-hand side, which is nonnegative (thanks to the repulsive nature of $\g_\eta$) and is the square of a genuine maximum mean discrepancy (or metric measuring the distance between $\frac1N\sum_{i=1}^N\delta_{x_i}$ and $\mu$). The second is a control on the second term on the right-hand side which, up to well-controlled additive error terms, is also nonnegative and, thanks to properties of $\g_\eta$, controls $\frac{1}{N^2} \sum_{i\neq j} \g(x_i-x_j)$, i.e.,~the interaction energy coming from particle interactions at small scales.  We refer to \cref{prop:MEmon} for the details.
The former control allows to show in  \cref{prop:coer} a new coercivity estimate asserting that the modulated energy controls the squared inhomogeneous Sobolev norm $\|\frac1N\sum_{i=1}^N\delta_{x_i}-\mu\|_{H^{-\frac{\d}{2}-\varepsilon}}^2$, for any $\varepsilon>0$, up to $O(\la^{\d-\s})$ error. Choosing $\eta \propto N^{-1/\d}$ (the microscale), the latter control allows to bound in terms of the modulated energy the interaction energy due to nearest neighbors at the microscale, a fortiori bounding the number of points with nearest-neighbor distance below the microscale, see \cref{cor:MEcount}.


Let us mention that the potential truncation introduced here plays an essential role in our forthcoming work \cite{HcRS2024oq} on optimal quantization via MMDs with Riesz, and more generally kernels having power law-type spectra. Even though the Riesz kernel is nonsingular in this setting, the truncation still allows to control the nearest-neighbor distances for points in terms of the modulated energy. Such control is crucial for showing the lower bound $\min_{\ux_N\in (\R^\d)^N} \Fr_N(\ux_N,\mu) \ge CN^{\frac{\s}{\d}-1}$,\footnote{Note here that the lower bound is now nonnegative compared to the singular setting $\s\ge 0$. This is because there is no longer a self-interaction removed.} for which we obtain a matching upper bound.

We emphasize that in contrast to the previous works \cite{PS2017, Serfaty2020, NRS2021, RS2022}, there is no need to consider a smearing/regularizing of the charges $\delta_{x_i}$, nor to use smearing radii that are  point-dependent and configuration-dependent. Throughout the present paper, we will only need to consider the truncated potential $\g_\eta$.
In these previous works, the strategy behind an estimate of the form \eqref{main1} is to first prove an estimate valid when the diagonal is reinserted and $\frac{1}{N}\sum_{i=1}^N\delta_{x_i}-\mu$ is replaced by a more regular distribution $f$. One then reduces to this regular setting by replacing the point charges $\delta_{x_i}$ by smeared charges $\delta_{x_i}^{(\eta_i)}$ and estimating the error from this replacement, the so-called renormalization step. Important to implementing the renormalization is to show that the modulated energy controls both the potential energy of the difference $\frac1N\sum_{i=1}^N\delta_{x_i}^{(\eta_i)}-\mu$ and the small-scale interactions.

The present work differs crucially in two regards. First, rather than regularize $\frac{1}{N}\sum_{i=1}^N\delta_{x_i}-\mu$ through smearing, we regularize $\g$ through the truncation $\g_\eta$.\footnote{Remark that the work \cite{BJW2019edp} also considered a scheme based on regularizing the interaction $\g$ at small length scales. However, the procedure in that work is completely different than our own, limited to the spatially periodic setting, and does not yield optimal error estimates.} Second, to prove an estimate for  $\g_\eta$ (see \cref{prop:comm}), we use the definition \eqref{eq:introgeta} to reduce to proving estimates for
\begin{align}
\int_{(\R^\d)^2} (v(x)-v(y))\cdot \nabla \phi\Big(\frac{x-y}{t}\Big)d\Big(\frac{1}{N}\sum_{i=1}^N\delta_{x_i}-\mu\Big)^{\otimes 2}(x,y) , \qquad t \in (0,\infty),
\end{align}
which we integrate over $(\eta,\infty)$ with respect to the measure $t^{\d-\s}\frac{dt}{t}$. Provided the dependence on $v$ in our estimate is scale-invariant, this reduces to proving an estimate for $t=1$. 

We now come to the second main ingredient of our proof. The identity \eqref{eq:introgintrep} is valid for a large class of scaling functions $\phi$, but not all choices of $\phi$ will allow us to prove the desired estimate. Indeed, it is a elementary Fourier computation that no estimate of the form
\begin{align}
\int_{(\R^\d)^2}(v(x)-v(y))\cdot\nab\phi(x-y)f(x)f(y) \le C_v \int_{(\R^\d)^2}\phi(x-y)f(x)f(y)
\end{align}
can hold when $\hat\phi$ decays super-polynomially (e.g. $\phi$ Gaussian). It turns out that  a good choice for $\phi$ is a Bessel potential, i.e. $\hat\phi(\xi) = \jp{2\pi\xi}^{-\as}$, which is the kernel of the inhomogeneous Fourier multiplier $\jp{\nab}^{\as}$, where we use the Japanese bracket notation $\jp{x}^{a} \coloneqq (1+|x|^2)^{a/2}$ for $a\in\R$ and $x\in\R^\d$. The Bessel potential is a screening of the Riesz potential which preserves its local behavior at the origin but avoids the issues at low frequency that lead to the slow spatial decay of the Riesz potential  (see \cref{lem:bespot}). Through duality (i.e. setting $h\coloneqq \phi\ast f$ and using that $\jp{\nab}^{\as}h = f$) the  choice of Bessel potential reduces to proving a product rule for $\jp{\nab}^{\as/2}$. Such product rules are known as Kato-Ponce estimates in the harmonic analysis literature on account of their origins \cite{KP1988}. This is the content of \cref{lem:KP},  our main technical lemma. Here, we rely on commutator estimates of Li \cite{Li2019}, as well as a local representation of fractional powers $\jp{\nab}^{\as/2}$ via dimension extension (see \cref{lem:CSinhomext}), analogous to the representation for the fractional Laplacian popularized by Caffarelli and Silvestre \cite{CS2007}.

The parameter $\as$ for the Bessel potential is a degree of freedom, but not one without constraints. We need $\as>\d$, so that $\phi$ is continuous, bounded. On the other hand, if $\as$ is too large, then we will not be able to obtain an estimate whose dependence on $v$ scales properly (see the proof of \cref{lem:KP}). This leads to the constraint $\as<\d+2$. Even accounting for the constraints on $\as$, it is somewhat remarkable that we obtain a homogeneous estimate in the end through an intermediate inhomogeneous estimate.

Returning to a point alluded to in the previous subsection, let us remark that the identity \eqref{eq:introgintrep} itself suggests a class of potentials beyond the exact Riesz case. Namely, we can replace the function $t^{\d-\s}$ by a general function of $t$ which is pointwise controlled above and below by $t^{\d-\s}$. We call these \emph{Riesz-type} potentials and properly introduce them in \cref{ssec:MEpt'} (see \cref{def:Rtype}).

\subsection{Applications}\label{ssec:introapp}
We now discuss applications of \cref{thm:FI} related to mean-field and supercritical mean-field limits. We will be brief in our remarks, given this subsection closely follows \cite[Section 1.4]{RS2022}, which has further background. 

The first application concerns mean-field limits of first-order dynamics of the form
\begin{equation}\label{eq:MFode}
\dot{x}_i^t = \frac{1}{N}\sum_{1\leq j\leq N : j\neq i} \M\nabla\g(x_i^t-x_j^t) - \mathsf{V}^t(x_i^t) , \qquad i \in [N].
\end{equation}
Here, $\M$ is a $\d\times \d$ constant real matrix such that
\begin{equation}\label{eq:Mnd}
\M\xi\cdot\xi \leq 0 \qquad \forall \xi\in\R^\d,
\end{equation}
 which is a  repulsivity assumption, and $\mathsf{V}^t$ is an external field (e.g. $\mathsf{V}^t=-\nabla V$ for some confining potential $V$). Choosing $\M =-\I$ yields \emph{gradient/dissipative} dynamics, while choosing $\M$ to be antisymmetric yields \emph{Hamiltonian/conservative} dynamics. Mixed flows are also permitted. We assume that $\g$ is of the form \eqref{eq:gmod}. 
One can check that our assumption \eqref{eq:Mnd} for $\M$ ensures that the energy for \eqref{eq:MFode} is $O(1)$ locally uniformly in time if $\mathsf{V}^t$ is $L^1$ in time, Lipschitz in space (in fact, nonincreasing if $\mathsf{V}=0$), therefore if the particles are initially separated, they remain separated for all time, so that there is a unique, global strong solution to the system \eqref{eq:MFode}.


The mean-field limit refers to the convergence as $N \to \infty$ of the {empirical measure}  $\mu_N^t = \frac1N\sum_{i=1}^N \delta_{x_i^t}$
associated to a solution $\ux_N^t = (x_1^t, \dots, x_N^t)$ of the system \eqref{eq:MFode}, 
to a solution $\mu^t$ of the \emph{mean-field equation}
\begin{equation}\label{eq:MFlim}
\partial_t \mu= \div ((\mathsf{V}-\M \nabla \g*\mu) \mu),  \qquad (t,x)\in\R_+\times\R^\d.
\end{equation}
The mean-field limit is qualitatively equivalent to {\it propagation of molecular chaos} (see \cite{Golse2016ln,HM2014} and references therein).  This latter notion means that if $f_N^0(x_1, \dots, x_N) = \mathrm{Law}(X_N^0)$ is $\mu^0$-chaotic (i.e.~the $k$-point marginals $f_{N;k}^0\rightharpoonup (\mu^0)^{\otimes k}$ as $N\rightarrow\infty$ for every fixed $k$), then $f_N^t = \mathrm{Law}(\ux_N^t)$ is $\mu^t$-chaotic. Note that the mean-field limit makes sense for purely deterministic initial data, whereas propagation of chaos is a statement for random initial data. Nevertheless, one can leverage deterministic mean-field convergence rates to prove rates for propagation of chaos, as is possible with our methods (see \cref{rem:pc} below). 

There is a long history to mean-field limits for systems of the form \eqref{eq:MFode}---a proper review of which is beyond the scope of this paper. For a discussion of contributions and the techniques behind them, we refer to the recent survey \cite{CD2021}, the lecture notes \cite{Golse2022ln,Golse2016ln, JW2017_survey, Jabin2014}, and the introductions of \cite{Serfaty2020,NRS2021}. 

In the singular Riesz case, only recently has the full range $\s<\d$ been treated: the sub-Coulomb case $\s<\d-2$, \cite{Hauray2009,CCH2014}, the Coulomb/super-Coulomb case $\d-2\leq \s<\d$ \cite{Duerinckx2016,CFP2012,BO2019,Serfaty2020}, and the full case $0\leq \s<\d$ \cite{BJW2019edp, NRS2021}, thanks in large part to the modulated energy method.  For further extensions of the  method in terms of regularity assumptions and incorporating noise, see \cite{Rosenzweig2022, Rosenzweig2022a} and \cite{Rosenzweig2020spv,RS2021, hC2023}, respectively. For gradient dynamics at positive temperature, the modulated energy combines naturally with the previously used relative entropy \cite{JW2018, GlBM2021, FW2023, RS2024ss} in the form of the modulated free energy \cite{BJW2019crm, BJW2019edp, BJW2020, CdCRS2023, RS2023lsi, RS2024ss, CFGW2024}, which is well-suited to proving entropic propagation of chaos and can even handle logarithmically attractive interactions \cite{BJW2019crm,BJW2020,CdCRS2023a}. 

Although this approach is orthogonal to our own, we mention that techniques based on weighted estimates for hierarchies of cumulants can show propagation of chaos even for some Riesz interactions \cite{BDJ2024}. This approach, and its variant in terms of hierarchies of marginals, is better suited to the positive temperature/diffusive case, in contrast to our zero temperature setting, where stronger results are known \cite{BJS2022, BDJ2024, DJ2025}. While these approaches are currently unable to treat the full {potential} Riesz range, they do have advantages in terms of working for both first- and second-order dynamics, are robust to the precise form of the interaction, and in some cases achieve sharp rates \cite{Lacker2023, LlF2023, hCR2023, Wang2024sharp}, though not for any interaction more singular than log. Note that while these results imply probabilistic statements about the mean-field convergence rates, they do not imply the deterministic convergence rates of the form obtainable with our methods. 
 
As explained in \cref{ssec:introMot}, the optimal rate of convergence as $N\rightarrow\infty$ of the empirical measure $\mu_N^t$ to the solution $\mu^t$ of \eqref{eq:MFlim} in the distance $\Fr_N$ is $N^{\frac{\s}{\d}-1}$. This optimal rate was only known in the (super-)Coulomb case.  It was first established for stationary solutions, corresponding to minimizers of the associated Coulomb/Riesz energy, i.e. stationary solutions of \eqref{eq:MFlim} with $\M=-\mathbb{I}$; see \cite{SS2015, SS2015log, RS2016, PS2017}. More recently, the last two authors \cite{RS2022} proved the same rate for nonstationary solutions. The sub-Coulomb case is only known for stationary solutions on the flat torus \cite{HSSS2017}. Previous works \cite{Hauray2009,CCH2014} obtain convergence rates in Wasserstein distance for the sub-Coulomb case, which are in general not directly comparable to the modulated energy and, per our understanding, are far from sharp.

Our first application of \cref{thm:FI} establishes  mean-field convergence at the optimal rate $N^{\frac{\s}{\d}-1}$ for the full Riesz case. By the now standard modulated-energy method, the proof is an immediate consequence of \cref{thm:FI} (see \cref{ssec:appMFmain}).

\begin{thm}\label{thm:mainMF}
Let $\g$ be of the form \eqref{eq:gmod} and $\mathsf{V}$ satisfy $\int_0^T(\|\nabla\mathsf{V}^t\|_{L^\infty} + \|\Dm^{\frac{\as}{2}}\nab \mathsf{V}^t\|_{L^{\frac{2\d}{\as-2}}}\indic_{\as>2})dt<\infty$ for some $\as\in (\d,\d+2)$ and $T>0$. Assume the equation \eqref{eq:MFlim} admits a solution $\mu \in L^\infty([0,T],\P(\R^\d)\cap L^\infty(\R^\d))$, such that
\begin{equation}\label{nmut}
{\mathcal{N}(u^T) \coloneqq \int_0^T (\|\nab u^t\|_{L^\infty} + \|\Dm^{\frac{\as}{2}}u^t\|_{L^{\frac{2\d}{\as-2}}}\indic_{\as>2})dt< \infty, \qquad \text{where} \ u^t\coloneqq -\M\nab\g\ast\mu^t +\mathsf{V}^t.}
\end{equation}
If $\s=0$, then also assume that $\int_{\R^\d}\log(1+|x|)d\mu^t(x) < \infty$ for all $t\in [0,T]$.\footnote{This condition ensures that the convolution $\g\ast\mu$ is pointwise defined, but none of our estimates will depend on it. Through a Gronwall argument, one checks that if $\mu^0$ satisfies this condition, then $\mu^t$ also satisfies this condition uniformly in $[0,T]$.} Let $\ux_N$ solve \eqref{eq:MFode}. Then there exists a constant  $C>0$, depending only on $\d$, $\s$, such that
\begin{multline}\label{distcoul}
\Fr_N(\ux_N^t,\mu^t) + \frac{\log (N\|\mu^t\|_{L^\infty}) }{2 N\d} \indic_{\s=0} + C\|\mu^t\|_{L^\infty}^{\frac{\s}{\d}}N^{\frac{\s}{\d}-1}\\
 \le Ce^{C\mathcal{N}(u^t)}\Bigg(\Fr_N(\ux_N^0,\mu^0) + \sup_{\tau \in [0,t]}\Big( \frac{\log (N\|\mu^\tau\|_{L^\infty}) }{2 N\d} \indic_{\s=0} + C\|\mu^\tau\|_{L^\infty}^{\frac{\s}{\d}}N^{\frac{\s}{\d}-1}\Big)\Bigg).
\end{multline}

Consequently, if $\mu_N^0 \rightharpoonup \mu^0$ in the weak-* topology for measures and $\lim_{N\to \infty}F_N(\ux_N^0, \mu^0) =0$, then
\begin{equation}
\mu_N^t \rightharpoonup \mu^t \qquad \forall t\in [0,T].
\end{equation}
\end{thm}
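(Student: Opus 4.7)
The plan is to use the standard modulated energy method: differentiate $\Fr_N(\ux_N^t,\mu^t)$ in time, recognize that the resulting expression is (up to a sign-definite dissipation term) the commutator controlled by \cref{thm:FI} applied with $v=u^t$, and conclude by Gronwall.

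Combining the particle ODE~\eqref{eq:MFode} with the continuity equation $\p_t\mu^t + \div(\mu^t(\M\nab\g\ast\mu^t - \mathsf{V}^t))=0$ satisfied by the mean-field density, a direct computation (cf.~\cite[Proposition~2.3]{Serfaty2020}, \cite[Section~4]{NRS2021}) yields, for a.e.~$t\in[0,T]$,
\begin{equation*}
\frac{d}{dt}\Fr_N(\ux_N^t,\mu^t)
= \frac{1}{2}\int_{(\R^\d)^2\setminus\triangle}(u^t(x)-u^t(y))\cdot\nab\g(x-y)\,d(\mu_N^t-\mu^t)^{\otimes 2}(x,y) + \mathcal{E}^t,
\end{equation*}
where $\mathcal{E}^t$ is a suitably renormalized version of $\int \nab h_N^t\cdot\tfrac{1}{2}(\M+\M^\top)\nab h_N^t$ with $h_N^t\coloneqq\g\ast(\mu_N^t-\mu^t)$. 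By \eqref{eq:Mnd} the symmetric part $\tfrac{1}{2}(\M+\M^\top)$ is negative semidefinite, so $\mathcal{E}^t\le 0$ and is discarded. The delicate point in this calculation is the excision of the diagonal in~\eqref{eq:modenergy}: since the particle self-interaction is constant along the flow, its contribution disappears and the commutator structure emerges cleanly.

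Set $\mathcal{R}(u^t)\coloneqq \|\nab u^t\|_{L^\infty}+\|\Dm^{\as/2}u^t\|_{L^{2\d/(\as-2)}}\indic_{\as>2}$, so that \eqref{nmut} reads $\int_0^T\mathcal{R}(u^t)\,dt=\mathcal{N}(u^T)<\infty$ and in particular $u^t$ is Lipschitz for a.e.~$t$. Applying \cref{thm:FI} with $v=u^t$ and using the identities $-\tfrac{\log\la^t}{2N}\indic_{\s=0} = \tfrac{\log(N\|\mu^t\|_{L^\infty})}{2N\d}\indic_{\s=0}$ and $\|\mu^t\|_{L^\infty}(\la^t)^{\d-\s} = \|\mu^t\|_{L^\infty}^{\s/\d}N^{\s/\d-1}$ (with $\la^t\coloneqq (N\|\mu^t\|_{L^\infty})^{-1/\d}$), we deduce $\frac{d}{dt}\Fr_N(\ux_N^t,\mu^t) \le C\mathcal{R}(u^t)\,G^t$, where $G^t$ denotes the entire left-hand side of \eqref{distcoul}. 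Now set $H(t)\coloneqq\sup_{\tau\in[0,t]}\bigl(\tfrac{\log(N\|\mu^\tau\|_{L^\infty})}{2N\d}\indic_{\s=0} + C\|\mu^\tau\|_{L^\infty}^{\s/\d}N^{\s/\d-1}\bigr)$, which is nondecreasing in $t$ and pointwise dominates the $t$-dependent correction, so $G^t\le \Fr_N(\ux_N^t,\mu^t)+H(t)$. Integrating in $\tau\in[0,t]$,
\begin{equation*}
\Fr_N(\ux_N^t,\mu^t)+H(t) \le \Fr_N(\ux_N^0,\mu^0)+H(t)+\int_0^t C\mathcal{R}(u^\tau)\bigl(\Fr_N(\ux_N^\tau,\mu^\tau)+H(\tau)\bigr)\,d\tau,
\end{equation*}
and since the forcing $\Fr_N(\ux_N^0,\mu^0)+H(t)$ is nondecreasing in $t$, Gronwall's lemma delivers $G^t\le\bigl(\Fr_N(\ux_N^0,\mu^0)+H(t)\bigr)e^{C\mathcal{N}(u^t)}$, which is precisely \eqref{distcoul}.

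For the weak-$*$ convergence, the hypothesis $\lim_N\Fr_N(\ux_N^0,\mu^0)=0$ together with the fact that both $N$-dependent correction terms contributing to $H(t)$ tend to $0$ as $N\to\infty$ for fixed $t$ forces $G^t\to 0$ via \eqref{distcoul}; combined with the coercivity estimate for the modulated energy (\cref{prop:coer}), this yields $\|\mu_N^t-\mu^t\|_{H^{-\d/2-\vep}}\to 0$ for any $\vep>0$, hence $\mu_N^t\rightharpoonup\mu^t$. The entire nontrivial content sits inside \cref{thm:FI}; given it, the only remaining subtlety is the time-derivative identity above, whose diagonal-excision and renormalization technicalities are by now classical in the modulated energy literature.
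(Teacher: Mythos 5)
Your proof is correct and takes essentially the same route as the paper: write the time-derivative of the modulated energy as the first-order commutator with transport field $u^t=-\M\nabla\g\ast\mu^t+\mathsf V^t$ plus a sign-definite dissipation term, discard the dissipation using \eqref{eq:Mnd}, apply \cref{thm:FI} with $v=u^t$, and close with Gr\"onwall. The paper's proof (in \cref{thm:mainMF'}, applied with $\g$ exact Riesz) does exactly this, citing the same differential inequality from the literature and using \cref{rem:MElb} to guarantee the quantity being bounded is nonnegative; your more explicit handling of the monotone envelope $H(t)$ in the Gr\"onwall step is a welcome addition. One cosmetic inaccuracy: you characterize $\mathcal E^t$ as a renormalized version of $\int\nabla h_N^t\cdot\tfrac12(\M+\M^\top)\nabla h_N^t$, which is the Coulomb-case ($\s=\d-2$) expression; for general Riesz $\s$ the dissipation has an analogous sign structure but is expressed either via the Caffarelli--Silvestre extended energy or in Fourier variables rather than as a local integral of $|\nabla h_N|^2$-type. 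Since you are invoking the classical differential inequality as a black box anyway, this does not affect the validity of the argument.
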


As a consequence of \cref{thm:FI}, we also have new rates of convergence in the modulated-energy distance for dynamics with noise (i.e. positive temperature) that improve upon the previous work \cite{RS2021} by the last two authors. The pure modulated-energy approach of \cite{RS2021} is limited to the sub-Coulomb case due to the It\^o correction to the evolution of the energy.

At the microscopic level, the system \eqref{eq:MFode} is generalized to
\begin{equation}\label{eq:MFsde}
\begin{cases}
dx_i^t = \displaystyle\frac{1}{N}\sum_{1\leq j\leq N : j\neq i} \M\nabla\g(x_i^t-x_j^t)dt - \mathsf{V}^t(x_i^t) dt + \sqrt{2/\be}dW_i\\
x_i^t\big|_{t=0} = x_i^\circ,
\end{cases}\qquad i\in[N],
\end{equation}
where $\be \in (0,\infty]$ has the interpretation of inverse temperature, $W_1,\ldots,W_N$ are iid $\d$-dimensional Wiener processes, and the stochastic differential $dW_i$ is understood in the It\^{o} sense. Under the above assumptions on $\M,\g,\mathsf{V}$, the system has a unique strong solution and particles almost surely do not collide. At the macroscopic level, the mean-field equation \eqref{eq:MFlim} gains a diffusion term proportional to $\frac1\beta$,
\begin{align}\label{eq:MFlims}
\partial_t \mu= \div ((\mathsf{V}-\M \nabla \g*\mu) \mu) + \frac1\be \Delta\mu,  \qquad (t,x)\in\R_+\times\R^\d.
\end{align}

Analogous to \cref{thm:mainMF}, we have the following result. Note that we have a larger $O(N^{\frac{\s+2}{\d}-1})$ additive error compared to above. This is a consequence of the  It\^o correction to the evolution of the modulated energy, which leads to an expression corresponding to $-\Fr_N(\ux_N^t,\mu^t)$, but with $\g$ replaced by $-\Delta\g$, which is nonnegative up to an $N^{\frac{\s+2}{\d}-1}$ error.

\begin{thm}\label{thm:mainMFs}
Let $\g$ be of the form \eqref{eq:gmod} for $\s < \d-2$. Then under the same assumptions as \cref{thm:mainMF},
\begin{multline}\label{distcouls}
\E\Bigg[\Fr_N(\ux_N^t,\mu^t) + \frac{\log (N\|\mu^t\|_{L^\infty}) }{2 N\d} \indic_{\s=0} + C\|\mu^t\|_{L^\infty}^{\frac{\s}{\d}}N^{\frac{\s}{\d}-1} + \frac{C}{\be}\|\mu^t\|_{L^\infty}^{\frac{\s+2}{\d}}N^{\frac{\s+2}{\d}-1} \Bigg]\\
 \le Ce^{C\mathcal{N}(u^t)}\Bigg(\Fr_N(\ux_N^0,\mu^0) + \sup_{\tau \in [0,t]}\Big( \frac{\log (N\|\mu^\tau\|_{L^\infty}) }{2 N\d} \indic_{\s=0} + C\|\mu^\tau\|_{L^\infty}^{\frac{\s}{\d}}N^{\frac{\s}{\d}-1}+\frac{C}{\be}\|\mu^\tau\|_{L^\infty}^{\frac{\s+2}{\d}}N^{\frac{\s+2}{\d}-1}\Big)\Bigg).
\end{multline}
\end{thm}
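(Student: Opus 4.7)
The plan is to apply It\^o's formula to $t\mapsto \Fr_N(\ux_N^t,\mu^t)$ along the coupled evolutions \eqref{eq:MFsde}--\eqref{eq:MFlims}, decompose the differential into a deterministic drift, an It\^o correction, and a martingale, take expectations, and close via Gronwall. Write $u^t \coloneqq -\M\nab\g\ast\mu^t+\mathsf{V}^t$ for the mean-field velocity. The drift collects the particle-drift contribution $\sum_i \nab_{x_i}\Fr_N\cdot(\text{drift of }x_i^t)$ together with the transport contribution $\int\tfrac{\delta\Fr_N}{\delta\mu}\,\div(u^t\mu^t)\,dx$ from \eqref{eq:MFlims}. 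Exactly as in the proof of \cref{thm:mainMF}, these combine (after discarding a non-positive term allowed by \eqref{eq:Mnd}) into the commutator quantity
\[
-\int_{(\R^\d)^2\setminus\triangle}(u^t(x)-u^t(y))\cdot \M\nab\g(x-y)\,d(\mu_N^t-\mu^t)^{\otimes 2}(x,y).
\]
Rewriting the integrand as $(\M^{\top} u^t(x)-\M^{\top} u^t(y))\cdot\nab\g(x-y)$ and invoking \cref{thm:FI} with $v = \M^{\top} u^t$ bounds this commutator by $C(\|\nab u^t\|_{L^\infty}+\|\Dm^{\as/2}u^t\|_{L^{2\d/(\as-2)}}\indic_{\as>2})(\Fr_N(\ux_N^t,\mu^t)-\tfrac{\log\la}{2N}\indic_{\s=0}+C\|\mu^t\|_{L^\infty}\la^{\d-\s})$, which is precisely the ingredient that drives \eqref{distcoul} in the $\be=\infty$ case.

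The genuinely new piece is the pair of $\frac{1}{\be}$-terms. The decisive observation is that in the sub-Coulomb range $\s<\d-2$ one has $-\D\g = c\,\tilde\g$ pointwise for a positive constant $c=c(\d,\s)$, where $\tilde\g$ denotes the Riesz kernel of parameter $\tilde\s\coloneqq\s+2$, still in the potential regime $\tilde\s<\d$. Letting $\widetilde{\Fr}_N$ denote the modulated energy \eqref{eq:modenergy} with $\g$ replaced by $\tilde\g$, direct computation yields
\[
\frac{1}{\be}\sum_{i=1}^N \D_{x_i}\Fr_N(\ux_N^t,\mu^t) = -\frac{c}{\be}\Bigl(2\widetilde{\Fr}_N(\ux_N^t,\mu^t) + \int \tilde\g\ast\mu^t\,d(\mu_N^t-\mu^t)\Bigr),
\]
while pairing $\tfrac{\delta\Fr_N}{\delta\mu^t}=-\g\ast(\mu_N^t-\mu^t)$ against $\tfrac{1}{\be}\D\mu^t$ and integrating by parts produces exactly the opposite linear term $+\tfrac{c}{\be}\int \tilde\g\ast\mu^t\,d(\mu_N^t-\mu^t)$. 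The ``linear in $\mu_N^t-\mu^t$'' pieces cancel, leaving the clean noise contribution $-\tfrac{2c}{\be}\widetilde{\Fr}_N(\ux_N^t,\mu^t)$. Applying the sub-Coulomb modulated-energy lower bound of \cref{rem:MElb} to the Riesz parameter $\tilde\s$ gives $\widetilde{\Fr}_N \ge -C\|\mu^t\|_{L^\infty}^{\tilde\s/\d}N^{\tilde\s/\d-1}$, which accounts for precisely the new additive error $\tfrac{C}{\be}\|\mu^t\|_{L^\infty}^{(\s+2)/\d}N^{(\s+2)/\d-1}$ on the left-hand side of \eqref{distcouls}.

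Taking expectations kills the martingale, and combining the two bounds above yields a Gronwall-type differential inequality for the quantity $\mathcal{G}_N^t\coloneqq\Fr_N(\ux_N^t,\mu^t)$ augmented by the three nonnegative additive error terms on the left-hand side of \eqref{distcouls}. Integrating with the Gronwall factor $e^{C\mathcal{N}(u^t)}$ delivers \eqref{distcouls}. The principal obstacle is justifying the It\^o computation in the presence of the diagonal singularity of $\g$, since $\D_{x_i}\Fr_N$ is a priori undefined on colliding configurations. My plan is to run the entire argument with $\g$ replaced by the smooth truncation $\g_\eta$ from \cref{ssec:intropf} (for which both cancellations above persist with $-\D\g_\eta$ in place of $c\tilde\g$), and pass to the limit $\eta\downarrow 0$ using the monotonicity/repulsivity of $\g_\eta$ together with the $N$-uniform control provided by \cref{thm:FI}, thereby bypassing any pathwise non-collision statement in the sub-Coulomb regime.
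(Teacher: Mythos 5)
Your proposal is correct and follows essentially the same route as the paper: It\^o's formula, the dissipation inequality plus the commutator estimate of \cref{thm:FI} (in its general form \cref{thm:FI'}) for the drift, the observation that $-\Delta\g$ is a positive multiple of the $(\s+2)$-Riesz kernel so that the combined particle-It\^o/PDE-diffusion contribution reduces to $-\tfrac{2c}{\be}\widetilde{\Fr}_N$ and is controlled by the almost-positivity bound of \cref{rem:MElb} at parameter $\s+2<\d$, and finally Gr\"onwall after taking expectations; the paper writes this correction compactly as $\tfrac{1}{\be}\int_{(\R^\d)^2\setminus\triangle}\Delta\g(x-y)\,d(\mu_N^t-\mu^t)^{\otimes 2}\le \tfrac{C}{\be}\|\mu^t\|_{L^\infty}\la^{\d-\s-2}$, which is exactly your cancellation spelled out. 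One harmless slip: in the drift, the standard dissipation inequality pairs differences of $u^t$ with $\nab\g$ itself, the matrix $\M$ entering only through the discarded sign-definite term $\tfrac1N\sum_i \M\nab h_N^{(i)}(x_i)\cdot\nab h_N^{(i)}(x_i)$, so the commutator carries no $\M\nab\g$ and \cref{thm:FI} is applied with $v=u^t$ directly; your variant with $v=\M^{\top}u^t$ would anyway give the same form of bound, only with constants depending on $|\M|$.
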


In \cref{sec:appMF}, we obtain \Cref{thm:mainMF,thm:mainMFs} as special cases of the more general \cref{thm:mainMF'} valid for Riesz-type potentials. Before commenting on the second application, we make a few remarks.

\begin{remark}\label{rem:V=0}
When $\mathsf{V}^t=0$, the norm $\|\mu^t\|_{L^\infty}$ is nonincreasing when $\mu^t$ solves \eqref{eq:MFlims} for any $\beta \in (0,\infty]$. Consequently, the estimates \eqref{distcoul} and \eqref{distcouls} respectively simplify to
\begin{multline}\label{distcoul'}
\Fr_N(\ux_N^t,\mu^t) + \frac{\log (N\|\mu^t\|_{L^\infty}) }{2 N\d} \indic_{\s=0} + C\|\mu^t\|_{L^\infty}^{\frac{\s}{\d}}N^{\frac{\s}{\d}-1}\\
 \le Ce^{C\int_0^t (\|\nab u^\tau\|_{L^\infty} + \|\Dm^{\frac{\as}{2}}u^\tau\|_{L^{\frac{2\d}{\as-2}}}) d\tau}\Bigg(\Fr_N(\ux_N^0,\mu^0) +  \frac{\log (N\|\mu^0\|_{L^\infty}) }{2 N\d} \indic_{\s=0} + C\|\mu^0\|_{L^\infty}^{\frac{\s}{\d}}N^{\frac{\s}{\d}-1}\Bigg)
\end{multline}
and
\begin{multline}\label{distcouls'}
\E\Bigg[\Fr_N(\ux_N^t,\mu^t) + \frac{\log (N\|\mu^t\|_{L^\infty}) }{2 N\d} \indic_{\s=0} + C\|\mu^t\|_{L^\infty}^{\frac{\s}{\d}}N^{\frac{\s}{\d}-1} + C\|\mu^t\|_{L^\infty}^{\frac{\s+2}{\d}}N^{\frac{\s+2}{\d}-1} \Bigg]\\
 \le Ce^{C\int_0^t (\|\nab u^\tau\|_{L^\infty} + \|\Dm^{\frac{\as}{2}}u^\tau\|_{L^{\frac{2\d}{\as-2}}}) d\tau}\Bigg(\Fr_N(\ux_N^0,\mu^0) +\frac{\log (N\|\mu^0\|_{L^\infty}) }{2 N\d} \indic_{\s=0} + C\|\mu^0\|_{L^\infty}^{\frac{\s}{\d}}N^{\frac{\s}{\d}-1}\\
 +C\|\mu^0\|_{L^\infty}^{\frac{\s+2}{\d}}N^{\frac{\s+2}{\d}-1}\Bigg).
\end{multline}
\end{remark}

\begin{remark}\label{rem:extVFreg}
The above condition for $\mathsf{V}^t = -c^t x + \Phi^t(x)$ is satisfied, where $\int_0^T |c^t|dt <\infty$, and  $\int_0^T(\|\nab \Phi^t\|_{L^\infty} + \|\Dm^{\frac{\as}{2}} \Phi^t\|_{L^{\frac{2\d}{\as-2}}}\indic_{\as>2})dt < \infty$. Indeed, note that since the singular sub-Coulomb case is vacuous for $\d\le 2$, the condition $\d\ge 3$ and $\as\in (\d,\d+2)$ imply that $\frac{\s}{2}>\frac{3}{2}$. As the (vector-valued) distribution $\Dm^{a}x = 0$ for all $a>1$, the claim now follows by the triangle inequality. 
The regularity assumption \eqref{nmut} for $\mu^t$ is a bit more involved. We defer verification until \cref{ssec:appMFreg}, where we show that \eqref{nmut} is satisfied provided that $\mu^0$ is suitably regular. 
\end{remark}

\begin{remark}\label{rem:pc}
By a standard argument (e.g. see \cite[Remark 3.7]{Serfaty2020} or \cite[Remark 1.5]{RS2021} for details), \cref{thm:mainMF} implies propagation of chaos for the system \eqref{eq:MFode}. However, this ``global-to-local'' argument in general leads to a suboptimal rate  \cite{Lacker2023}.
\end{remark}

\begin{remark}\label{rem:nonsing'}
Building upon \cref{rem:nonsing}, one also has an analogous estimate to \eqref{distcoul} in the nonsingular case $-2<\s\le 0$ without additive errors. This is shown in the aforementioned forthcoming work \cite{RSW2025}. 
\end{remark}

The second application concerns so-called supercritical mean-field limits in the form of the derivation of the Lake equation from the Newtonian $N$-body problem in the joint mean-field and quasineutral limit. In the recent paper \cite{RSlake} by the last two named authors on this problem for the (super-)Coulomb case, the sharp first-order commutator estimate \eqref{main1} plays an essential role to show convergence provided $N^{\frac{\s}{\d}-1}/\vep^2 \rightarrow 0$ as $\vep\rightarrow 0$ and $N\rightarrow\infty$, where $\vep$ is the quasineutral parameter. This result is sharp, as when $N^{\frac{\s}{\d}-1}/\vep^2 \not\rightarrow 0$, an explicit example shows convergence  may fail. The present \cref{thm:FI} would allow one to extend the results of \cite{RSlake} to the sub-Coulomb case, provided one could also extend the necessary auxiliary results concerning regularity of the boundary of support for equilibrium measures to the sub-Coulomb case. In the periodic setting, where the equilibrium measure takes full support, this last part is not an issue.

\subsection{Remaining questions}\label{ssec:introRQ}
Although we establish a sharp first-order commutator estimate for all Riesz modulated energies, our work raises and leaves open several questions that we briefly discuss below.

The first question is the quantitative dependence on $v$ in the right-hand side of \eqref{main1}. In the (super-)Coulomb case, it has been shown \cite{LS2018, Serfaty2023, Rosenzweig2021ne, RS2022}  that the estimate holds with only $\|\nab v\|_{L^\infty}$. This dependence is essentially sharp, as we show in the forthcoming work \cite{hCRScx} that it cannot be weakened to $\|\nab v\|_{BMO}$, except if $\d=1$ and $\s=0$. In the sub-Coulomb case, it has been shown \cite{NRS2021} that the estimate holds with $\|\nab v\|_{L^\infty} + \|\Dm^{\frac{\d-\s}{2}}v\|_{L^{\frac{2\d}{\d-\s-2}}}$. Importantly, both terms scale the same way under the transformation $v\mapsto v(\la\cdot)$. Again, this is essentially sharp, as we show in \cite{hCRScx} that  $\|\Dm^{\frac{\d-\s}{2}}v\|_{L^{\frac{2\d}{\d-\s-2}}}$ cannot be weakened to $\|\Dm^{\frac\as2} v\|_{L^{\frac{2\d}{\as-2}}}$ for any $\as<\d-\s$. The dependence $\|\Dm^{\frac{\mathsf{a}}{2}}v\|_{L^{\frac{2\d}{\mathsf{a}-2}}}$, for any $\mathsf{a}\in (\d,\d+2)$, is an artifact of our passing through an intermediate commutator estimate for Bessel potentials  in the proof of \cref{thm:FI}. By Sobolev embedding, it is stronger than $\|\Dm^{\frac{\d-\s}{2}}v\|_{L^{\frac{2\d}{\d-\s-2}}}$. We expect the latter to be the optimal regularity dependence even with the sharp $N^{\frac{\s}{\d}-1}$ error, but it does not seem obtainable with our current proof. 
 
The second  question is the \emph{localizability} of the estimate. To illustrate what we mean, consider the Coulomb case. Following the so-called electric reformulation of the energy, we may set $h_N\coloneqq \g\ast(\mu_N-\mu)$ and express the modulated energy as a renormalization of $\int_{\R^\d}|\nab h_N|^2$. Evidently, this latter quantity may be localized to any domain $\Omega\subset\R^\d$. Letting $v$ be a transport and taking $\Omega=\supp v$, one would like to establish the localized functional inequality
\begin{align}\label{eq:loccomm}
\int_{(\R^\d)^2}(v(x)-v(y))\cdot\nabla\g(x-y)d(\mu_N-\mu)^{\otimes 2} \le C_v\int_{\Omega}|\nab h_N|^2.
\end{align}
Such localized estimates are important for applications to CLTs for the fluctuations of Riesz gases at mesocales (e.g. see \cite{Serfaty2023}). A sharp estimate of the form \eqref{eq:loccomm} was  shown first in the Coulomb case up to second order \cite{LS2018, Serfaty2023} and then recently for the (super-)Coulomb case at any order \cite{RS2022}.  Even for the local case $\s=\d-2k$, where $\g$ is the fundamental solution of $(-\Delta)^k$, the difficulty remains that our proof of the renormalized commutator itself uses nonlocal estimates in the form of these Kato-Ponce commutator bounds, and therefore is not evidently amenable to showing the desired localized estimates.


The last question we mention is a sharp estimate for higher-order commutators, which state that for $n\ge 1$, the quantity in \eqref{15} is again controlled by $ C(\Fr_N(\ux_N, \mu)+ N^{-\alpha})$. Such inequalities were first shown at second order (i.e. $n=2$) in \cite{LS2018,Rosenzweig2020spv}  for the $\d=2$ Coulomb case, then in \cite{Serfaty2023} for the general  Coulomb case (although with an estimate which is not even optimal in its $\Fr_N$ dependence), and later in \cite{NRS2021} for the full Riesz-type case $0\leq \s<\d$. These second-order estimates were important for the same problem of fluctuations of Coulomb gases, as well as for deriving mean-field limits with multiplicative noise.  
Estimates beyond second order are also useful, allowing to  obtain finer estimates  on the fluctuations of Riesz gases to treat a broader class of interactions \cite{PS2024} and also to compute the asymptotics of $n$-th order cumulants \cite{Rosenzweigcum}. Though not explicitly written, the approach of \cite{NRS2021} yields higher-order estimates, although these are in general not sharp in their additive error. In contrast, the sharp estimates from \cite{Serfaty2023} are only to second order for the $\d=2$ Coulomb case. More importantly, their proof is quite intricate and seems impossible to generalize to higher-order derivatives. In \cite{RS2022}, sharp, localized estimates were shown at all orders for the (super-)Coulomb case via a delicate induction argument. In contrast, it is not obvious how to extend the method of this paper to $n\ge 2$, given that desymmetrizing \eqref{15} and writing in terms of iterated commutators entails difficulties of considering commutators of commutators, not too mention the algebraic complexity as $n$ increases.

\subsection{Organization of article}\label{ssec:introOrg}
In \cref{sec:ME}, we describe the truncation procedure and its basic properties for Riesz potentials (\cref{ssec:MEpt}). We then introduce a class of Riesz-type potentials, alluded to in the introduction, to which we extend the potential truncation (\cref{ssec:MEpt'}). Finally, we use the truncation scheme to prove optimal lower bounds for the modulated energy and small scale energy control (\cref{ssec:MEmon}), as well as a new, sharp coercivity estimate for the modulated energy (\cref{ssec:MEcoer}).

In \cref{sec:KP}, we prove some Kato-Ponce type commutator estimates for powers of the inhomogeneous fractional Laplacian. The main result is \cref{prop:comm}. This is the most technical part of the paper. As a consequence, we have \cref{cor:scaledcomm}, which is the desired functional inequality but for the truncated potential (i.e. pre-renormalization).

In \cref{sec:rcom}, we combine the potential truncation of \cref{sec:ME} with the unrenormalized commutator estimate of \cref{sec:KP} to prove \cref{thm:FI}. In fact, we present with \cref{thm:FI'} a more general functional inequality valid for the Riesz-type potentials introduced in \cref{ssec:MEpt'}, which includes \cref{thm:FI} as a special case.  

In \cref{sec:appMF}, we combine \cref{thm:FI'} with the well-known dissipation relation for the modulated energy to prove \cref{thm:mainMF'}, obtaining \Cref{thm:mainMF,thm:mainMFs} as special cases (\cref{ssec:appMFmain}). We close the paper by verifying that solutions of the mean-field equation \eqref{eq:MFlim} have vector fields satisfying the regularity condition of \cref{thm:FI'} (\cref{ssec:appMFreg}). Note this does not immediately follow from an $L^p$ assumption on the initial density $\mu^0$ since for any $\as \in (\d,\d+2)$,  $\Dm^{\frac{\as}{2}}\nab\g\ast\mu = \cd \nab\Dm^{\frac{\as}{2}+\s-\d}\mu$, which cannot be controlled in terms of $\|\mu\|_{L^p}$ in general, except when $\s<\frac{\d}{2}-1$.

Lastly, in \cref{ap:bessel}, we prove some properties of Bessel potentials, stated in \cref{lem:bespot}, that are used in this paper but which do not seem conveniently available in the literature.

\subsection{Notation}\label{ssec:preN}
We close the introduction with the basic notation used throughout the article without further comment, mostly following the conventions of \cite{NRS2021,RS2021, RS2022}.

Given nonnegative quantities $A$ and $B$, we write $A\lesssim B$ if there exists a constant $C>0$, independent of $A$ and $B$, such that $A\leq CB$. If $A \lesssim B$ and $B\lesssim A$, we write $A\sim B$. Throughout, $C$ will be used to denote a generic constant, possibly changing from line to line. 

$\N$ denotes the natural numbers excluding zero, and $\N_0$ including zero. {For $N\in\N$, we abbreviate $[N]\coloneqq \{1,\ldots,N\}$.} $\R_+$ denotes the positive reals. Given $x\in\R^\d$ and $r>0$, $B(x,r)$ and $\p B(x,r)$ respectively denote the ball and sphere centered at $x$ of radius $r$. Given a function $f$, we denote its support by $\supp f$. The notation $\nabla^{\otimes k}f$ denotes the $k$-tensor field with components $(\p_{i_1}\cdots\p_{i_k}f)_{1\leq i_1,\ldots,i_k\leq \d}$. {For $x\in\R$, $\jp{x}$ denotes $(1+|x|^2)^{1/2}$.}

$\P(\R^\d)$ denotes the space of Borel probability measures on $\R^\d$. If $\mu$ is absolutely continuous with respect to Lebesgue measure, we shall abuse notation by letting $\mu$ denote both the measure and its Lebesgue density. When the measure is clearly understood to be Lebesgue, we shall simply write $\int_{\R^{\d}}f$ instead of $\int_{\R^\d}fdx$. $\Sc(\R^\d)$ denotes the space of Schwartz functions.

Lastly, we use the notation $\hat{f}$ or $\widehat{f}$ to denote the Fourier transform $\hat{f}(\xi) \coloneqq \int_{\R^\d}f(x)e^{-2\pi i \xi\cdot x}dx$ and let $\jp{\nab}$ and $\Dm$ denote the Fourier multipliers with symbols $\jp{2\pi \xi}$ and $|2\pi \xi|$, respectively.

{

\section{The modulated energy}\label{sec:ME}
In this section, we discuss properties of the modulated energy functional introduced in \eqref{eq:modenergy}.

\subsection{Truncation for Riesz potentials}\label{ssec:MEpt}
We first introduce the new scheme for truncating the interaction potential $\g$ based on the identity \eqref{eq:introgintrep}. This replaces the potential regularization scheme introduced in \cite{NRS2021} based on averaging with respect to a mollifier. Although this paper is primarily interested in the sub-Coulomb case $\s<\d-2$, we emphasize that all our results in this section and the next are valid in the Coulomb/super-Coulomb case $\d-2\le \s<\d$, providing an alternative to the truncation scheme used in \cite{PS2017, Serfaty2020, AS2021, RS2022} and importantly not requiring a dimension extension for the nonlocal case $\d-2<\s<\d$. 

\begin{lemma}\label{lem:gintrep}
Let $\phi\in C_b(\R^\d)$ be radial such that $\phi(r)\rightarrow 0$ as $r\rightarrow\infty$, where we use radial symmetry to commit an abuse of notation. Assume that
\begin{align}
&\int_0^\infty r^{\s-1}|\phi(r)|dr < \infty, \qquad  \text{if $\s>0$}, \label{eq:gintrep_phicon} \\
&\int_0^\infty|\log(r)\phi'(r)|dr<\infty \ \text{and} \ \lim_{r\rightarrow 0^+}(\phi(r)-\phi(0))\log r=\lim_{r\rightarrow\infty}\phi(r)\log r = 0, \qquad \text{if \ $\s=0$}.\label{eq:gintrep_phicon'}
\end{align}
Then setting $\phi_t \coloneqq t^{-\d}\phi(\cdot/t)$, it holds that
\begin{align}\label{eq:gintrep}
\forall x\ne 0, \qquad \g(x) = \mathsf{c}_{\phi,\d,\s}\lim_{T\rightarrow\infty}\Bigg(\int_0^T t^{\d-\s}\phi_t(x)\frac{dt}{t}  -C_{\phi,T}\indic_{\s=0}\Bigg),
\end{align}
where\footnote{Implicitly, we are assuming that the quantities in defining $\mathsf{c}_{\phi,\d,\s}$ are nonzero.}
\begin{align}
\mathsf{c}_{\phi,\d,\s}^{-1} &\coloneqq \begin{cases}{\s\int_0^\infty r^{\s}\phi(r)\frac{dr}{r}} ,& {\s>0} \\ {\phi(0)}, & {\s=0} ,\end{cases}  \label{eq:cphids_def}\\
C_{\phi,T} &\coloneqq \phi(0)\log(T)-\int_0^\infty\log(r)\phi'(r)dr. \label{eq:CphiT_def}
\end{align}
\end{lemma}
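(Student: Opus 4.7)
The plan is to establish the identity by a direct scaling computation: fix $x\ne 0$ and substitute $u = |x|/t$ (so $dt/t = -du/u$ and $t^{-\s} = (|x|/u)^{-\s} = |x|^{-\s} u^{\s}$) to rewrite
\[
\int_0^T t^{\d-\s}\phi_t(x)\frac{dt}{t} = \int_0^T t^{-\s}\phi(|x|/t)\frac{dt}{t} = |x|^{-\s}\int_{|x|/T}^{\infty} u^{\s-1}\phi(u)\,du.
\]
This reduces the claim to understanding the behavior as $T\to\infty$ of a one-dimensional integral of a radial profile, and makes manifest the expected homogeneity.

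For the case $\s>0$, assumption \eqref{eq:gintrep_phicon} states exactly that $\int_0^\infty u^{\s-1}|\phi(u)|du<\infty$, so we may pass to the limit (with $C_{\phi,T}=0$) and obtain $|x|^{-\s}\int_0^\infty u^{\s-1}\phi(u)\,du$. Plugging in the constant $\mathsf{c}_{\phi,\d,\s}^{-1}=\s\int_0^\infty r^{\s-1}\phi(r)\,dr$ recovers $|x|^{-\s}/\s=\g(x)$.

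For the borderline case $\s=0$, the integral $\int_{|x|/T}^{\infty}\phi(u)\,\frac{du}{u}$ is logarithmically divergent at $u=0$ due to $\phi(0)\neq 0$, which is why the renormalization $C_{\phi,T}$ is needed. I would integrate by parts, writing $du/u = d\log u$, to get
\[
\int_{\epsilon}^{M}\phi(u)\frac{du}{u} = \phi(M)\log M - \phi(\epsilon)\log\epsilon - \int_\epsilon^M \log(u)\phi'(u)\,du,
\]
then let $M\to\infty$ using the decay condition $\phi(r)\log r\to 0$ and the $L^1$ bound $\int_0^\infty|\log(r)\phi'(r)|dr<\infty$ from \eqref{eq:gintrep_phicon'} to conclude
\[
\int_\epsilon^\infty \phi(u)\frac{du}{u} = -\phi(\epsilon)\log\epsilon - \int_\epsilon^\infty\log(u)\phi'(u)\,du.
\]
Setting $\epsilon=|x|/T$ and using $\log\epsilon=\log|x|-\log T$ together with $(\phi(\epsilon)-\phi(0))\log\epsilon\to 0$ from \eqref{eq:gintrep_phicon'}, one finds
\[
\int_{|x|/T}^{\infty}\phi(u)\frac{du}{u} = -\phi(0)\log|x| + \phi(0)\log T - \int_0^\infty\log(u)\phi'(u)\,du + o(1),
\]
so subtracting $C_{\phi,T}=\phi(0)\log T-\int_0^\infty\log(r)\phi'(r)dr$ cancels the divergent pieces and leaves $-\phi(0)\log|x|+o(1)$. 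Multiplying by $\mathsf{c}_{\phi,\d,0}=1/\phi(0)$ yields $-\log|x|=\g(x)$, as desired.

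The only real obstacle is bookkeeping in the $\s=0$ case: one must confirm that the two boundary terms generated by the integration by parts are precisely canceled by $C_{\phi,T}$ and the $o(1)$ correction from replacing $\phi(\epsilon)$ by $\phi(0)$, for which the two limit conditions in \eqref{eq:gintrep_phicon'} are exactly the necessary hypotheses. No approximation, distributional identity, or Fourier argument is needed; everything is a one-variable computation justified by the stated integrability and boundary hypotheses on $\phi$.
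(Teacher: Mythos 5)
Your proposal is correct and follows essentially the same route as the paper: the change of variables $u=|x|/t$ reducing to a one-dimensional integral, convergence by hypothesis \eqref{eq:gintrep_phicon} when $\s>0$, and in the $\s=0$ case the same integration by parts with the boundary terms controlled by the two limit conditions in \eqref{eq:gintrep_phicon'} and canceled against $C_{\phi,T}$. No gaps.
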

\begin{proof}
When $\s>0$, the above integral clearly converges whenever $x\neq 0$. On the other hand, making the change of variables $r=|x|/t$ we find that
\begin{align}
\int_0^\infty t^{-\s}\phi(t^{-1}x)\frac{dt}{t}=\s \g(x)\int_0^\infty r^{\s}\phi(r)\frac{dr}{r},
\end{align}
which specifies $\mathsf{c}_{\phi,\d,\s}$. 

When $\s=0$, letting $r=t^{-1}|x|$ and integrating by parts, which is justified by our assumption $\phi(r)\log r\rightarrow 0$ as $r\rightarrow\infty$,
\begin{align}
\int_0^Tt^{-1}\phi(t^{-1}x)dt=\int_{|x|/T}^\infty \phi(r)\frac{dr}{r}=-\phi(|x|/T)\log(|x|/T)-\int_{|x|/T}^\infty \log(r)\phi'(r)dr.
\end{align}
Letting $T\rightarrow\infty$ and using the assumption that $\lim_{r\rightarrow 0^+}(\phi(r)-\phi(0))\log r=0$, we see that \eqref{eq:gintrep} holds with $c_{\phi,\d,0}=\frac{1}{\phi(0)}$.
\end{proof}

As mentioned in \cref{ssec:intropf}, the importance of the representation \eqref{eq:gintrep} is that it conveniently allows to define a truncation of $\g$ at small scales simply by truncating the integral for small values of $t$. This truncation well-approximates $\g$, preserves its repulsivity, and is controlled by $\g$.

More precisely, for $\eta>0$, we define the \emph{truncated potential at scale $\eta$} by
\begin{align}\label{eq:getadef}
\g_\eta(x) \coloneqq  \mathsf{c}_{\phi,\d,\s}\lim_{T\rightarrow\infty}\bigg(\int_\eta^T t^{\d-\s}\phi_t(x)\frac{dt}{t}-C_{\phi,T}\indic_{\s=0}\bigg),
\end{align}
where $C_{\phi,T}$ is as above. Note that $\g=\g_0$. In the sequel, it will be convenient to introduce the notation
\begin{align}\label{eq:fetaldef}
\f_\eta \coloneqq \g-\g_\eta =\mathsf{c}_{\phi,\d,\s}\int_0^\eta t^{\d-\s}\phi_t(x)\frac{dt}{t}. 
\end{align}
The reader may  check that the limit on the right-hand side of \eqref{eq:getadef} holds not only pointwise but also in the sense of tempered distributions. 

The following lemma establishes the key properties of the truncated potential under possibly additional assumptions on the scaling function $\phi$ compared to \cref{lem:gintrep}.

\begin{lemma}\label{lem:geta}
Let $\phi$ be as in \cref{lem:gintrep}. For $\eta>0$, the following assertions hold for a constant $C>0$ depending only on $\d,\s,\phi$.
\begin{enumerate}
\item\label{item:geta0'}
\begin{align}
\g_\eta(0)  =
\begin{cases}
\displaystyle\frac{\mathsf{c}_{\phi,\d,\s}\phi(0)}{\s}\eta^{-\s}, & {\s>0} \\
\displaystyle-\log \eta + \frac{1}{\phi(0)}\int_0^\infty \log (r) \phi'(r)dr, & {\s=0}.
\end{cases}
\end{align}
\item\label{item:geta0} If $\hat\phi\ge 0$ and $\phi\in L^1$, then for any test function $\varphi\ge 0$ (resp. such that $\varphi(0) = 0$ if $\s=0$), we have $\ipp{\widehat{\g_\eta},\varphi}\ge 0$.
\item\label{item:geta1} If $\phi\ge 0$ and $\phi$ is decreasing, then $\g_\eta(x)\leq C\max(\g(\eta),1)$. Moreover, if $\s>0$, then $\g_\eta \ge 0$.
\item\label{item:geta1'} Under the preceding assumptions, $\f_\eta(x)\ge C^{-1}{\g(x)}$ for any $|x|\le \eta$. Additionally, if $\s=0$, then $\f_\eta(x)\geq {\frac{\phi(1)}{\phi(0)}\g(\frac{x}{\eta})}$ and  {$|\f_\eta(x) -\g(\frac{x}{\eta})| \le C$ for all $|x|\le \eta$.}  

\item\label{item:geta2} Suppose further that $\phi(x)\le C_\ga\jp{x}^{-\ga}$ for $\ga>\s$. Then $0 \le \f_\eta(x) \le C_\ga{\frac{\eta^{\ga-\s}}{|x|^{\gamma}}}$, and consequently,
$\f_\eta(x) \leq  C_\ga\min(\eta^{-\s}\g(\frac{x}{\eta}),{\frac{\eta^{\ga-\s}}{|x|^{\ga}}})$.
\item\label{item:geta3} Suppose further that $\phi\in C^1(\R^\d\setminus\{0\})$ and there is $c >0$ such that $|x| |\nab\phi(x)| \le \phi(c x)$. Then $|x||\nabla \f_\eta(x)|\leq C \f_{\eta/c}(x)$.
\end{enumerate}
\end{lemma}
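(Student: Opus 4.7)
The core of the argument is to rewrite the defining integrals of $\g_\eta$ and $\f_\eta$ via the substitution $r=|x|/t$, which converts them into one-variable integrals in $r$ that are amenable to direct estimation. For $\s>0$ this yields the clean formula
\[
\f_\eta(x)=\g(x)\cdot\frac{\int_{|x|/\eta}^{\infty}r^{\s}\phi(r)\tfrac{dr}{r}}{\int_{0}^{\infty}r^{\s}\phi(r)\tfrac{dr}{r}},
\]
so in particular $0\le \f_\eta\le \g$ and $\g_\eta=\g-\f_\eta\ge 0$. For $\s=0$, the same substitution followed by an integration by parts on the interval $[|x|/T,|x|/\eta]$ (using the hypothesis $\int|\log(r)\phi'(r)|dr<\infty$ to control the boundary terms and the hypothesis $(\phi(r)-\phi(0))\log r\to 0$ at $r=0$ to cancel divergent pieces against $C_{\phi,T}$ as $T\to\infty$) produces the analogous formula $\f_\eta(x)=\phi(0)^{-1}\int_{|x|/\eta}^{\infty}\phi(r)\tfrac{dr}{r}$. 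These two explicit formulas drive the proof.

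\textbf{Parts (1), (3)--(6).} For (1), insert $x=0$ directly into the definition of $\g_\eta$ and integrate; in the $\s=0$ case the limit as $T\to\infty$ exists precisely by the design of the renormalizing constant $C_{\phi,T}$. For (3), the formulas for $\f_\eta$ show that $\g_\eta$ is a radial nonincreasing function of $|x|$ whenever $\phi$ is (since the integrand is), so $\g_\eta(x)\le \g_\eta(0)$, which gives the claimed bound via part (1). For (4), with $|x|\le \eta$ the ratio in the $\s>0$ formula is bounded below by $\int_{1}^{\infty}r^{\s}\phi\tfrac{dr}{r}/\int_{0}^{\infty}r^{\s}\phi\tfrac{dr}{r}>0$, yielding $\f_\eta\ge C^{-1}\g$; in the $\s=0$ case, restricting the integral to $r\in[|x|/\eta,1]$ and using $\phi\ge \phi(1)$ there immediately gives $\f_\eta(x)\ge (\phi(1)/\phi(0))\g(x/\eta)$, while the sharper $|\f_\eta(x)-\g(x/\eta)|\le C$ follows by writing $\phi=\phi(0)+(\phi-\phi(0))$ on $[|x|/\eta,1]$ and integrating the remainder by parts, where both the resulting boundary term and the integral $\int_{0}^{1}\log(r)\phi'(r)dr$ are $O(1)$ by the assumptions of \cref{lem:gintrep}. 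Part (5) follows by plugging $\phi(r)\lesssim \jp{r}^{-\ga}$ into the explicit formula and computing $\int_{|x|/\eta}^{\infty}r^{\s-\ga-1}dr\lesssim (|x|/\eta)^{\s-\ga}$, which after combining with the $\g(x)$ prefactor gives the $\eta^{\ga-\s}/|x|^{\ga}$ bound; the minimum statement then follows by comparing with $\f_\eta\le \g$. For (6), differentiating under the integral sign yields $\nab\f_\eta(x)=\mathsf{c}_{\phi,\d,\s}\int_{0}^{\eta}t^{-\s-2}\nab\phi(x/t)\,dt$; applying the hypothesis $|x/t||\nab\phi(x/t)|\le \phi(cx/t)$ pointwise and performing the substitution $s=t/c$ identifies the resulting integral as a constant multiple of $\f_{\eta/c}(x)$.

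\textbf{Part (2).} This is the most delicate piece. The starting point is that $\phi_t$ is positive definite (in Bochner's sense) since $\widehat{\phi_t}(\xi)=\hat\phi(t\xi)\ge 0$, so the nonneg weighted average $\g_\eta=\mathsf{c}_{\phi,\d,\s}\int_\eta^{\infty}t^{\d-\s}\phi_t\tfrac{dt}{t}$ is itself positive definite, whence its distributional Fourier transform is a nonnegative tempered distribution. For $\s>0$ this immediately gives $\jp{\widehat{\g_\eta},\varphi}\ge 0$ for $\varphi\ge 0$. In the log case one has to absorb the renormalizing constant $C_{\phi,T}$, whose Fourier transform is a multiple of $\delta_0$; equivalently, $\widehat{\g_\eta}$ carries a $|\xi|^{-\d}$-type singularity at the origin, and the hypothesis $\varphi(0)=0$ is precisely what guarantees the pairing remains a well-defined nonneg integral. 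The rigorous route is to truncate the $t$-integral at large $T$ (so that each truncation is a bona fide positive-definite $L^1$ convolution kernel), test against $\varphi$, and pass to the limit $T\to\infty$.

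The main obstacle is part (2): $\g_\eta$ lies in no $L^p$ in general (for $\s=0$ it does not even decay), so $\widehat{\g_\eta}$ must be interpreted purely distributionally, and one cannot simply invoke the Fubini-style exchange of $\int_{\eta}^{\infty}\tfrac{dt}{t}$ with the Fourier transform. The truncation-in-$T$ approximation together with the $\varphi(0)=0$ condition in the log case are what make the argument go through with only the stated minimal hypotheses $\hat\phi\ge 0$, $\phi\in L^1$.
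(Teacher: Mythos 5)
Your proposal is correct and, for the most part, follows the same route as the paper: the substitution $r=|x|/t$ to obtain explicit one-variable formulas for $\f_\eta$, direct comparison estimates for parts (4) and (5), differentiation under the integral for part (6), and the truncate-in-$T$/Fubini argument for part (2). The one place you deviate is part (3), where you observe that $\g_\eta$ inherits radial monotonicity from $\phi$ (the renormalization constant $C_{\phi,T}$ being $x$-independent does not disturb this), so $\g_\eta(x)\le\g_\eta(0)$ and the bound follows from part (1). This is cleaner than the paper's route, which for $\s>0$ amounts to the same thing via $\phi_t(x)\le t^{-\d}\|\phi\|_{L^\infty}$, but for $\s=0$ passes through the explicit formula \eqref{eq:getasum} and a case analysis in $|x|$ vs.\ $\eta$; your argument unifies the two cases.

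Two small imprecisions, neither of which affects correctness. In part (2), after subtracting $C_{\phi,T}$ the truncated kernel is no longer in $L^1$, so it is not quite a ``bona fide $L^1$ positive-definite kernel''; the right statement is that pairing $\g_\eta$ with $\hat\varphi$ produces, after Fubini on $[\eta,T]$, a nonnegative term plus $-C_{\phi,T}\varphi(0)$, and $\varphi(0)=0$ kills the latter before one sends $T\to\infty$. And in your preamble, the integration by parts on $[|x|/T,|x|/\eta]$ is needed to make sense of $\g_\eta$ in the $\s=0$ case, not to derive the formula $\f_\eta(x)=\phi(0)^{-1}\int_{|x|/\eta}^\infty\phi(r)\,dr/r$, which follows from the change of variables alone since the integral is over the bounded range $t\in(0,\eta)$. (Incidentally, your explicit formula for $\f_\eta$ in the $\s>0$ case, with $r^{\s}$ in the numerator, is the correct one; the paper contains a typo there.)
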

\begin{proof}
Starting with (\ref{item:geta0'}), observe that
\begin{align}
\frac{\g_\eta(0)}{\mathsf{c}_{\phi,\d,\s}} &=  \lim_{T\rightarrow\infty}\bigg(\phi(0)\int_\eta^T t^{-\s}\frac{dt}{t} - C_{\phi,T}\indic_{\s=0}\bigg)\nn\\
&=\lim_{T\rightarrow\infty}\bigg( \frac{\phi(0)(\eta^{-\s}-T^{-\s})}{\s}\indic_{\s>0} + \phi(0)\log(\frac{T}{\eta})\indic_{\s=0} - \big(\phi(0)\log(T)+\int_0^\infty\log(r)\phi'(r)dr\big)\indic_{\s=0}\bigg) \nn\\
&=\bigg(\frac{\phi(0)\eta^{-\s}}{\s}\indic_{\s>0} +\big(-\phi(0)\log\eta  + \int_0^\infty \log r \phi'(r)dr \big)\indic_{\s=0}\bigg). \label{eq:geta0form}
\end{align}
Substituting in the definition \eqref{eq:cphids_def} of $c_{\phi,\d,\s}$ then yields (\ref{item:geta0'}).

To see (\ref{item:geta0}), observe from Fubini-Tonelli and the identity $\wh{\phi_t} = \hat{\phi}(t\cdot)$ that for any $T>0$, the Fourier transform
\begin{align}
\int_{\R^\d}e^{-2\pi ix\cdot\xi}\int_{\eta}^T t^{\d-\s}\phi_t(x)\frac{dt}{t}dx = \int_{\eta}^T t^{\d-\s}\hat\phi(t \xi)\frac{dt}{t}.
\end{align}
Since $\hat{1} = \delta_0$ in the sense of distributions, it follows from the continuity of the Fourier transform with respect to distributional limits that
\begin{align}\label{eq:getahat}
\widehat{\g_\eta}(\xi) = \mathsf{c}_{\phi,\d,\s}\lim_{T\rightarrow\infty}\bigg(\int_\eta^T t^{\d-\s}\hat\phi(t\xi)\frac{dt}{t}-C_{\phi,T}\delta_0\indic_{\s=0}\bigg),
\end{align}
where the equality is understood in the sense of distributions. Evidently, the first term on the right-hand side is positive. Consequently, if $\s>0$, then for any test function $\varphi\ge 0$, we have $\ipp{\wh{\g_\eta},\varphi} \ge 0$. If $\s=0$, then the additional hypothesis $\varphi(0)=0$ kills off the contribution of the Dirac mass. This shows (\ref{item:geta0}).

Now consider (\ref{item:geta1}). If $\phi\ge 0$, then when $\s>0$, it is evident from the definition \eqref{eq:getadef} that $\g_\eta\ge 0$. Trivially bounding $\phi_t \le t^{-\d}\|\phi\|_{L^\infty}$, we also have
\begin{align}
\g_\eta(x) = \mathsf{c}_{\phi,\d,\s}\int_\eta^\infty t^{\d-\s}\phi_t(x)\frac{dt}{t} \le \|\phi\|_{L^\infty}\mathsf{c}_{\phi,\d,\s}\int_{\eta}^\infty t^{-\s}\frac{dt}{t} = \|\phi\|_{L^\infty}\mathsf{c}_{\phi,\d,\s}\g(\eta) .
\end{align}
When $\s=0$, note that by making the same change of variable and integrating by parts as in the proof of \cref{lem:gintrep}, 
\begin{align}
\int_\eta^Tt^{\d}\phi_t(x)\frac{dt}{t}-C_{\phi,T} &= \phi\left(\frac{|x|}{\eta}\right)\log \frac{|x|}{\eta} -\phi\left(\frac{|x|}{T}\right)\log\frac{|x|}{T} -\int_{|x|/T}^{|x|/\eta} \log(r)\phi'(r)dr  - C_{\phi,T}\nn\\
&= \phi\left(\frac{|x|}{\eta}\right)\log\frac{|x|}{\eta} -\phi(0)\log|x| - \Big(\phi\left(\frac{|x|}{T}\right)-\phi(0)\Big)\log\frac{|x|}{T} \nn\\
&\ph+ \int_{|x|/\eta}^\infty\log(r)\phi'(r)dr +  \int_0^{|x|/T}\log(r)\phi'(r)dr.
\end{align}
Letting $T\rightarrow\infty$, the last two terms on the preceding line vanish. Thus,
\begin{align}\label{eq:getasum}
\g_\eta(x) &= \frac{\phi(\frac{|x|}{\eta})}{\phi(0)}\log \frac{|x|}{\eta}+ \frac{1}{\phi(0)}\int_{|x|/\eta}^\infty\log(r)\phi'(r)dr  -\log|x|.
\end{align}
By assumption \eqref{eq:gintrep_phicon'} and the monotonicity of $\phi$,
\begin{align}
\frac{1}{\phi(0)}\int_{|x|/\eta}^\infty\log(r)\phi'(r)dr \le \frac{1}{\phi(0)}\int_{|x|/\eta}^{\min(|x|/\eta,1)}|\log(r)\phi'(r)|dr < \infty,
\end{align}
which takes care of the second term on the right-hand side of \eqref{eq:getasum}. If $|x|\ge \eta$, then the first is $\le \log\frac{|x|}{\eta}$ by our assumption that $\phi$ is decreasing, while the third term is trivially $\le -\log\eta$. Writing 
\begin{align}
\frac{\phi(\frac{|x|}{\eta})}{\phi(0)}\log \frac{|x|}{\eta} - \log|x| = \frac{\phi(\frac{|x|}{\eta}) - \phi(0)}{\phi(0)}\log \frac{|x|}{\eta} - \log\eta,
\end{align}
By our assumption that $(\phi(r)-\phi(0))\log r \rightarrow 0$ as $r\rightarrow 0$, implies that the first term is $O(1)$ for $|x|\le\eta$. 
The desired conclusion now follows from assembling all the preceding cases. This completes the argument for (\ref{item:geta1}).

Next, consider (\ref{item:geta1'}). Since $\phi$ is decreasing, for any $|x|\le \eta$, 
\begin{align}
\f_\eta(x) =\mathsf{c}_{\phi,\d,\s}\int_0^\eta t^{-\s}\phi\Big(\frac{x}{t}\Big)\frac{dt}{t}= \mathsf{c}_{\phi,\d,\s}|x|^{-\s}\int_{|x|/\eta}^\infty r^{-\s}  \phi(r)\frac{dr}{r}\geq \mathsf{c}_{\phi,\d,\s}|x|^{-\s}\int_1^\infty r^{-\s}  \phi(r)\frac{dr}{r},
\end{align}
where the second equality follows from the change of variable $r=|x| t^{-1}$. If $\s=0$, then \eqref{eq:getasum} implies that
\begin{align}
\f_\eta(x) = -\frac{\phi(\frac{|x|}{\eta})}{\phi(0)}\log \frac{|x|}{\eta} - \frac{1}{\phi(0)}\int_{|x|/\eta}^\infty\log(r)\phi'(r)dr.
\end{align}
Since $\phi$ is decreasing it thus holds that
\begin{align}
	\f_\eta(x)&\geq -\frac{\phi(\frac{|x|}{\eta})}{\phi(0)}\log \frac{|x|}{\eta}- \frac{\log(|x|/\eta)}{\phi(0)}\int_{|x|/\eta}^1\phi'(r)dr = -\frac{\phi(1)}{\phi(0)}\log \frac{|x|}{\eta}.
\end{align}
On the other hand, the triangle inequality implies that
\begin{align}
\sup_{|x|\le \eta}\Big|\f_\eta(x) +\log\frac{|x|}{\eta}\Big| \le \sup_{0<r\le 1}\frac{|\phi(r)-\phi(0)| |\log r|}{\phi(0)}  + \frac{1}{\phi(0)}\int_0^\infty|\log r| |\phi'(r)|dr <\infty.
\end{align}
This then completes the proof of (\ref{item:geta1'}).

For (\ref{item:geta2}), it is immediate from the definition of $\g_\eta$ and the assumption $\phi\ge 0$ that $\f_\eta\ge 0$. If $\s>0$, then since $\g_\eta \ge 0$, it also follows that $0\le \f_\eta(x)\leq \g(x)$.  Majorizing $|\phi_t(x)| \le C_\ga t^{-\d}\jp{x/t}^{-\ga}$, we have 
\begin{align}
\f_\eta(x)= \mathsf{c}_{\phi,\d,\s}\int_0^\eta t^{\d-\s}\phi_t(x)\frac{dt}{t} &\le   C_\ga\mathsf{c}_{\phi,\d,\s}\int_0^\eta t^{-\s} \jp{|x|/t}^{-\ga}\frac{dt}{t}  \nn\\
&= C_\ga\mathsf{c}_{\phi,\d,\s}|x|^{-\s}\int_{\frac{|x|}{\eta}}^\infty r^{\s}\jp{r}^{-\ga}\frac{dr}{r}  \nn\\
&\le \frac{C_\ga\mathsf{c}_{\phi,\d,\s}}{\ga-\s}|x|^{-\s} (|x|/\eta)^{\s-\ga},
\end{align}
where we have made the change of variable $r=t^{-1}|x|$ and used that the integral in $r$ converges since $\ga>\s$ by assumption. This establishes (\ref{item:geta2}).


Lastly, for (\ref{item:geta3}), we use the definition \eqref{eq:getadef} of $\g_\eta$ to see that
\begin{align}
|x||\nabla\f_\eta(x)|=|x|\bigg| \mathsf{c}_{\phi,\d,\s}\int_0^\eta t^{\d-\s}\nabla\phi_t(x)\frac{dt}{t}\bigg|\leq \mathsf{c}_{\phi,\d,\s}\int_0^\eta t^{-\s}|t^{-1}x||\nabla\phi(t^{-1}x)|\frac{dt}{t}.
\end{align}
By assumption, $|t^{-1}x||\nabla\phi(t^{-1}x)|\leq C\phi(ct^{-1}x)$, for $C>0$, and therefore,
\begin{align}
|x||\nabla\f_\eta(x)|\leq \mathsf{c}_{\phi,\d,\s}C\int_0^\eta t^{-\s}\phi(c t^{-1}x)\frac{dt}{t} &= c^{-\s}\mathsf{c}_{\phi,\d,\s}C\int_0^{\eta/c}t^{-\s}\phi(t^{-1}x)\frac{dt}{t} \nn\\
&= c^{-\s}\mathsf{c}_{\phi,\d,\s}C\f_{\eta/c}(x),
\end{align}
where the penultimate equality follows from the change of variable $t/c\mapsto t$. With this last assertion, the proof of the lemma is complete.    
\end{proof}

\begin{remark}\label{rem:phidec}
The assumption that $\phi$ is decreasing is not strictly necessary, but it does simplify the computations. In any case, it is satisfied by the specific choices of $\phi$ we will use later in this paper to obtain \cref{thm:FI}. 
\end{remark}

\begin{remark}\label{rem:ggetadiffft}
The identity \eqref{eq:getahat} shows that if $\hat{\phi}\ge 0$, then $\widehat{\f_\eta}\ge 0$ in the sense of distributions for all $0\le \s<\d$.
\end{remark}

Let us stress the importance of property (\ref{item:geta2}) of $\f_\eta$ to obtain the optimal additive error $\eta^{\d-\s}$. In the previous work \cite{NRS2021}, the standard mollification scheme used to define $\g_\eta$ only yields the bound $|\f_\eta (x)| \le C\frac{\eta^2}{|x|^{\s+2}}$. This issue of slow spatial decay was the main culprit behind the sub-optimality of the error estimates in that work. Note that for the potential truncation scheme used for the (super-)Coulomb case in \cite{PS2017, Serfaty2020, RS2022}, $\f_\eta(x)$ vanishes when $|x|\ge\eta$, so there was no comparable issue.

So far, we have been unspecific about the choice of $\phi$, only requiring that it satisfies certain properties. For later use in \cref{sec:rcom}, we will choose $\phi$ to be a Bessel potential, that is the fundamental solution of the inhomogeneous Fourier multiplier  whose symbol is given by $\jp{2\pi\xi}^{\as}$. In the next lemma, we recall some important properties of the Bessel potential. Some assertions of the lemma are already in \cite[Proposition 1.2.5]{Grafakos2014m}. We include a proof of them in \cref{ap:bessel} in an effort to make the present work self-contained.

\begin{lemma}\label{lem:bespot}
Let $\as>0$ and $G_\as(x)$ be defined by $\wh{G_\as}(\xi) = (1+4\pi^2|\xi|^2)^{-\as/2}$. Then $G_\as\in L^1(\R^\d) \cap C^\infty(\R^\d\setminus\{0\})$ is strictly positive, radial, and decreasing. $G_{\as}$ belongs to the H\"older-Zygmund space $\mathcal{C}^{\as-\d}$ and for any multi-index $\vec\al\in \N_0^\d$, we have for any $\epsilon>0$,
\begin{align}\label{eq:bespot1}
\forall |x|\ge 2, \qquad |\p_{\vec\al}G_\as(x)| \lesssim_{\ep} e^{-\frac{|x|}{4+\epsilon}},
\end{align}
\begin{align}\label{eq:bespot2}
\forall |x|\le 2, \qquad G_{\as}(x) \approx \begin{cases}1, & {\as>\d} \\ -\log\frac{|x|}{4}, & {\as = \d} \\ |x|^{\as-\d}, & {\as<\d}, \end{cases}
\end{align}
and
\begin{align}\label{eq:bespot2'}
\forall |x|\le 2, \qquad |\p_{\vec\al}G_{\as}(x)| \lesssim 1+|x|^{\as-|\vec\al|-\d} - \log(\frac{|x|}{2})\indic_{\as = |\vec\al|+\d}.
\end{align}
Lastly, for any $\epsilon\in(0,1)$
\begin{align}\label{eq:bespot3}
\forall |x|>0, \qquad |x|^{|\vec\al|} |\p_{\vec\al}G_\as(x)| \lesssim_\ep G_\as(\epsilon x)
\end{align}
Above, the implicit constants depend on $\d,\as,|\vec\al|$. 
\end{lemma}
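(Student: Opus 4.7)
My plan is to derive every assertion from one central tool, the subordination formula
\[
G_\as(x)=\frac{1}{(4\pi)^{\d/2}\Gamma(\as/2)}\int_0^\infty e^{-t-|x|^2/(4t)}\,t^{(\as-\d)/2}\,\frac{dt}{t},
\]
which follows by writing $(1+4\pi^2|\xi|^2)^{-\as/2}=\Gamma(\as/2)^{-1}\int_0^\infty e^{-t(1+4\pi^2|\xi|^2)}\,t^{\as/2}\,\frac{dt}{t}$ and inverse-Fourier-transforming the Gaussian $\xi\mapsto e^{-4\pi^2 t|\xi|^2}$ into $(4\pi t)^{-\d/2}e^{-|x|^2/(4t)}$. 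From this single representation, the soft properties are essentially automatic: strict positivity is manifest from the sign of the integrand; radial symmetry is manifest; $C^\infty$-smoothness on $\R^\d\setminus\{0\}$ follows since, once $|x|>0$, both endpoints of the $t$-integral (and every $x$-differentiation of it) are controlled by Gaussians; differentiating in $|x|$ produces a strictly negative integrand, giving the monotone decrease; and membership in $L^1(\R^\d)$ with unit mass is $\|G_\as\|_{L^1}=\wh{G_\as}(0)=1$. The Hölder--Zygmund statement $G_\as\in\mathcal{C}^{\as-\d}$ is classical for the Bessel potential, and can be quoted from \cite{Grafakos2014m}, or extracted \emph{a posteriori} from the size bounds \eqref{eq:bespot2}--\eqref{eq:bespot2'} together with the exponential tail \eqref{eq:bespot1} via a standard Littlewood--Paley decomposition.

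For the far-field estimate \eqref{eq:bespot1}, I would exploit the AM--GM inequality $t+|x|^2/(4t)\ge|x|$. Splitting the exponent as $(1-\delta)(t+|x|^2/(4t))+\delta(t+|x|^2/(4t))$ with $\delta=\delta(\ep)$ small and applying AM--GM only to the first summand pulls out a factor $e^{-(1-\delta)|x|}$; choosing $1-\delta>1/(4+\ep)$ yields the stated exponent. The residual integral $\int_0^\infty e^{-\delta t-\delta|x|^2/(4t)}\,t^{(\as-\d)/2-1}dt$, after the substitution $t=|x|s/(2\sqrt{\delta})$, is bounded uniformly for $|x|\ge 2$ by a constant depending only on $\delta,\as,\d$. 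For derivatives, writing $\p_{\vec\al}e^{-|x|^2/(4t)}=Q_{\vec\al}(x/\sqrt{t})\,t^{-|\vec\al|/2}e^{-|x|^2/(4t)}$ with $Q_{\vec\al}$ a Hermite-type polynomial of degree $|\vec\al|$, the same splitting works since any fraction of the Gaussian absorbs the polynomial growth.

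For the near-field behavior on $|x|\le 2$, the key trick is the rescaling $t=|x|^2 u$, which rewrites the subordination integral as
\[
G_\as(x)=\frac{|x|^{\as-\d}}{(4\pi)^{\d/2}\Gamma(\as/2)}\int_0^\infty e^{-|x|^2 u}e^{-1/(4u)}\,u^{(\as-\d)/2}\,\frac{du}{u}.
\]
When $\as<\d$, dominated convergence with envelope $e^{-1/(4u)}u^{(\as-\d)/2-1}$ (integrable because $e^{-1/(4u)}$ kills small $u$ and $\as<\d$ forces the exponent to be strictly less than $-1$ at infinity) shows the inner integral tends to a strictly positive finite limit as $|x|\to 0$, whence $G_\as(x)\asymp|x|^{\as-\d}$ near $0$; on the annulus $1\le|x|\le 2$, monotonicity and strict positivity give matching two-sided bounds. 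When $\as>\d$, the original subordination integral at $x=0$ equals $\Gamma((\as-\d)/2)<\infty$, so $G_\as$ is continuous and bounded near the origin. When $\as=\d$, splitting $\int_0^\infty=\int_0^1+\int_1^\infty$ in the rescaled integral and computing $\int_1^\infty e^{-|x|^2 u}u^{-1}du=-2\log|x|+O(1)$ as $|x|\to 0$ produces the asserted logarithm. The derivative bound \eqref{eq:bespot2'} is obtained from the same rescaling after differentiation; in particular, the borderline logarithm at $\as=|\vec\al|+\d$ arises exactly as in the $\as=\d$ case above.

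Finally, for \eqref{eq:bespot3}, the main observation is that the factor $|x|^{|\vec\al|}|Q_{\vec\al}(x/\sqrt{t})|\,t^{-|\vec\al|/2}$, being bounded by a polynomial in $|x|/\sqrt{t}$ of degree at most $2|\vec\al|$, is absorbed by any fraction of the Gaussian: $|y|^{2|\vec\al|}e^{-|y|^2/4}\lesssim_{\delta}e^{-(1-\delta)|y|^2/4}$ applied with $y=x/\sqrt{t}$, for any $\delta\in(0,1)$. This yields
\[
|x|^{|\vec\al|}|\p_{\vec\al}G_\as(x)|\lesssim_\delta\int_0^\infty e^{-t}e^{-(1-\delta)|x|^2/(4t)}\,t^{(\as-\d)/2}\,\frac{dt}{t},
\]
and recognizing the right-hand side as $(4\pi)^{\d/2}\Gamma(\as/2)\,G_\as(\ep x)$ with $\ep=\sqrt{1-\delta}\in(0,1)$ concludes the proof, since $\ep\in(0,1)$ is arbitrary. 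The main obstacle throughout is purely bookkeeping: each of the four regimes (far field, and near field with $\as<\d$, $\as=\d$, or $\as>\d$) requires its own analysis of the subordination integral, but once the rescaling $t=|x|^2u$ is identified, the computations become routine.
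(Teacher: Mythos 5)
Your proposal is correct and follows essentially the same route as the paper: the same subordination formula $G_\as(x)=\frac{1}{(4\pi)^{\d/2}\Gamma(\as/2)}\int_0^\infty e^{-t-|x|^2/(4t)}\,t^{(\as-\d)/2}\,\frac{dt}{t}$ is the central tool, the soft properties and the Hölder--Zygmund membership are handled the same way, the far-field bound comes from elementary manipulation of the exponent $t+|x|^2/(4t)$ with the Hermite-type polynomial absorbed into a relaxed Gaussian, the near-field asymptotics from the rescaling $t=|x|^2u$, and \eqref{eq:bespot3} from exactly the same absorption trick. The differences are only organizational (the paper splits the near-field integral into three $t$-ranges instead of rescaling globally, and relaxes the Gaussian constant $4\to c>4$ rather than splitting off a $(1-\delta)$-fraction of the exponent, which in your version even yields the stronger decay $e^{-(1-\delta)|x|}$), so this is the paper's proof up to minor bookkeeping.
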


\begin{remark}\label{rem:nabGsvan}
An examination of the proof of \cref{lem:bespot} reveals that if $\as>\d$, then for any $\epsilon>0$ the matrix field $x\otimes \nab G_\as(x)$ has an $(\as-\d-\epsilon)$-H\"older continuous extension that vanishes at $x=0$. As
\begin{align}
|x\otimes\nab\g_\eta(x) - y\otimes\nab\g_\eta(y)|  &=  \bigg|\mathsf{c}_{\phi,\d,\s}\int_\eta^\infty t^{-\s-2} \Big(x\otimes\nab G_\as(x/t ) - y\otimes\nab G_\as(y/t ) dt\bigg| \nn\\
&\le C_\epsilon \int_{\eta}^\infty t^{-\s-1} |(x-y)/t|^{\as-\d-\epsilon} dt \nn\\
&= C_{\epsilon,\as,\s,\d}\eta^{-\s-\as+\d+\ep}|x-y|^{\as-\d-\epsilon},
\end{align}
provided that $\s+\as-\d-\ep>0$, it follows that $x\otimes\nab\g_\eta$ also admits an $(\as-\d-\epsilon)$-H\"older continuous extension that vanishes at the origin, for any $\eta>0$.

\end{remark}

\begin{remark}\label{rem:getaBP}
In particular, \cref{lem:bespot} shows that assertions (\ref{item:geta0})-(\ref{item:geta1'}) of \cref{lem:geta} are satisfied when $\phi = G_\as$ for $\as>\d$. Furthermore, assertion (\ref{item:geta2}) holds for any $\ga>\s$, and assertion (\ref{item:geta3}) holds for any $c\in (0,1)$. 
\end{remark}

\subsection{Truncation for Riesz-type potentials}\label{ssec:MEpt'}
The identity \eqref{eq:gintrep} and the fact that the choice $\phi = G_\as$ satisfies all the assumptions of \cref{lem:geta} motivate our considering a class of \emph{Riesz-type} potentials, mentioned in the introduction, that have properties analogous to those of the exact Riesz potential.

\begin{mydef}\label{def:Rtype}
Let $\phi:\R^\d\rightarrow[0,\infty)$ satisfy all the conditions imposed in \Cref{lem:gintrep,lem:geta}. 

We say that a potential $\g:\R^\d\setminus\{0\}\rightarrow \mathbb{R}$ is \emph{$(\s,\phi)$-admissible} if $\g$ admits the representation
\begin{align}
\forall x\ne 0, \qquad \g(x) = \lim_{T\rightarrow\infty}\mathsf{c}_{\phi,\d,\s}\Bigg(\int_0^T \zeta(t) \phi_t(x)\frac{dt}{t}  -C_{\phi,T}\indic_{\s=0}\Bigg),
\end{align}
where $c_{\phi,\d,\s}, C_{\phi,T}$ are as in \eqref{eq:cphids_def}, \eqref{eq:CphiT_def}, respectively,  $\zeta: (0,\infty)\rightarrow [0,\infty)$ is a locally integrable function such that for some $C_{\zeta}>0$,
\begin{align}\label{eq:zetas>0}
\forall t>0, \qquad C_{\zeta}^{-1} t^{\d-\s} \le \zeta(t) \le C_\zeta t^{\d-\s},
\end{align}
and if  $\s=0$, then there is a locally integrable function $\rho: (0,\infty) \rightarrow [0,\infty)$ such that
\begin{align}\label{eq:zetas=0}
\zeta(t)= t^{\d} + \rho(t), \qquad \text{where} \ \int_0^\infty \rho(t) t^{-\d}\frac{dt}{t}<\infty.
\end{align}
When $\phi=G_\as$ for some $\as>0$, we simply say that $\g$ is \emph{$(\s,\as)$-admissible} and we let $\mathsf{c}_{\as,\d,\s}:= \mathsf{c}_{G_{\as},\d,\s}$.
\end{mydef}

Unless specified otherwise, $C_\zeta$ hereafter will  refer to the constant in \eqref{eq:zetas>0}. Whenever the notation $C_\zeta$ appears in the $\s=0$ case, one should simply take $C_\zeta = 1$.

To the best of our knowledge, this class of admissible potentials has not been considered in the literature at this level of generality. We note that when $\phi$ is Gaussian, our definition coincides with a special case of so-called \emph{G-type} potentials considered in \cite[Definition 10.5.]{BHS2019} and related works. The work \cite{NRS2021} considered a class of Riez-type potentials that satisfy a superharmonicity condition in a ball around the origin as well as satisfying pointwise physical and Fourier bounds comparable to an exact-Riesz potential. In the super-Coulomb case, similar assumptions are imposed but in a dimension-extended space, viewing the potential $\g$ as the restriction of some $\mathsf{G}$ to a lower-dimensional subspace. Similarly, the reader may check from \cref{def:Rtype} that an $(\s,\phi)$-admissible potential, for $\s>0$, satisfies pointwise bounds in physical and Fourier space comparable to an exact Riesz potential. In the case $\s=0$, an $(\s,\phi)$-admissible potential agrees with the $-\log|x|$ up to a more regular, positive-definite remainder. It is not clear that one definition is a proper subset of the other. In particular, the role of superharmonicity here is not apparent as it is in \cite{NRS2021}. Although the class in \cite{NRS2021} seems broader, in our opinion the generalization in the present work is more straightforward.

Let $\eta>0$. If $\s>0$, then we define $\g_\eta$ just as in \eqref{eq:getadef}. If $\s=0$, then we only need to truncate the contribution from $t^\d$ in the decomposition for $\zeta$. This is because the assumption $\int_0^\infty\rho(t)t^{-\d}\frac{dt}{t}<\infty$ ensures that $\int_0^\infty \rho(t)\phi_t(x)\frac{dt}{t}$ converges for any $x\in\R^\d$. More precisely, we define
\begin{align}\label{eq:getadef'}
\g_\eta(x) \coloneqq c_{\phi,\d,\s}\begin{cases}\displaystyle \int_\eta^{\infty} \zeta(t)\phi_t(x)\frac{dt}{t} , & {\s\ne 0} \\ \displaystyle \lim_{T\rightarrow\infty}\bigg(\int_\eta^T t^{\d}\phi_t(x)\frac{dt}{t} + \int_0^\infty \rho(t)\phi_t(x)\frac{dt}{t}-C_{\phi,T}\indic_{\s=0}\bigg) ,& {\s=0}. \end{cases}
\end{align}

To distinguish between the exact Riesz potential considered in the previous subsection and Riesz-type potentials, it will be convenient to introduce the notation
\begin{align}\label{eq:gRiesz}
\g_{\Rs}(x) \coloneqq \begin{cases}\frac{1}{\s}|x|^{-\s}, & {\s\ne 0} \\ -\log|x|, & {\s=0}.\end{cases}
\end{align}
Similarly, we write $\g_{\Rs,\eta}, \f_{\Rs,\eta}$ to denote $\g_{\eta}, \f_{\eta}$ in the exact Riesz case.

Analogous to \cref{lem:geta}, we have the following proposition, which is the main result of this subsection and underlies the results of \Cref{sec:ME,sec:rcom}.

\begin{prop}\label{prop:geta}
Let $\g$ be $(\s,\phi)$-admissible. The following assertions hold.
\begin{enumerate}
\item\label{item:'geta0'}
It holds that
\begin{align}
\begin{cases}
\displaystyle C_{\zeta}^{-1} \le  \frac{\s\g_\eta(0)}{\mathsf{c}_{\phi,\d,\s}\phi(0)\eta^{-\s}} \leq C_\zeta, & {\s>0} \\
\displaystyle \g_\eta(0) = -\log \eta + \frac{1}{\phi(0)}\int_0^\infty(\log(t)\phi'(t) + \rho(t)t^{-\d-1}\phi(0))dt, & {\s=0}.
\end{cases}
\end{align}
\item\label{item:'geta0} For any test function $\varphi\ge 0$ (resp. such that $\varphi(0) = 0$ if $\s=0$), we have $\ipp{\widehat{\g_\eta},\varphi}\ge 0$.
\item\label{item:'geta1} $\g_\eta(x)\leq CC_\zeta\max(\g(\eta),1)$. Moreover, if $\s>0$, then $\g_\eta \ge 0$.
\item\label{item:'geta1'} $\f_\eta(x)\ge CC_\zeta^{-1}|x|^{-s}$ for any $|x|\le \eta$. Additionally, if $\s=0$, then $\f_\eta(x)\geq -\frac{\phi(1)}{\phi(0)}\log(\frac{|x|}{\eta})$ and $|\f_\eta(x) + \log \frac{|x|}{\eta}| \le C$ for all $|x|\le \eta$.
\item\label{item:'geta2} For $\ga>\s$, we have $0\leq \f_\eta(x)\leq  C_\ga C_\zeta {\frac{\eta^{\ga-\s}}{|x|^{\ga}}}$. Consequently,  $\f_\eta(x) \leq  C_\ga C_\zeta\min(\eta^{-\s}\g(\frac{x}{\eta}),{\frac{\eta^{\ga-\s}}{|x|^{\ga}}})$.
\item\label{item:'geta3} $|x||\nabla\f_\eta(x)|\leq C_\zeta^2\f_{\eta/c}(x)$ for some $c>0$. 
\end{enumerate}
In all cases, the constant $C>0$ depends only on $\d,\s,\phi$. In particular, all the assertions hold, for any $\ga >\d$ in (\ref{item:'geta2}) and any $c \in (0,1)$ in (\ref{item:'geta3}), if $\g$ is $(\s,\as)$-admissible for $\as>\d$.  
\end{prop}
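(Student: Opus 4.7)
The plan is to reduce each of the six assertions to its exact-Riesz counterpart in \cref{lem:geta} by exploiting the two-sided bound $C_\zeta^{-1}t^{\d-\s}\le \zeta(t)\le C_\zeta t^{\d-\s}$ from \eqref{eq:zetas>0} when $\s>0$, and the decomposition $\zeta(t)=t^{\d}+\rho(t)$ from \eqref{eq:zetas=0} when $\s=0$. In the latter case the $t^{\d}\phi_t(x)$ piece is \emph{exactly} the integrand that defines $\g_{\Rs,\eta}$ for $\s=0$, while the $\rho$-piece produces a convergent, $\eta$-independent correction; almost every step below reduces to this decomposition plus a direct substitution into the proofs of \cref{lem:geta}.

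For (\ref{item:'geta0'}) I would write $\g_\eta(0)=\mathsf{c}_{\phi,\d,\s}\phi(0)\int_\eta^\infty \zeta(t)t^{-\d-1}dt$ (with the $T\to\infty$ renormalization from \eqref{eq:getadef'} when $\s=0$) and sandwich the integral using $\zeta(t)\approx t^{\d-\s}$; for $\s=0$ the $t^{\d}$-piece reproduces $\g_{\Rs,\eta}(0)$ from \cref{lem:geta}(\ref{item:geta0'}), and the $\rho$-piece contributes the finite term $\int_0^\infty\rho(t)t^{-\d-1}dt$, matching the stated formula after dividing by $\phi(0)$. Assertion (\ref{item:'geta0}) follows from the Fourier computation of \cref{lem:geta}(\ref{item:geta0}) verbatim, since taking Fourier transforms gives
\begin{equation*}
\widehat{\g_\eta}(\xi)=\mathsf{c}_{\phi,\d,\s}\lim_{T\to\infty}\bigg(\int_\eta^T \zeta(t)\hat\phi(t\xi)\tfrac{dt}{t}+\int_0^\infty \rho(t)\hat\phi(t\xi)\tfrac{dt}{t}\indic_{\s=0}-C_{\phi,T}\delta_0\indic_{\s=0}\bigg),
\end{equation*}
and $\zeta,\rho,\hat\phi$ are nonnegative while the Dirac term is killed by the assumption $\varphi(0)=0$ when $\s=0$. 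For (\ref{item:'geta1}), nonnegativity when $\s>0$ is immediate from $\zeta\ge 0$, and the upper bound follows by substituting $\zeta(t)\le C_\zeta t^{\d-\s}$ and quoting \cref{lem:geta}(\ref{item:geta1}); for $\s=0$ the $\rho$-piece is uniformly controlled by $\|\phi_t\|_{L^\infty}\le \phi(0)t^{-\d}$ and $\int_0^\infty \rho(t)t^{-\d-1}dt<\infty$.

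Assertions (\ref{item:'geta1'})--(\ref{item:'geta3}) are handled in the same spirit: in (\ref{item:'geta1'}) I would use $\zeta(t)\ge C_\zeta^{-1}t^{\d-\s}$ (respectively $\zeta(t)\ge t^\d$ for $\s=0$) inside $\f_\eta$ and invoke \cref{lem:geta}(\ref{item:geta1'}), and the $\s=0$ comparison $|\f_\eta(x)+\log(|x|/\eta)|\le C$ follows from the same uniform $\rho$-bound used above; in (\ref{item:'geta2}) and (\ref{item:'geta3}), substituting $\zeta(t)\le C_\zeta t^{\d-\s}$ into the integrals produces one extra factor of $C_\zeta$ on top of the constants from \cref{lem:geta}(\ref{item:geta2})--(\ref{item:geta3}), and a factor of $C_\zeta^2$ for (\ref{item:'geta3}) since one must also use the decay hypothesis on $\phi$ to bound $|\nabla\phi|$ by a translate of $\phi$. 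The main obstacle is the bookkeeping in the $\s=0$ case: because $\rho$ is only integrable against $t^{-\d-1}dt$ (and not $t^{-\s-1}dt$ with positive $\s$), the $T\to\infty$ limit and the renormalization constant $C_{\phi,T}$ must be tracked separately for the $t^{\d}$ and $\rho$ pieces; this is routine once the decomposition is consistently applied. The final claim about $(\s,\as)$-admissible $\g$ with $\as>\d$ is then immediate: \cref{lem:bespot} and \cref{rem:getaBP} verify that $\phi=G_\as$ satisfies every hypothesis of \cref{def:Rtype}, and in particular that (\ref{item:'geta2}) holds for any $\ga>\d$ and (\ref{item:'geta3}) for any $c\in(0,1)$.
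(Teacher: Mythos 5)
Your proposal is correct and follows essentially the same route as the paper: for $\s>0$ you sandwich $\g_\eta$ and $\f_\eta$ between $C_\zeta^{\mp1}$ times their exact-Riesz counterparts and quote \cref{lem:geta}, while for $\s=0$ the split $\zeta(t)=t^{\d}+\rho(t)$ gives $\g_\eta-\g_{\Rs,\eta}=\int_0^\infty\rho(t)\phi_t\,\frac{dt}{t}$ and $\f_\eta=\f_{\Rs,\eta}$, exactly as in the paper's argument. Two harmless slips worth noting: since the $\rho$-part is never truncated in \eqref{eq:getadef'}, $\f_\eta$ coincides \emph{exactly} with $\f_{\Rs,\eta}$ when $\s=0$, so no uniform $\rho$-bound is needed in assertion (\ref{item:'geta1'}); and the second factor of $C_\zeta$ in assertion (\ref{item:'geta3}) arises from converting $\f_{\Rs,\eta/c}\le C_\zeta\f_{\eta/c}$ back to the general potential, not from the hypothesis bounding $|\nabla\phi|$ by a translate of $\phi$.
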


\begin{proof}
The assertions essentially follow from the same reasoning as in the proof of \cref{lem:geta}. We briefly sketch the details.

Consider first the case $\s>0$. Then by the assumption \eqref{eq:zetas>0} for $\zeta$, it follows that
\begin{align}
C_\zeta^{-1}\g_{\Rs,\eta}(x) = \frac{c_{\phi,\d,\s}}{C_\zeta}\int_\eta^{\infty} t^{\d-s}\phi_t(x)\frac{dt}{t} \le \g_\eta(x) =  c_{\phi,\d,\s}\int_\eta^{\infty} \zeta(t)\phi_t(x)\frac{dt}{t} &\le C_\zeta c_{\phi,\d,\s}\int_\eta^\infty t^{\d-\s}\phi_t(x)\frac{dt}{t}\nn\\
& = C_\zeta\g_{\Rs,\eta}(x).
\end{align}
Similarly,
\begin{align}
C_\zeta^{-1}\f_{\Rs,\eta}(x) = \frac{c_{\phi,\d,\s}}{C_\zeta}\int_0^\eta t^{\d-\s}\phi_t(x)\frac{dt}{t} \le \f_{\eta}(x) = c_{\phi,\d,\s}\int_0^\eta \zeta(t)\phi_t(x)\frac{dt}{t}  &\le C_\zeta c_{\phi,\d,\s}\int_0^\eta t^{\d-\s}\phi_t(x)\frac{dt}{t} \nn\\
& = C_\zeta\f_{\Rs,\eta}(x).
\end{align}
We can now appeal to the corresponding assertions for $\g_{\Rs,\eta}, \f_{\Rs,\eta}$ from \cref{lem:geta} to conclude the proposition in the case $\s>0$, with the exception of assertion (\ref{item:'geta3}). For this, we may simply bound
\begin{align}\label{eq:xnabf1}
|x||\nabla\f_\eta(x)| \le  \mathsf{c}_{\phi,\d,\s}\int_0^\eta \zeta(t) |x||\nabla\phi_t(x)|\frac{dt}{t} &\le C_\zeta \mathsf{c}_{\phi,\d,\s}\int_0^\eta t^{-\s}|t^{-1}x||\nabla\phi(t^{-1}x)|\frac{dt}{t} \nn\\
&\le C C_\zeta c^{-\s}\f_{\Rs,\eta/c}(x) \le C C_\zeta^2 c^{-\s}\f_{\eta/c}(x).
\end{align}
where the final inequality follows from proceeding exactly as in the proof of \cref{lem:geta}(\ref{item:geta3}). The desired conclusion now follows. 
and 

For the case $\s=0$, we observe from assumption \eqref{eq:zetas=0} and the definition \eqref{eq:getadef'} of $\g_\eta$ that
\begin{align}
\g_{\eta}(x) - \g_{\Rs,\eta}(x) &= \int_0^\infty\rho(t)\phi_t(x)\frac{dt}{t}, \label{eq:s=01} \\
\f_\eta(x) &= \int_0^\eta t^{\d}\phi_t(x)\frac{dt}{t} = \f_{\Rs,\eta}(x). \label{eq:s=02}
\end{align}
The relation \eqref{eq:s=01} gives assertions (\ref{item:'geta0'}), (\ref{item:'geta0}), (\ref{item:'geta1}); and the relation \eqref{eq:s=02} yields the assertions (\ref{item:'geta1'}), (\ref{item:'geta2}), (\ref{item:'geta3}) for $\f_\eta$. This then completes the proof of the proposition. 

\end{proof}

By \cref{lem:geta}(\ref{item:geta2}) and H\"older's inequality, we have the following convolution bounds, which will be used in \cref{ssec:MEmon} to control the additive errors.

\begin{lemma}\label{lem:feta}
Let $\g$ be $(\s,\phi)$-admissible. Let $p \in (\frac{\d}{\d-\s}, p]$. There exists $C = C(\d,\s,p,\phi) > 0$, such that for $\eta\ge 0$,
\begin{align}
\|\f_\eta\ast\mu\|_{L^\infty} \le C C_\zeta\|\mu\|_{L^{p}}\eta^{\frac{\d(p-1)}{p}-\s}, \label{eq:fetaHol} \\
\| |x||\nab \f_\eta|\ast \mu\|_{L^\infty} \le C {C_\zeta}\|\mu\|_{L^{p}}\eta^{\frac{\d(p-1)}{p}-\s}. \label{eq:nabfetaHol}
\end{align}
\end{lemma}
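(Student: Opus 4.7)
The plan is to bound $\|\f_\eta\|_{L^{p'}}$, where $p'$ is the H\"older conjugate of $p$, and then deduce both estimates from the pointwise H\"older inequality $|\f_\eta \ast \mu(x)| \le \|\f_\eta\|_{L^{p'}}\|\mu\|_{L^{p}}$, valid for every $x\in\R^\d$. The case $\eta = 0$ is trivial since $\f_0 \equiv 0$, so fix $\eta > 0$ throughout.

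To estimate $\|\f_\eta\|_{L^{p'}}$, I would split $\R^\d$ into $B(0,\eta)$ and its complement, applying the two pointwise bounds of \cref{prop:geta}(\ref{item:'geta2}) in each region. On $B(0,\eta)$, the bound $\f_\eta(x) \lesssim C_\zeta \eta^{-\s}\g(x/\eta)$ reduces, when $\s>0$, to $\f_\eta(x) \lesssim C_\zeta |x|^{-\s}$, so
\[
\int_{B(0,\eta)} \f_\eta(x)^{p'}\,dx \lesssim C_\zeta^{p'}\int_{B(0,\eta)} |x|^{-\s p'}\,dx \sim C_\zeta^{p'}\eta^{\d - \s p'},
\]
the integral converging precisely because $\s p' < \d$, i.e. because of the hypothesis $p > \d/(\d-\s)$; the log case $\s=0$ is identical, using that $\log|x/\eta|$ is locally $L^{p'}$ for any finite $p'$. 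On the exterior, I would apply the complementary bound $\f_\eta(x) \lesssim_\ga C_\zeta \eta^{\ga-\s}/|x|^{\ga}$ with any $\ga > \d/p'$ (permissible since \cref{prop:geta}(\ref{item:'geta2}) allows any $\ga > \d$), yielding another contribution of order $C_\zeta^{p'}\eta^{\d - \s p'}$. Taking the $1/p'$-th power produces $\|\f_\eta\|_{L^{p'}} \le C C_\zeta\,\eta^{\d/p' - \s}$, which is \eqref{eq:fetaHol} since $\d/p' = \d(p-1)/p$.

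For \eqref{eq:nabfetaHol}, the pointwise estimate \cref{prop:geta}(\ref{item:'geta3}) furnishes $|x||\nabla \f_\eta(x)| \le C_\zeta^{2}\f_{\eta/c}(x)$ for some $c\in(0,1)$ independent of $\eta$, so this second bound follows immediately by applying the first estimate \eqref{eq:fetaHol} to $\f_{\eta/c}$; extra powers of $c$ and $C_\zeta$ are absorbed into the overall constant.

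There is no genuine obstacle; the argument is essentially a H\"older-plus-scaling computation keyed to \cref{prop:geta}. The only step requiring care is bookkeeping the integrability thresholds on the two pieces of the split---the near-origin regime demands $p > \d/(\d-\s)$ (exactly the hypothesis) and the tail is controlled by choosing $\ga$ large---both of which are dictated by the scaling identity $\d/p' = \d(p-1)/p$.
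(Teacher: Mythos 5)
Your argument is correct and is essentially the paper's own proof: both split at scale $\eta$, use the two pointwise bounds of \cref{prop:geta}(\ref{item:'geta2}) (the $|x|^{-\s}$-type bound near the origin, the $\eta^{\ga-\s}|x|^{-\ga}$ tail bound with $\ga>\d$ outside), apply H\"older with exponent $p$, and deduce \eqref{eq:nabfetaHol} from \eqref{eq:fetaHol} via \cref{prop:geta}(\ref{item:'geta3}). The only cosmetic difference is that you estimate $\|\f_\eta\|_{L^{p'}}$ once and apply a single H\"older inequality, whereas the paper applies H\"older separately on the two regions (retaining the flexibility of two exponents $p_1,p_2$ when $\ga\le\d$), which yields the identical scaling $\eta^{\frac{\d(p-1)}{p}-\s}$.
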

\begin{proof}
For $x\in\R^\d$, we decompose
\begin{align}
\int_{\R^\d}\f_\eta(x-y)d\mu(y) &= \int_{|x-y|\le \eta/2}\f_\eta(x-y)d\mu(y) + \int_{|x-y|>\eta/2}\f_\eta(x-y)d\mu(y) \nn\\
&\le CC_\zeta\eta^{-\s}\int_{|x-y|\le \eta/2}\g\left(\frac{x-y}{\eta}\right)d\mu(y) \nn\\
&\ph + C C_\zeta\int_{|x-y|>\eta/2}|x-y|^{-\s} (|x-y|/\eta)^{\s-\ga}d\mu(y) \nn\\
&\le C'C_\zeta \eta^{\frac{\d(p_1-1)}{p_1}}\Bigg(1  +\indic_{\s=0}\Big(\int_0^1 |\log r|^{\frac{p_1}{p_1-1}}dr \Big)^{\frac{p_1-1}{p_1}}\Bigg)\|\mu\|_{L^{p_1}} \nn\\
&\ph+ C' C_\zeta\eta^{\frac{\d(p_2-1)}{p_2}-\s}\Big(\int_{1}^\infty r^{-\frac{\s p_2}{p_2-1}} r^{\frac{p_2(\s-\ga)}{p_2-1}}dr\Big)^{\frac{p_2-1}{p_2}} \|\mu\|_{L^{p_2}} \nn\\
&= C''C_\zeta\Big(\|\mu\|_{L^{p_1}}\eta^{\frac{\d(p_1-1)}{p_1}-\s}+ \|\mu\|_{L^{p_2}}\eta^{\frac{\d(p_2-1)}{p_2}-\s}\Big),
\end{align}
where we have used \cref{prop:geta}(\ref{item:'geta2}) and the applications of H\"older's inequality are valid provided that $\frac{\d}{\d-\s}<p_1\le \infty$ and $1\le p_2 < \frac{\d}{\d-\ga}$, with the convention that $p_2=\infty$ is allowed if $\ga>\d$. In particular, if $\ga>\d$, then we may take $p_1=p_2$. Similarly,  using the estimates  \cref{prop:geta}(\ref{item:geta3}) and then applying the previous estimate \eqref{eq:fetaHol} (assuming $\eta \le \frac{c}{2}$), for any $x\in\R^\d$, 
\begin{align}
\int_{\R^\d}|x-y| |\nab\f_\eta(x-y)|d\mu(y) \le C {C_\zeta}\int_{\R^\d}\f_{\R,\eta/c}(x-y)d\mu(y) \le C' C_\zeta^2\|\mu\|_{L^p}\eta^{\frac{\d(p-1)}{p}-\s}, 
\end{align}
again under the same assumptions on $p$.
\end{proof}

\subsection{Energy control}\label{ssec:MEmon}
Throughout this subsection, we assume that $\g$ is $(\s,\phi)$-admissible where the function $\phi$ satisfies all the assumptions imposed in \cref{prop:geta}.

When $0<\s<\d$, then it is immediate from basic potential theory (e.g. see \cite[Remark 2.5]{RS2021}) that $\g\ast\mu \in C_0(\R^\d)$ (it is actually $C^{k,\alpha}$ for some $k\in\N_0$ and $\alpha>0$ depending on the value of $\s$) and therefore the modulated energy is well-defined. If $\s= 0$, then we need to impose a suitable decay assumption on $\mu$ to compensate for the logarithmic growth of $\g$ at infinity. {The energy condition $\int_{(\R^\d)^2}|\g|(x-y)d|\mu|^{\otimes 2}(x,y)<\infty$ from the statement of \cref{thm:FI} suffices.}

Our main proposition states that the modulated energy controls both the truncated Riesz energy of $\frac{1}{N}\sum_{i=1}^N\delta_{x_i}-\mu$ and the difference between the microscopic energies associated to $\g$ and $\g_\eta$, up to an $O(\eta^{\d-\s})$ additive error. As previously remarked, the size of this error is in general optimal. The reader should compare this proposition to \cite[Proposition 2.1]{NRS2021} in the sub-Coulomb case and \cite[Proposition 2.1]{RS2022} in the Coulomb/super-Coulomb case.

\begin{prop}\label{prop:MEmon}
Let $\mu \in L^1(\R^\d)\cap L^p(\R^\d)$ for $\frac{\d}{\d-\s}<p\le \infty$ with $\int_{\R^\d}\mu=1$. Suppose further that $\int_{(\R^\d)^2}\g(x-y)d\mu^{\otimes 2}(x,y)<\infty$ and $\int_{(\R^\d)^2}|\g|(x-y)d|\mu|^{\otimes 2}(x,y)<\infty$ if $\s=0$.  Let $\ux_N \in (\R^\d)^N$ be a pairwise distinct configuration. Then
\begin{multline}\label{eq:MEmon}
\frac{1}{2N^2}\sum_{1\le i\ne j\le N}\f_\eta(x_i-x_j) + \frac12\int_{(\R^\d)^2} \f_\eta(x-y)d\mu^{\otimes2}(x,y)+\frac12\int_{(\R^\d)^2}\g_\eta(x-y)d\Big(\frac1N\sum_{i=1}^N\delta_{x_i}-\mu\Big)^{\otimes 2} \\
\leq \Fr_N(X_N,\mu) + \frac{(\g_{\Rs}(\eta) + C+ C_{\rho})}{2N}\indic_{\s=0} + C C_\zeta\frac{\g_{\Rs}(\eta)}{2N}\indic_{\s>0} +CC_\zeta\|\mu\|_{L^p}\eta^{\frac{\d(p-1)}{p}-\s},  
\end{multline}
where $C = C(\d,\s,p,\phi)>0$  and $C_{\rho}\ge 0$, where $C_{\rho} = 0$ when $\rho=0$.
\end{prop}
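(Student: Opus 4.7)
The plan is to start from the identity
\[
\Fr_N(\ux_N,\mu)=\tfrac12\int_{(\R^\d)^2\setminus\triangle}\g_\eta(x-y)\,d\nu_N^{\otimes 2}+\tfrac12\int_{(\R^\d)^2\setminus\triangle}\f_\eta(x-y)\,d\nu_N^{\otimes 2},
\]
where $\nu_N\coloneqq \frac1N\sum_{i=1}^N\delta_{x_i}-\mu$, simply by splitting $\g=\g_\eta+\f_\eta$. The strategy is to reinterpret each piece so the three quantities on the left of \eqref{eq:MEmon} appear explicitly, with the difference forming a controllable error.

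For the $\g_\eta$ piece, since $\g_\eta(0)$ is finite by \cref{prop:geta}(\ref{item:'geta0'}), I reinsert the diagonal. Because $\mu$ is absolutely continuous, only the discrete self-interactions $\frac{1}{N^2}\sum_i \g_\eta(0)$ contribute, giving
\[
\tfrac12\int_{(\R^\d)^2\setminus\triangle}\g_\eta(x-y)\,d\nu_N^{\otimes 2}=\tfrac12\int_{(\R^\d)^2}\g_\eta(x-y)\,d\nu_N^{\otimes 2}-\frac{\g_\eta(0)}{2N}.
\]
The first term on the right is precisely the one I want to retain on the left of \eqref{eq:MEmon}; note however that I need not (and do not) use its nonnegativity here, which is a separate consequence of \cref{prop:geta}(\ref{item:'geta0}) applied to the test function $|\widehat{\nu_N}|^2$ (recall $\widehat{\nu_N}(0)=0$, killing any Dirac mass at the origin in $\widehat{\g_\eta}$ in the $\s=0$ case). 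The remaining $\g_\eta(0)/(2N)$ is estimated using \cref{prop:geta}(\ref{item:'geta0'}): for $\s>0$ it is bounded by $CC_\zeta\g_{\Rs}(\eta)/(2N)$, while for $\s=0$ it equals $(\g_{\Rs}(\eta)+C+C_\rho)/(2N)$, matching the stated error terms exactly.

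For the $\f_\eta$ piece, expanding $\nu_N^{\otimes 2}$ and using that $\mu$ is absolutely continuous so the diagonal cuts nothing from the $\mu\otimes\mu$ and mixed contributions gives
\[
\tfrac12\int_{(\R^\d)^2\setminus\triangle}\f_\eta(x-y)\,d\nu_N^{\otimes 2}=\frac{1}{2N^2}\sum_{i\ne j}\f_\eta(x_i-x_j)+\tfrac12\int\f_\eta\,d\mu^{\otimes 2}-\frac{1}{N}\sum_{i=1}^N(\f_\eta\ast\mu)(x_i).
\]
The first two terms are the remaining pieces I want on the left of \eqref{eq:MEmon}. The cross term is bounded by $\|\f_\eta\ast\mu\|_{L^\infty}$, which \cref{lem:feta} controls by $CC_\zeta\|\mu\|_{L^p}\eta^{\frac{\d(p-1)}{p}-\s}$ for any $p>\frac{\d}{\d-\s}$. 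Assembling the pieces, moving the three ``good'' quantities to one side and the error terms to the other, yields \eqref{eq:MEmon}.

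I do not anticipate a serious obstacle in this argument; the only subtle point is that the manipulations leading to the reinsertion of the diagonal for $\g_\eta$ and the splitting of $\f_\eta$ require all integrals to be absolutely convergent, which in the $\s=0$ case relies on the finite energy assumption $\int|\g|(x-y)\,d|\mu|^{\otimes 2}<\infty$ together with the pointwise domination $\f_\eta\le C\,\g_{\Rs}$ near the origin from \cref{prop:geta}(\ref{item:'geta1'}) and the decay $\f_\eta(x)\lesssim\eta^{\ga-\s}|x|^{-\ga}$ at infinity from \cref{prop:geta}(\ref{item:'geta2}). These ensure Fubini applies throughout.
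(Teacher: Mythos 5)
Your argument is correct and is essentially the paper's own proof: split $\g=\g_\eta+\f_\eta$ inside $\Fr_N$, reinsert the diagonal for the $\g_\eta$ piece at the cost of $\g_\eta(0)/(2N)$ (controlled via \cref{prop:geta}(\ref{item:'geta0'})), expand the $\f_\eta$ piece into the discrete sum, the $\mu^{\otimes 2}$ term, and the cross term, and bound the cross term by $\|\f_\eta\ast\mu\|_{L^\infty}$ via \cref{lem:feta}. The additional remark on absolute convergence in the $\s=0$ case is a fine (if implicit in the paper) justification, but introduces nothing new to the approach.
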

\begin{proof}
Unpacking the definition \eqref{eq:modenergy} of $\Fr_N(\ux_N,\mu)$, inserting the identity $\g=\f_\eta + \g_\eta$, and then expanding, we find that
\begin{multline}
\Fr_N(X_N,\mu) =  \frac{1}{2N^2}\sum_{1\le i\ne j\le N} \f_{\eta}(x_i-x_j)  + \frac12\int_{(\R^\d)^2}\f_{\eta}(x-y)d\mu^{\otimes2}(x,y) \\
 + \frac12\int_{(\R^\d)^2}\g_\eta(x-y)d\Big(\frac1N\sum_{i=1}^N \delta_{x_i}-\mu\Big)^{\otimes 2}(x,y) -  \frac{1}{2N} \g_{\eta}(0) {-}\frac{1}{N}\sum_{i=1}^N\int_{\R^\d} \f_{\eta}(x-x_i)d\mu(x).
\end{multline}
Applying the estimate \cref{lem:feta}\eqref{eq:fetaHol} to the ultimate term and \cref{prop:geta}(\ref{item:'geta0'})  to the penultimate term on preceding the right-hand side, the desired conclusion follows. 
\end{proof}


\begin{remark}
At the risk of being obvious, let us note that the three terms on the left-hand side of the inequality \eqref{eq:MEmon} are nonnegative, since $\f_\eta\ge 0$ and $\g_\eta$ is positive definite when integrated against zero mean test functions.
\end{remark}

\begin{remark}\label{rem:MElb}
In particular, for the exact Riesz case, choosing $\eta = \la= (N\|\mu\|_{L^p})^{-\frac{p}{\d(p-1)}}$, we find the lower bound
\begin{align}
\Fr_N(X_N,\mu) -\frac{\log\la}{2N}\indic_{\s=0} + C\|\mu\|_{L^p}\la^{\d-\s} \ge 0,
\end{align}
where $C = C(\d,\s,p)>0$, showing the almost positivity of the modulated energy with the optimal size of additive error when $p=\infty$.\footnote{The value of the sharp constant $C$ is still an open question.}
\end{remark}

As a corollary of \cref{prop:MEmon}, we control the small-scale interaction (cf. \cite[Corollary 3.4]{Serfaty2023}, \cite[Proposition 2.3]{RS2022}). 

\begin{cor}\label{cor:MEcount}
Letting $\la \coloneqq (N\|\mu\|_{L^p})^{-\frac{p}{\d(p-1)}}$, introduce the nearest-neighbor type distance
\begin{align}\label{eq:rsi_def}
\rs_i \coloneqq \frac14\min\Big(\min_{1\le j\le N: j\ne i} |x_i-x_j|, \la\Big).
\end{align}
Under the same assumptions as \cref{prop:MEmon}, there exists $C= C(\d,\s,p,\phi)>0$, such that for every $\eta\le\la$,
\begin{multline}\label{eq:MEcount2}
	\Fr_N(X_N,\mu) + {\frac{(\g_{\Rs}(\eta) + C + C_{\rho})}{2N}\indic_{\s=0} + C C_{\zeta}\frac{\g_{\Rs}(\eta)}{2N}\indic_{\s>0}} +CC_\zeta\|\mu\|_{L^p}\eta^{\frac{\d(p-1)}{p}-\s} \\
	\ge \begin{cases}\displaystyle \frac{1}{2N^2}\sum_{i=1}^N \g(4\rs_i), & {\s >0} \\ \displaystyle \frac{1}{2N^2}\sum_{i=1}^N \g(4\rs_i/\eta), & {\s=0},\end{cases}
\end{multline}
and
\begin{multline}\label{eq:MEcount1}
	\Fr_N(X_N,\mu) + {\frac{(\g_{\Rs}(\eta) + C + C_{\rho})}{2N}\indic_{\s=0} + CC_{\zeta}\frac{\g_{\Rs}(\eta)}{2N}\indic_{\s>0}} +CC_\zeta\|\mu\|_{L^p}\eta^{\frac{\d(p-1)}{p}-\s} \\
	\ge \begin{cases}\displaystyle\frac{1}{2C C_\zeta N^2}\sum_{\substack{1\le i\ne j \le N \\ |x_i-x_j|\le \eta}} |x_i-x_j|^{-\s}, & {\s> 0} \\ \displaystyle\frac{1}{2CN^2}\sum_{\substack{1\le i\ne j \le N \\ |x_i-x_j|\le \eta}} -\log\left(\frac{x_i-x_j}{\eta}\right) , & {\s=0} \end{cases}
\end{multline}
where $C_{\rho}$ is as above.
\end{cor}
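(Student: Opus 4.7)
The plan is to derive both inequalities from \cref{prop:MEmon} by lower bounding the nonnegative sum $\frac{1}{2N^2}\sum_{1\le i\ne j\le N}\f_\eta(x_i-x_j)$ appearing on the left-hand side of \eqref{eq:MEmon}. The key pointwise input is the estimate $\f_\eta(x)\ge CC_\zeta^{-1}|x|^{-\s}$ valid for $|x|\le\eta$ (and its logarithmic analogue $\f_\eta(x)\ge -\tfrac{\phi(1)}{\phi(0)}\log(|x|/\eta)$ when $\s=0$) from \cref{prop:geta}(\ref{item:'geta1'}).

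For \eqref{eq:MEcount1}, I would simply restrict the double sum to pairs with $|x_i-x_j|\le\eta$, drop the remaining nonnegative terms, and apply the pointwise lower bound to each retained pair. Combined with \cref{prop:MEmon}, this yields \eqref{eq:MEcount1} directly after absorbing the constant $CC_\zeta^{-1}$ into the $\frac{1}{2CC_\zeta N^2}$ factor on the right-hand side.

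For \eqref{eq:MEcount2}, for each $i$ I would select a nearest neighbor $j_i$ of $x_i$ (any one if non-unique) and split the target sum according to whether $4\rs_i<\eta$ or $4\rs_i\ge\eta$. Since the pairs $(i,j_i)$ are distinct terms in the full double sum and $\f_\eta\ge 0$, nonnegativity gives
\begin{align}
\sum_{i:\,4\rs_i<\eta}\f_\eta(x_i-x_{j_i})\ \le\ \sum_{i\ne j}\f_\eta(x_i-x_j),
\end{align}
and on the first group \cref{prop:geta}(\ref{item:'geta1'}) produces a lower bound proportional to $\g(4\rs_i)$ for $\s>0$ (resp. $\g(4\rs_i/\eta)$ for $\s=0$, where the logarithmic scaling is built into the bound). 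For the second group, when $\s>0$ monotonicity of $\g_\Rs$ gives $\g(4\rs_i)\le\g_\Rs(\eta)$ uniformly in $i$, so this group contributes at most $\frac{1}{2N}\g_\Rs(\eta)$, exactly matching (up to a constant) the additive error $CC_\zeta\g_\Rs(\eta)/(2N)$ already present in \cref{prop:MEmon}; when $\s=0$, the terms $\g(4\rs_i/\eta)=-\log(4\rs_i/\eta)$ are nonpositive and may simply be dropped.

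The only mildly delicate point is the constant bookkeeping: tracking the factor $CC_\zeta$ from \cref{prop:geta}(\ref{item:'geta1'}) so that it absorbs cleanly into the generic constants on both sides (in particular the explicit $CC_\zeta$-denominator on the right of \eqref{eq:MEcount1}), and ensuring the $\s=0$ case carries the rescaled argument $4\rs_i/\eta$ rather than $4\rs_i$ so that the logarithmic bound of \cref{prop:geta}(\ref{item:'geta1'}) plugs in correctly. Once these are handled, the conclusion follows immediately from \cref{prop:MEmon}.
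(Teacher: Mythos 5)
Your proof of \eqref{eq:MEcount1} is exactly the paper's argument: starting from \cref{prop:MEmon}, restrict $\frac{1}{2N^2}\sum_{i\ne j}\f_\eta(x_i-x_j)$ to pairs with $|x_i-x_j|\le\eta$ and apply \cref{prop:geta}(\ref{item:'geta1'}); the multiplicative constant you lose there is harmless because \eqref{eq:MEcount1} carries the factor $\frac{1}{2CC_\zeta N^2}$ explicitly on its right-hand side.

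For \eqref{eq:MEcount2}, however, your route does not give the statement as written, and this is a genuine gap. The claimed inequality has the bare coefficient $\frac{1}{2N^2}$ in front of $\sum_i\g(4\rs_i)$ and coefficient $1$ in front of $\Fr_N$; all losses must be pushed into the \emph{additive} error terms. Your lower bound on the nearest-neighbor terms uses \cref{prop:geta}(\ref{item:'geta1'}), which only yields $\f_\eta(x)\ge c\,C_\zeta^{-1}|x|^{-\s}$ with a constant $c$ strictly smaller than what reproduces $\g_{\Rs}$ exactly, plus a further factor $C_\zeta^{-1}$ to pass from $|x|^{-\s}$ to the admissible $\g$ (and in the log case only the factor $\frac{\phi(1)}{\phi(0)}<1$). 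Since the quantity $\Fr_N+\text{errors}$ is merely nonnegative, this multiplicative constant cannot be divided back out: you end up proving \eqref{eq:MEcount2} with $\frac{1}{2N^2}$ replaced by something like $\frac{1}{2CC_\zeta^2N^2}$, a strictly weaker inequality, and the loss persists even in the exact Riesz case $C_\zeta=1$. The paper avoids the pointwise bound entirely here: on the nearest-neighbor pair it keeps the exact decomposition $\f_\eta(x_i-x_{j_i})=\g(x_i-x_{j_i})-\g_\eta(x_i-x_{j_i})\ge\g(4\rs_i)-\g_\eta(0)$, using only that $\g_\eta$ is radially decreasing, so $\g(4\rs_i)$ is retained with constant one and the loss is purely additive, namely $\g_\eta(0)\le CC_\zeta\g_{\Rs}(\eta)$ per point by \cref{prop:geta}(\ref{item:'geta0'}) (with the $C+C_\rho$ bookkeeping of \eqref{eq:MEss3} when $\s=0$), which is exactly what the error term on the left of \eqref{eq:MEcount2} absorbs; the points with $4\rs_i\ge\eta$ are handled by the same additive comparison since then $\g(4\rs_i)\le C_\zeta\g_{\Rs}(\eta)$. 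A further small slip in your $\s=0$ case: for a general $(0,\phi)$-admissible $\g$ the terms $\g(4\rs_i/\eta)$ with $4\rs_i\ge\eta$ are not nonpositive (that is true only for $-\log$ itself), merely bounded by a constant depending on $\rho$, so they too must be absorbed into the additive error rather than dropped.
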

\begin{proof}
Let $\eta \le \lambda$. Discarding the (nonnegative) second and third terms on the left-hand side of \eqref{eq:MEmon} and unpacking the definition of $\f_\eta$, we find that
\begin{multline}\label{eq:MEss0}
\frac{1}{2N^2}\sum_{i=1}^N \sum_{1\le j\le N: |x_i-x_j| \le \eta} (\g(x_i-x_j) - \g_\eta(x_i-x_j)) \le \frac{1}{2N^2}\sum_{1\le i\ne j\le N} \f_\eta(x_i-x_j) \\
\le \Fr_N(X_N,\mu) + {\frac{(\g_{\Rs}(\eta) + C + C_{\rho})}{2N}\indic_{\s=0} + CC_{\zeta}\frac{\g_{\Rs}(\eta)}{2N}\indic_{\s>0}} +CC_\zeta\|\mu\|_{L^p}\eta^{\frac{\d(p-1)}{p}-\s}.
\end{multline}
For each $1\le i\le N$, either $\rs_i = \frac14\min_{j\ne i}|x_i-x_j|$, in which case there exists $j\ne i$ such that $\rs_i = \frac14|x_i-x_j|$, or $\rs_i = \frac14\lambda$. In the former case, for such $j$, we have
\begin{align}
\g(x_i-x_j) - \g_\eta(x_i-x_j) \geq \g(4r_i)-\g_\eta(0),
\end{align}
since $\g_\eta$ is decreasing. In the latter case, for every $j\ne i$,
\begin{align}
\g(x_i-x_j) - \g_\eta(x_i-x_j) \ge 0 \ge  \g(\la) - \g(\eta) = \g(4\rs_i)-\g(\eta).
\end{align}
provided that $\eta\le \la$.  Thus, in all cases,
\begin{align}\label{eq:MEss1}
\sum_{1\le j\le N: |x_i-x_j| \le \eta}  (\g(x_i-x_j) - \g_\eta(x_i-x_j)) \ge \g(4\rs_i)-\max(\g(\eta),\g_\eta(0)).
\end{align} 
By \cref{prop:geta}(\ref{item:'geta0'}) and \cref{prop:geta}(\ref{item:'geta0}), when $\s>0$ we have that
\begin{align}\label{eq:MEss2}
|\max(\g(\eta),\g_\eta(0))| \le C_\zeta C \g_{\Rs}(\eta).
\end{align}
On the other hand, when $\s=0$, by \cref{prop:geta}(\ref{item:'geta0'}) and the definition of $\g$ being $(0,\phi)$-admissible,
\begin{align}\label{eq:MEss3}
	\g(4\rs_i)-\max(\g(\eta),\g_\eta(0))\geq \g(4\rs_i/\eta)- (C+C_\rho),
\end{align}
where $C_\rho=0$ if $\rho=0$.  Applying the estimates \eqref{eq:MEss1}, \eqref{eq:MEss2}, and \eqref{eq:MEss3} to our starting relation \eqref{eq:MEss0}, we arrive at \eqref{eq:MEcount2} after rearranging.

On the other hand, by \cref{prop:geta}(\ref{item:'geta1'}), $ \f_\eta(x) \ge C C_\zeta^{-1}|x|^{-\s}$ if $\s>0$ and $\f_\eta(x)\geq -C\log(|x|/\eta)$ if $\s=0$ for all $|x|\leq \eta$. Applying this inequality to \eqref{eq:MEss0} above, we obtain \eqref{eq:MEcount1}.
\end{proof}

By following the proof of \cite[Proposition 2.5]{RS2022}, one may also bound the number of points whose nearest-neighbor distance is at mesoscales. As we do not need such an estimate, we leave the details as an exercise for the reader.


\subsection{Coercivity}\label{ssec:MEcoer}
We show in this subsection that the modulated energy associated to an $(\s,\as)$-admissible potential is coercive, provided $\as$ is sufficiently large, in the sense that it controls a squared negative-order Sobolev norm. In particular, this yields that vanishing of the modulated energy implies weak convergence of the empirical measure to the target measure $\mu$. Such a coercivity estimate is already known in the sub-Coulomb case \cite[Proposition 2.4]{NRS2021} (see also \cite[Corollary 4.22]{SerfatyLN} for a coercivity estimate in the Coulomb/super-Coulomb case), but its $N$-dependent additive error is much larger than that shown in the lemma below. Moreover, the estimate presented below is sharper compared to the previous works in that the control is $H^{-\frac{\d}{2}-\varepsilon}$, for any $\varepsilon>0$, which is the best one can hope for given the Dirac mass is not in $H^{-\frac{\d}{2}}$. 

\begin{prop}\label{prop:coer}
Let $\g$ be $(\s,\phi)$-admissible, such that $\hat\phi(\xi)\ge C_{\phi}\jp{\xi}^{-r}$ for some $r>\d$. Suppose $\frac{\d}{\d-\s}<p\le \infty$. There exists a constant $C=C(\d,\s,r,p,\phi)>0$ such that the following holds: for any $\mu\in L^1(\R^\d)\cap L^p(\R^\d)$ with $\int_{\R^\d}d\mu = 1$ and $\int_{(\R^\d)^2}|\g|(x-y)d|\mu|^{\otimes 2}(x,y)<\infty$ if $\s=0$, $\la \le 1$, and any pairwise distinct configuration $X_N\in (\R^\d)^N$, it holds that
\begin{multline}\label{eq:coer}
\left\|\frac1N\sum_{i=1}^N \delta_{x_i}-\mu\right\|_{H^{-r/2}}^2  \le C C_\zeta\Big(\Fr_N(\ux_N,\mu) +  \frac{(\g_{\Rs}(\la) +C+ C_{\rho})}{2N}\indic_{\s=0} \\
+ C C_\zeta\frac{\g_{\Rs}(\la)}{2N}\indic_{\s>0} +C C_\zeta\|\mu\|_{L^p}\la^{\frac{\d(p-1)}{p}-\s}\Big),
\end{multline}
where $C_{\rho}$ is as in \cref{prop:MEmon}. In particular, if $\g$ is $(\s,\as)$-admissible for $\as>\d$, then \eqref{eq:coer} holds with $r=\as$; and if $\g$ is $\s$-Riesz, then \eqref{eq:coer} holds for any $r>\d$. 
\end{prop}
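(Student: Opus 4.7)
The strategy is to exploit the positive-definiteness of the truncated potential $\g_\eta$ in Fourier space and compare $\widehat{\g_\eta}$ pointwise from below with the symbol $\jp{2\pi\xi}^{-r}$ defining the $H^{-r/2}$ norm. Let $f \coloneqq \frac{1}{N}\sum_{i=1}^N\delta_{x_i}-\mu$; this is a finite signed measure of zero total mass, so $\hat f \in C_b(\R^\d)$ with $\hat f(0)=0$, and our assumptions on $\mu$ ensure the pairing $\int \g_\eta(x-y)df^{\otimes 2}(x,y)$ is well-defined.

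First I would combine the Fourier-side representation of $\g_\eta$ (from the distributional computation \eqref{eq:getahat}, extended to the Riesz-type case via \eqref{eq:getadef'}) with $\widehat{\phi_t}(\xi)=\hat\phi(t\xi)$ and Fubini (justified by nonnegativity of $\zeta,\rho,\hat\phi$) to obtain
\begin{equation*}
\int_{(\R^\d)^2}\g_\eta(x-y)df^{\otimes 2}(x,y) \ge \mathsf{c}_{\phi,\d,\s}C_\zeta^{-1}\int_\eta^\infty t^{\d-\s}\int_{\R^\d}\hat\phi(t\xi)|\hat f(\xi)|^2 d\xi\,\frac{dt}{t}.
\end{equation*}
In the $\s=0$ case, the Dirac-mass subtraction $-\mathsf{c}_{\phi,\d,0}C_{\phi,T}\delta_0$ appearing in $\widehat{\g_\eta}$ contributes zero to the pairing because $\hat f(0)=0$, and the additional $\rho$-contribution in the Riesz-type case is nonnegative and can be discarded.

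The technical heart is the pointwise Fourier lower bound $\widehat{\g_\eta}(\xi)\gtrsim \jp{\xi}^{-r}$, uniformly in $\eta\le 1$. Substituting $\hat\phi(t\xi)\ge C_\phi\jp{t\xi}^{-r}$ and making the change of variable $u = t|\xi|$, the relevant inner integral reduces to $|\xi|^{-(\d-\s)}\int_{\eta|\xi|}^\infty u^{\d-\s}\jp{u}^{-r}\frac{du}{u}$, whose unrestricted version is a finite positive constant since $r>\d>\d-\s$. I split into cases $\eta|\xi|\le 1$ versus $\eta|\xi|\ge 1$: in the first regime the tail integral is bounded below by a fixed positive constant, giving $\widehat{\g_\eta}(\xi)\gtrsim|\xi|^{-(\d-\s)}$, which dominates $\jp{\xi}^{-r}$ both for $|\xi|\le 1$ (trivially) and for $|\xi|\ge 1$ (since $r\ge \d-\s$); in the second regime, using $\jp{u}\le \sqrt{2}u$ for $u\ge 1$ gives $\widehat{\g_\eta}(\xi)\gtrsim \eta^{\d-\s-r}|\xi|^{-r}\gtrsim \jp{\xi}^{-r}$, since $\eta\le 1$ and $r>\d-\s$ force $\eta^{\d-\s-r}\ge 1$.

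Combining these with Plancherel $\|f\|_{H^{-r/2}}^2 = \int_{\R^\d}\jp{2\pi\xi}^{-r}|\hat f(\xi)|^2 d\xi$ and using $\jp{2\pi\xi}\ge \jp{\xi}$, one deduces $\|f\|_{H^{-r/2}}^2 \le C\int_{(\R^\d)^2}\g_\eta(x-y)df^{\otimes 2}(x,y)$. Setting $\eta=\la$ and applying \cref{prop:MEmon}, after discarding the two nonnegative terms $\frac{1}{2N^2}\sum_{i\ne j}\f_\eta(x_i-x_j)$ and $\frac12\int\f_\eta d\mu^{\otimes 2}$ on the left-hand side of \eqref{eq:MEmon}, yields \eqref{eq:coer}. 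The final assertions follow from \cref{lem:bespot}: for $\phi=G_\as$ with $\as>\d$, $\widehat{G_\as}(\xi)=\jp{2\pi\xi}^{-\as}\ge (4\pi^2)^{-\as/2}\jp{\xi}^{-\as}$, so the hypothesis holds with $r=\as$; specializing to exact Riesz, any $\as>\d$ is admissible. The main effort lies in the Fourier lower bound; the only real subtlety is justifying the $\s=0$ distributional manipulation, which is handled cleanly by the vanishing $\hat f(0)=0$.
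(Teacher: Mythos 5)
Your proposal is correct and follows essentially the same route as the paper: pass to Fourier space via the representation of $\g_\eta$ (with the Dirac renormalization killed by $\hat f(0)=0$ when $\s=0$ and the $\rho$-part discarded by positivity), bound $\hat\phi$ below by $C_\phi\jp{\cdot}^{-r}$, deduce that the truncated energy dominates $\|f\|_{H^{-r/2}}^2$, and conclude with \cref{prop:MEmon} at $\eta=\la$. The only difference is bookkeeping: you integrate in $t$ first (change of variable $u=t|\xi|$, case split on $\eta|\xi|\lessgtr 1$) to get a pointwise lower bound on $\widehat{\g_\eta}(\xi)$, while the paper splits the $t$-integral at $t=1$ and factors out $\|f\|_{H^{-r/2}}^2$ directly; both yield the same constant structure.
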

\begin{proof}
Recalling the identity \eqref{eq:getahat}, which holds with $t^{\d-\s}$ replaced by $\zeta(t)$ in the general case, and using that the Fourier transform of $f\coloneqq \frac1N\sum_{i=1}^N\delta_{x_i} - \mu$ vanishes at the origin, we have that
\begin{align}\label{eq:coer0}
\int_{(\R^\d)^2}\g_\eta(x-y)d\Big(\frac1N\sum_{i=1}^N\delta_{x_i}-\mu\Big)^{\otimes2}(x,y) =  \mathsf{c}_{\phi,\d,\s}\int_\eta^\infty \zeta(t)\int_{\R^\d}\hat\phi(t\xi) |\hat{f}(\xi)|^2 d\xi \frac{dt}{t},
\end{align}
where we have also used Fubini-Tonelli to interchange the order of integration. For $\eta \le t\le 1$, we bound $\hat\phi(t\xi) \ge C_\phi \jp{\xi}^{-r}$; and for $t\ge 1$, we bound $\hat\phi(t\xi)\ge C_\phi t^{-r}\jp{\xi}^{-r}$. Applying these lower bounds to the right-hand side of \eqref{eq:coer0}, we obtain
\begin{align}
&\frac{1}{\mathsf{c}_{\phi,\d,\s} C_{\phi}}\int_{(\R^\d)^2}\g_\eta(x-y)d\Big(\frac1N\sum_{i=1}^N\delta_{x_i}-\mu\Big)^{\otimes2}(x,y)\nn\\
 &\ge \int_{\eta}^1 \zeta(t)\int_{\R^\d}\jp{\xi}^{-r}|\hat{f}(\xi)|^2 d\xi \frac{dt}{t}  + \int_1^\infty \zeta(t)t^{-r}\int_{\R^\d}\jp{\xi}^{-r}|\hat{f}(\xi)|^2 d\xi \frac{dt}{t} \nn\\
&=\Bigg(\int_\eta^1\zeta(t)\frac{dt}{t} + \int_1^{\infty}\zeta(t)t^{-r}\frac{dt}{t}\Bigg)\| f\|_{H^{-\frac{r}{2}}}^2, \label{eq:coer1}
\end{align}
which requires that 
$r>\d$ for the integral in $\xi$ to be finite ($r>\d$ is also necessary for $\phi$ to be bounded, continuous, as assumed in \cref{prop:MEmon}). Bounding the left-hand side of \eqref{eq:coer1} from above using the estimate \eqref{eq:MEmon} from \cref{prop:MEmon} with the choice $\eta = \la$, the desired conclusion follows after a little bookkeeping.
\end{proof}

\begin{remark}
If $\g$ is $(\s,\phi)$-admissible where we simply assume that $\hat\phi>0$, then one can still prove that $\Fr_N(\ux_N,\mu)\rightarrow 0$ implies that $\frac1N\sum_{i=1}^N \delta_{x_i} \rightharpoonup \mu$ as $N\rightarrow\infty$. We leave the proof of this assertion as an exercise for the reader.
\end{remark}

\section{Kato-Ponce commutator estimates}\label{sec:KP}
In this section, we prove some commutator estimates for (fractional) powers of the inhomogeneous Fourier multiplier $\jp{\nabla} = (I-\Delta)^{1/2}$. These types of estimates are usually referred to as Kato-Ponce commutator estimates, originating in \cite{KP1988}. See also \cite{Taylor1991, KPV1993, Taylor2003, GO2014, Li2019, LS2020} for further developments and alternative proofs. We refer to \cite{Li2019,LS2020} for a more thorough discussion of the history. Later in \cref{sec:rcom}, we will combine these commutator estimates with the potential truncation procedure and the modulated energy bounds of \cref{sec:ME} to prove \cref{thm:FI'}.

The main results of this section are the following proposition and corollary. 

\begin{prop}\label{prop:comm}
Let $\al\ge 0$ and write $\al=2m+r$ for integer $m\ge 0$ and $r\in (0,2]$. There exists a constant $C = C(\d,\alpha)>0$ such that for any $v,f\in\Sc(\R^\d)$,
\begin{align}\label{eq:propcomm}
\bigg| \int_{\R^\d} v\cdot \nabla f \jp{\nab}^\al f\bigg|\leq CA_{v,\al}\|f\|_{{H}^{\frac{\alpha}{2}}}^2,
\end{align}
where
\begin{multline}\label{eq:Avaldef}
A_{v,\al} \coloneqq \|\nab v\|_{L^\infty} + \sum_{j=0}^{m-1}\Big(\|\Dm^{j}\nab v\|_{L^{\max(\frac{\d}{j},2)}}\indic_{j\ne \frac{\d}{2}} +\|\Dm^{j}\nab v\|_{L^{2+}}\indic_{j= \frac{\d}{2}} \\
+ \|\Dm^{\frac{r}{2}+j} \nab v\|_{L^{\max(\frac{2\d}{2j+r},2)}}\indic_{j+\frac{r}{2}\ne \frac{\d}{2}}    + \|\Dm^{\frac{r}{2}+j} \nab v\|_{L^{2+}}\indic_{j+\frac{r}{2}= \frac{\d}{2}}\Big).
\end{multline}
Here, the summation is understood as vacuous if $m<1$.

\end{prop}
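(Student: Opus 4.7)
The plan is to induct on $m$, using the decomposition $\alpha=2m+r$ with $r\in(0,2]$ and the identity $\jp{\nabla}^\alpha=\jp{\nabla}^{\alpha-2}(I-\Delta)$ to peel off a factor of $I-\Delta$ per iteration. Writing $J^{\sigma}:=\jp{\nabla}^\sigma$, the repeated algebraic move is to use self-adjointness of $J^{\sigma/2}$ together with $[J^{\sigma/2},\partial_j]=0$, which splits the integrand into a ``good'' symmetric part bounded by $\|\nabla v\|_{L^\infty}$ times an energy, plus a commutator remainder estimated via a Kato--Ponce bound from \cite{Li2019}.

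For the base case $m=0$, $\alpha\in(0,2]$, one has $A_{v,\alpha}=\|\nabla v\|_{L^\infty}$. The split
$$
\int v\cdot\nabla f\, J^\alpha f=\int v\cdot\nabla(J^{\alpha/2}f)\cdot J^{\alpha/2}f+\int[J^{\alpha/2},v_j]\partial_j f\cdot J^{\alpha/2}f
$$
sends the first term to $-\tfrac12\int(\nabla\cdot v)(J^{\alpha/2}f)^2\le\tfrac12\|\nabla v\|_{L^\infty}\|f\|_{H^{\alpha/2}}^2$ after integration by parts. By Cauchy--Schwarz, the second term reduces to showing $\|[J^{\alpha/2},v_j]\partial_j f\|_{L^2}\lesssim\|\nabla v\|_{L^\infty}\|f\|_{H^{\alpha/2}}$. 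For $\alpha/2\in(0,1)$ this is a first-order Calder\'on-type commutator bound obtainable from \cite{Li2019}; equivalently, the inhomogeneous Caffarelli--Silvestre extension \cref{lem:CSinhomext} realizes $J^{\alpha/2}$ as a weighted Dirichlet-to-Neumann operator in $\R^{\d+1}_+$, and an integration by parts in the extended variable turns the commutator into a weighted bulk integral of the form $\int\!\!\int z^{1-\alpha}(\nabla\cdot v)|\nabla_{x,z}U|^2$, which only needs $\|\nabla v\|_{L^\infty}$ to control against the extension energy $\|f\|_{H^{\alpha/2}}^2$. The endpoint $\alpha=2$ is direct since $J^\alpha=I-\Delta$ is local.

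In the inductive step $m\ge 1$, assume the claim at level $\alpha-2$. The first integral in
$$
\int v\cdot\nabla f\,J^\alpha f=\int v\cdot\nabla f\,J^{\alpha-2}f-\int v\cdot\nabla f\,\Delta J^{\alpha-2}f
$$
is controlled by the induction hypothesis and the monotonicity $A_{v,\alpha-2}\le A_{v,\alpha}$, $\|f\|_{H^{(\alpha-2)/2}}\le\|f\|_{H^{\alpha/2}}$. Integrating by parts in one copy of $\partial_k$ inside $\Delta=\partial_k\partial_k$ expresses the second integral, with $g_\ell:=\partial_\ell f$, as
$$
\sum_k\int v\cdot\nabla g_k\,J^{\alpha-2}g_k+\sum_{j,k}\int(\partial_k v_j)g_j\,J^{\alpha-2}g_k.
$$
The first sum is controlled termwise by the induction hypothesis, using $\sum_k\|g_k\|_{H^{(\alpha-2)/2}}^2\le\|f\|_{H^{\alpha/2}}^2$. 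For the second sum, self-adjointness of $J^{(\alpha-2)/2}$ allows us to write
$$
\int(\partial_k v_j)g_j J^{\alpha-2}g_k=\int(\partial_k v_j)(J^{(\alpha-2)/2}g_j)(J^{(\alpha-2)/2}g_k)+\int[J^{(\alpha-2)/2},\partial_k v_j]g_j\cdot J^{(\alpha-2)/2}g_k;
$$
the leading piece is $\le\|\nabla v\|_{L^\infty}\|f\|_{H^{\alpha/2}}^2$ by H\"older, and the commutator remainder is handled by Li's Kato--Ponce estimate with H\"older-dual exponents $(p,q)$, $1/p+1/q=1/2$, chosen so that Sobolev embedding turns $\|\partial_j f\|_{L^q}$ into $\|f\|_{H^{\alpha/2}}$ and the companion norm $\|\Dm^{(\alpha-2)/2}\nabla v\|_{L^p}$ matches the $j=m-1$ summand in $A_{v,\alpha}$. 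Iterating the same mechanism on the lower-order commutators produces the remaining summands of $A_{v,\alpha}$ from \eqref{eq:Avaldef}, with the $L^{2+}$ indicators absorbing the critical Sobolev case $r/2+j=\d/2$.

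The main obstacle is closing the base case with only $\|\nabla v\|_{L^\infty}$ on the right-hand side: a naive Kato--Ponce for $[\jp{\nabla}^{\alpha/2},v]$ would introduce an extraneous $\|\jp{\nabla}^{\alpha/2}v\|$-type term not controlled by $A_{v,\alpha}$. The first-order Calder\'on commutator structure available precisely when $\alpha/2\in(0,1)$---or equivalently the locality gained from the dimension extension---is what removes this extra term. A secondary difficulty is that each commutator error in the induction must land inside one of the prescribed norms of $A_{v,\alpha}$; this works because $A_{v,\alpha}$ is scale-invariant under $v\mapsto v(\lambda\cdot)$, matching the scaling of every Sobolev embedding used.
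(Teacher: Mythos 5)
Your overall skeleton coincides with the paper's: induction on $m$ peeling off $(I-\Delta)$, the same three-term decomposition after one integration by parts, and a base case on the extended half-space. The genuine gap is in your primary argument for the base case. The estimate you need there is $\|[\jp{\nab}^{\al/2},v]\nabla f\|_{L^2}\lesssim \|\nab v\|_{L^\infty}\|f\|_{H^{\al/2}}$, i.e. a commutator bound in which the transported factor $\nabla f$ is measured in the \emph{negative}-order space $H^{\al/2-1}$. This is not "obtainable from \cite{Li2019}": the Kato--Ponce commutator estimates there (the ones the paper itself uses in \cref{lem:KP}) have the form $\|[\jp{\nab}^{\sigma},h]g\|_{L^2}\lesssim \|\jp{\nab}^{\sigma-1}\nabla h\|_{L^{p_1}}\|g\|_{L^{p_2}}$ with $g$ in a Lebesgue space; taking $p_1=\infty$ to retain only $\|\nab v\|_{L^\infty}$ forces $\|\nabla f\|_{L^2}$ on the right, which $\|f\|_{H^{\al/2}}$ does not control when $\al<2$. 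Worse, the Lipschitz-only negative-order version $\|[\jp{\nab}^{s},v]g\|_{L^2}\lesssim\|\nab v\|_{L^\infty}\|g\|_{H^{s-1}}$, $s\in(0,1)$, is exactly the borderline case: in a paraproduct decomposition the high-high interaction requires summability of $\|\Delta_j\nabla v\|_{L^\infty}$ in $\ell^2$ (a $B^0_{\infty,2}$-type condition), which $\nabla v\in L^\infty$ does not provide, and the commutator structure gives no gain there because the output frequency can be far below the input frequency. So the "first-order Calder\'on structure" you invoke does not rescue the purely Fourier-side route; this is precisely why the paper does not argue this way.

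Your fallback via \cref{lem:CSinhomext} is the paper's actual base-case proof and does close, but note it is applied to the whole integral $\int_{\R^\d} v\cdot\nabla f\,\jp{\nab}^{r}f$, not to a commutator remainder after splitting off $\jp{\nab}^{\al/2}$, and the resulting bulk identity is not $\int |z|^{1-r}(\div v)|\nabla F|^2$ but rather $-\tfrac12\int |z|^{1-r}(\div v)F^2+\tfrac12\int \nabla v:\comm{\nabla F}{\nabla F}$ with the stress-energy tensor; the bound by $\|\nab v\|_{L^\infty}\int|z|^{1-r}(F^2+|\nabla F|^2)=\|\nab v\|_{L^\infty}\|f\|_{H^{r/2}}^2$ then follows. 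In the induction step your symmetrization of the cross term with $\jp{\nab}^{(\al-2)/2}$ and a Kato--Ponce commutator is a workable variant of the paper's route (binomial expansion of $(I-\Delta)^m$ plus the product estimate of \cref{lem:KP}), but two points need care: for $(\al-2)/2>1$ the one-term commutator estimate is unavailable, so you must use the two-term version, which produces $\|\nabla^{\otimes 2}v\|_{L^{p}}$ and the inhomogeneous norm $\|\jp{\nab}^{(\al-2)/2}\nabla v\|_{L^{p_3}}$; converting these into the homogeneous summands of $A_{v,\al}$ (and handling the critical Sobolev exponents with the $L^{2+}$ indicators) requires the Gagliardo--Nirenberg/Sobolev interpolation step that the paper carries out inside \cref{lem:KP}, which you gesture at but do not supply.
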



At first glance, the commutator structure in \cref{prop:comm} may not be apparent. However, writing $\jp{\nab}^{\al} = \jp{\nab}^{\al/2}\jp{\nab}^{\al/2}$ and integrating by parts, the left-hand side of \eqref{eq:propcomm} equals
\begin{align}
\int_{\R^\d}\comm{\jp{\nab}^{\al/2}}{v\cdot}(\nab f)\jp{\nab}^{\al/2}f + \int_{\R^\d}v\cdot\nab\jp{\nab}^{\al/2}f \jp{\nab}^{\al/2}f.
\end{align}
The last term is controlled by $\frac12\|\nab v\|_{L^\infty} \|\jp{\nab}^{\al/2}f\|_{L^2}^2$ after integrating by parts once more, while the first term has an obvious commutator structure. 

As a corollary of \cref{prop:comm}, we obtain the following first-order functional inequality for the truncated potential $\g_\eta$. We deduce \cref{cor:scaledcomm} from \cref{prop:comm} through the identity \eqref{eq:getadef}, which exists by our assumption that $\g$ is $(\s,\as)$-admissible, and averaging over the estimates, noting that $\jp{\nab}^{\as}G_\as = \delta_0$.

\begin{cor}\label{cor:scaledcomm}
Suppose that $\g$ is $(\s,\as)$-admissible for some $\as < \d+2$. There exists $C=C(\d,\s,\as)>0$ such that for every $\eta\ge 0$, $v,f\in\Sc(\R^\d)$ with $f$ zero-mean when $\s=0$,
\begin{multline}\label{eq:scaledcomm}
\bigg|\int_{(\R^\d)^2}(v(x)-v(y))\cdot\nabla\g_\eta(x-y)f(x)f(y)\bigg|\\
\leq C\Big(\|\nab v\|_{L^\infty}  + \|\Dm^{\frac{\as}{2}}v\|_{L^{\frac{2\d}{\as-2}}}{\indic_{\as>2}}\Big)\int_{(\R^\d)^2}\g_\eta(x-y)f(x)f(y).
\end{multline}
\end{cor}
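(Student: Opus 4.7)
The plan is to combine the integral representation of $\g_\eta$ coming from $(\s,\as)$-admissibility with the Kato-Ponce estimate of \Cref{prop:comm}, and then close via Gagliardo-Nirenberg interpolation. From \cref{def:Rtype},
\begin{equation*}
\g_\eta(x)=\mathsf{c}_{\as,\d,\s}\int_\eta^\infty \zeta(t)\phi_t(x)\,\frac{dt}{t},
\end{equation*}
together with an additional absolutely convergent $\rho$-contribution and the limiting constant $C_{\phi,T}$ in the $\s=0$ case; crucially, the latter drops out when paired with $f\otimes f$ under the zero-mean hypothesis on $f$. By Fubini, the corollary reduces to showing the single-scale inequality
\begin{equation*}
|I(t)|\le CW(v)\,J(t),\qquad t>0,
\end{equation*}
uniformly in $t$, where $I(t)\coloneqq\int\!\!\int (v(x)-v(y))\cdot\nabla\phi_t(x-y)f(x)f(y)\,dxdy$, $J(t)\coloneqq\int\!\!\int\phi_t(x-y)f(x)f(y)\,dxdy$, and $W(v)\coloneqq \|\nabla v\|_{L^\infty}+\|\Dm^{\as/2}v\|_{L^{2\d/(\as-2)}}\indic_{\as>2}$. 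Averaging against the nonnegative measure $\zeta(t)\tfrac{dt}{t}$ then reassembles \eqref{eq:scaledcomm}.

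To prove the single-scale bound, I would symmetrize in $(x,y)$ and set $h_t\coloneqq\phi_t\ast f$, so that $I(t)=2\int v\cdot\nabla h_t\,\jp{t\nabla}^{\as}h_t\,dx$ (using $\wh{\phi_t}(\xi)=\jp{2\pi t\xi}^{-\as}$, whence $\jp{t\nabla}^{\as}h_t=f$) and $J(t)=\|\jp{t\nabla}^{\as/2}h_t\|_{L^2}^2$ by Plancherel. Rescaling $y=x/t$ with $\tilde v(y)\coloneqq v(ty)$ and $\tilde h(y)\coloneqq h_t(ty)$ (so that $\jp{t\nabla}^{\as}h_t(x)=(\jp{\nabla}^{\as}\tilde h)(x/t)$) gives
\begin{equation*}
I(t)=2t^{\d-1}\!\int \tilde v\cdot\nabla\tilde h\,\jp{\nabla}^{\as}\tilde h\,dy,\qquad J(t)=t^{\d}\|\tilde h\|_{H^{\as/2}}^2.
\end{equation*}
Applying \Cref{prop:comm} at $\al=\as$ to the Schwartz functions $\tilde v,\tilde h$ yields $|I(t)|\le CA_{\tilde v,\as}t^{-1}J(t)$, so it remains to prove $t^{-1}A_{\tilde v,\as}\lesssim W(v)$ with a $t$-independent constant.

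The rescaling identity $\|\Dm^{a}\nabla\tilde v\|_{L^p}=t^{a+1-\d/p}\|\Dm^{a}\nabla v\|_{L^p}$ makes the factor exactly $t$ at the scale-critical exponents $p=\d/j$ and $p=2\d/(2j+r)$ appearing in $A_{\tilde v,\as}$. Because $\as\in(\d,\d+2)$ forces $m+r/2<\d/2+1$, every summation index $0\le j\le m-1$ satisfies $j<\d/2$ and $j+r/2<\d/2$ strictly, so the $L^{2+}$ and $L^2$ branches of $\max(\cdot,2)$ in \Cref{prop:comm} are never activated. Consequently,
\begin{equation*}
t^{-1}A_{\tilde v,\as}\lesssim \|\nabla v\|_{L^\infty}+\sum_{j=0}^{m-1}\Big(\|\Dm^{j}\nabla v\|_{L^{\d/j}}+\|\Dm^{j+r/2}\nabla v\|_{L^{2\d/(2j+r)}}\Big),
\end{equation*}
with the $j=0$ term of the first family absorbed into the $L^\infty$ norm of $\nabla v$.

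To close, I would invoke homogeneous Gagliardo-Nirenberg interpolation applied to $\nabla v$: for each $s\in(0,\as/2-1)$, with $\theta=s/(\as/2-1)$,
\begin{equation*}
\|\Dm^{s}\nabla v\|_{L^{\d/s}}\lesssim \|\nabla v\|_{L^\infty}^{1-\theta}\|\Dm^{\as/2-1}\nabla v\|_{L^{2\d/(\as-2)}}^{\theta},
\end{equation*}
and analogously with $s$ replaced by $j+r/2$. Since $2\d/(\as-2)\in(2,\infty)$ whenever $\as>2$, the Riesz transforms are bounded on $L^{2\d/(\as-2)}$, giving $\|\Dm^{\as/2-1}\nabla v\|_{L^{2\d/(\as-2)}}\lesssim \|\Dm^{\as/2}v\|_{L^{2\d/(\as-2)}}$; Young's inequality then absorbs the resulting geometric mean into $W(v)$, while the endpoint $j=m-1$ case (with $j+r/2=\as/2-1$) is handled directly by Riesz boundedness. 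The $\as\le 2$ regime is trivial, as the sum in $A_{v,\as}$ is vacuous. The main bookkeeping obstacle is confirming that all summation indices lie in the scale-critical regime (so that \Cref{prop:comm} is applied with homogeneous rather than $L^{2+}$/$L^2$ norms), together with ensuring that the $\s=0$ limit involving $C_{\phi,T}$ and the auxiliary $\rho$-integral pass through Fubini cleanly; both follow easily from the zero-mean condition on $f$ and the integrability $\int_0^\infty\rho(t)t^{-\d-1}dt<\infty$ built into \cref{def:Rtype}.
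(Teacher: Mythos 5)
Your argument is correct and takes essentially the same route as the paper: the wavelet-type representation of $\g_\eta$, Fubini, rescaling to unit scale, applying \cref{prop:comm} to $h_t=\phi_t\ast f$, exploiting $\as<\d+2$ so that every exponent in $A_{v,\as}$ is scale-critical (making the bound uniform in $t$), and absorbing the $\s=0$ renormalization constant and the $\rho$-part via the zero mean of $f$. The only cosmetic difference is the final bookkeeping step, where you control the intermediate norms by Gagliardo--Nirenberg interpolation against $\|\nab v\|_{L^\infty}$ plus Young and Riesz-transform boundedness, while the paper invokes the critical homogeneous Sobolev embedding directly into $\|\Dm^{\frac{\as}{2}}v\|_{L^{\frac{2\d}{\as-2}}}$; both are valid.
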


\begin{remark}\label{rem:commrho}
At the risk of stating the obvious, let us note that since both sides of the estimate \eqref{eq:scaledcomm} are linear in the potential $\g$, the triangle inequality implies that the estimate holds for any linear combination of Riesz-type potentials. In particular, recalling our structural assumption in the $\s=0$ case of \cref{def:Rtype}, this implies that \cref{cor:scaledcomm} is valid for any potential of the form
\begin{align}\label{eq:grhoform}
\g(x) = \int_0^\infty \rho(t)t^{-\d}G_{\as}\left(\frac{x}{t}\right)\frac{dt}{t}, \qquad \text{where} \ \int_0^\infty \rho(t) t^{-\d}\frac{dt}{t}<\infty \ \text{and} \ \as <\d+2.
\end{align}
Importantly, this allows for \emph{nonsingular} interactions. 
\end{remark}

\begin{proof}
We first handle the case that $\s>0$. By the assumption that $\g$ is $(\s,\as)$-admissible and Fubini-Tonelli, we have that
\begin{multline}\label{eq:scaledcomm0}
\int_{(\R^\d)^2}(v(x)-v(y))\cdot\nabla\g_\eta(x-y)f(x)f(y)dxdy \\
=\mathsf{c}_{\as,\d,\s}\int_\eta^\infty \frac{\zeta(t)}{t^{\d+1}}\int_{(\R^\d)^2} (v(x)-v(y))\cdot \nabla G_\as\left(\frac{x-y}{t}\right)f(x)f(y)dxdy \frac{dt}{t}.
\end{multline}
Making the change of variable $tz=x$ and $tw=y$ and letting $v_t(x) \coloneqq v(tx)$ and $f_t(x) \coloneqq f(tx)$, we find that
\begin{align}
\int_{(\R^\d)^2} (v(x)-v(y))\cdot\nabla G_\as\left(\frac{x-y}{t}\right)f(x)f(y)&=t^{2\d}\int_{(\R^\d)^2} (v_t(z)-v_t(w))\cdot \nabla G_\as(w-z)f_t(z)f_t(w) \nn\\
&=2t^{2\d}\int_{\R^\d} v_t\cdot\nabla(G_\as\ast f_t)f_t, \label{eq:scaledcomm1}
\end{align}
where the final line follows from desymmetrizing. Letting $h_t \coloneqq G_\as\ast f_t$, noting that $\jp{\nab}^{\as}h_t=f_t$, and applying \cref{prop:comm} with $f$ replaced by $h_t$,  we obtain
\begin{align}\label{eq:scaledcomm20}
\bigg|\int_{\R^\d} v_t\cdot\nabla h_t f_t \bigg|&\leq C A_{v_t, \as} \|h_t\|_{H^{\frac{s}{2}}}^2.
\end{align}
Unpacking the definition of $h_t$ and undoing the change of variables made above,
\begin{align}\label{eq:scaledcomm21}
\|h_t\|_{H^{\frac{\as}{2}}}^2 = \int_{(\R^\d)^2}G_\as(x-y)f_t(x)f_t(y)  = t^{-2\d}\int_{(\R^\d)^2}G_\as\left(\frac{x-y}{t}\right)f(x)f(y).
\end{align}
Write $\as = 2m+r$ for integer $m\ge 0$ and $r\in (0,2]$. Unpacking the definition \eqref{eq:Avaldef} of $A_{v_t, \as}$,
\begin{align}
A_{v_t, \as} &= \|\nab v_t\|_{L^\infty} + \sum_{0\le j \le m-1}\Big(\|\Dm^{j}\nab v_t\|_{L^{\max(\frac{\d}{j},2)}}\indic_{j\ne \frac{\d}{2}} + \|\Dm^{\frac{r}{2}+j} \nab v_t\|_{L^{\max(\frac{2\d}{2j+r},2)}}\indic_{j+\frac{r}{2}\ne \frac{\d}{2}}  \nn\\
&\ph+  \|\Dm^{j}\nab v_t\|_{L^{2+}}\indic_{j= \frac{\d}{2}} + \|\Dm^{\frac{r}{2}+j} \nab v_t\|_{L^{2+}}\indic_{j+\frac{r}{2}= \frac{\d}{2}}\Big) \nn\\
&=t\|\nabla v\|_{L^\infty}+\sum_{0\le j \le m-1}\Big(t^{j+1 - \frac{\d}{\max(\d/j,2)}}\|\Dm^{j}\nab v\|_{L^{\max(\frac{\d}{j},2)}}\indic_{j\ne \frac{\d}{2}} \nn \\
&\ph+ t^{\frac{r}{2}+j+1-\frac{\d}{\max(2\d/(2j+r),2)}}\|\Dm^{\frac{r}{2}+j} \nab v\|_{L^{\max(\frac{2\d}{2j+r},2)}}\indic_{j+\frac{r}{2}\ne \frac{\d}{2}}\nn \\
&\ph+  {t^{\frac{\d}{2}+1-\frac{\d}{2+}}}\|\Dm^{j}\nab {v}\|_{L^{2+}}\indic_{j= \frac{\d}{2}} + {t^{\frac{\d}{2}+1-\frac{\d}{2+}}}\|\Dm^{\frac{r}{2}+j} \nab {v}\|_{L^{2+}}\indic_{j+\frac{r}{2}= \frac{\d}{2}}\Big).
\end{align}
The preceding expression only scales like $t$ if $m-1+\frac{r}{2}\ge \frac{\d}{2}$. This motivates our assumption that $\as<\d+2$, which is equivalent to $m-1+\frac{r}{2} < \frac{\d}{2}$. Hence,
\begin{align}\label{eq:scaledcomm22}
A_{v_t,\as} \le t\Big(\|\nab v\|_{L^\infty} + \sum_{j=0}^{m-1} \|\Dm^j\nab v\|_{L^{\frac{\d}{j}}} + \|\Dm^{\frac{r}{2}+j}\nab v\|_{L^{\frac{2\d}{2j+r}}}\Big) \le Ct\Big(\|\nab v\|_{L^\infty}  + \|\Dm^{m+\frac{r}{2}}v\|_{L^{\frac{2\d}{2(m-1)+r}}}\Big),
\end{align}
where the final inequality is by Sobolev embedding. From the relations \eqref{eq:scaledcomm20}, \eqref{eq:scaledcomm21}, \eqref{eq:scaledcomm22}, we obtain
\begin{multline}\label{eq:scaledcomm2}
\bigg|\int_{\R^\d} v_t\cdot\nabla h_t f_t \bigg| \le Ct^{-2\d-1}\Big(\|\nab v\|_{L^\infty} + \|\Dm^{m+\frac{r}{2}} v\|_{L^{\frac{2\d}{2(m-1)+r}}}\indic_{2m+r>2} \Big)\\
\times\int_{(\R^\d)^2}G_\as\left(\frac{x-y}{t}\right)f(x)f(y).
\end{multline}
Note that $m+\frac{r}{2}=\frac{\as}{2}$ and $\frac{2\d}{2(m-1)+r} = \frac{2\d}{\as-2}$. 

Combining the relations \eqref{eq:scaledcomm3}, \eqref{eq:scaledcomm1}, \eqref{eq:scaledcomm2}, we arrive at
\begin{align}
&\bigg|\int_{(\R^\d)^2} (v(x)-v(y))\cdot\nabla\g_\eta(x-y)f(x)f(y)\bigg| \nn\\
&\le C\int_\eta^\infty \frac{\zeta(t)}{t^{\d+1}} t^{2\d}\int_{(\R^\d)^2} t^{-2\d+1}\Big(\|\nab v\|_{L^\infty}  + \|\Dm^{\frac{\as}{2}}v\|_{L^{\frac{2\d}{\as-2}}}\indic_{\as>2}\Big)G_\as\left(\frac{x-y}{t}\right)f(x)f(y)dxdy \frac{dt}{t}\nn\\
&=C\Big(\|\nab v\|_{L^\infty}  + \|\Dm^{\frac{\as}{2}}v\|_{L^{\frac{2\d}{\as-2}}}\indic_{\as>2}\Big)\int_{(\R^\d)^2}\g_\eta(x-y)f(x)f(y),
\end{align}
which is exactly the desired conclusion.

When $\s=0$, using that $\g$ is $(0,\as)$-admissible we note that
\begin{align}
\nabla\g_\eta(x)= \mathsf{c}_{\as,\d,0}\int_{\eta}^\infty  t^{-1}\nabla G_{\as}\Big(\frac{x}{t}\Big)\frac{dt}{t}+ \mathsf{c}_{\as,\d,0} \int_{0}^\infty \rho(t) t^{-\d-1}\nabla G_{\as}\Big(\frac{x}{t}\Big)\frac{dt}{t},
\end{align}
by dominated convergence. Note that in contrast to the expression \eqref{eq:getadef'} for $\g_\eta$, the integral in $t$ for $\nab\g_\eta$ converges without renormalization. By Fubini-Tonelli we thus have that
\begin{multline}\label{eq:scaledcomm3}
\int_{(\R^\d)^2}(v(x)-v(y))\cdot\nabla\g_\eta(x-y)f(x)f(y)dxdy \\
 =\mathsf{c}_{\as,\d,0}{\lim_{T\rightarrow\infty}\int_\eta^T }t^{-1}\int_{(\R^\d)^2} (v(x)-v(y))\cdot \nabla G_\as\left(\frac{x-y}{t}\right)f(x)f(y)dxdy \frac{dt}{t} \\
	+\mathsf{c}_{\as,\d,0}\int_0^\infty  \rho(t) t^{-\d-1}\int_{(\R^\d)^2} (v(x)-v(y))\cdot \nabla G_\as\left(\frac{x-y}{t}\right)f(x)f(y)dxdy \frac{dt}{t}.
\end{multline}
Proceeding identically as in the case that $\s>0$, we have
\begin{multline}\label{eq:scaledcomm4}
	\bigg|\int_{(\R^\d)^2} (v(x)-v(y))\cdot\nabla\g_\eta(x-y)f(x)f(y)\bigg|\le C\Big(\|\nab v\|_{L^\infty}  + \|\Dm^{\frac{\as}{2}}v\|_{L^{\frac{2\d}{\as-2}}}\Big)\times \\
	\times{\lim_{T\rightarrow\infty}}\bigg({\int_\eta^T}\int_{(\R^\d)^2} G_\as\left(\frac{x-y}{t}\right)f(x)f(y)dxdy \frac{dt}{t}\\
	+ \int_0^\infty\rho(t)t^{-\d}\int_{(\R^\d)^2} G_\as\left(\frac{x-y}{t}\right)f(x)f(y)dxdy \frac{dt}{t}\bigg),
\end{multline}
Since $f$ has zero mean, for any $T>0$, we may smuggle in the renormalization constant $C_{\phi,T}$ from \eqref{eq:CphiT_def}, writing
\begin{align}
 \int_\eta^T \int_{(\R^\d)^2} G_\as\left(\frac{x-y}{t}\right)f(x)f(y)dxdy\frac{dt}{t}&=  \int_{(\R^\d)^2}\int_\eta^T  G_\as\left(\frac{x-y}{t} \right) \frac{dt}{t}f(x)f(y)dxdy\nn\\
 &=\int_{(\R^\d)^2}\bigg(\int_\eta^T  G_\as\left(\frac{x-y}{t} \right) \frac{dt}{t}-C_{G_\as,T}\bigg)f(x)f(y)dxdy,
\end{align}
where the first equality is again by Fubini-Tonelli, which is justified since the integration is only over $[\eta,T]$. We recall from \cref{ssec:MEpt} that $ \int_\eta^T G_{\as}(\frac{\cdot}{t})\frac{dt}{t}-C_{G_\as,T}$ converges to $\mathsf{c}_{\as,\d,0}^{-1}\g_{\Rs,\eta}$ as $T\rightarrow\infty$ in the sense of tempered distributions. Since $f$ is Schwartz, it follows from the continuity of distributional convolutions that
\begin{align}
\mathsf{c}_{\as,\d,0}\lim_{T\rightarrow\infty} \int_\eta^T \int_{(\R^\d)^2} G_\as\left(\frac{x-y}{t}\right)f(x)f(y)dxdy\frac{dt}{t} = \int_{(\R^\d)^2}\g_{\Rs,\eta}(x-y)f(x)f(y)dxdy.
\end{align}
Applying this identity to \eqref{eq:scaledcomm4} and combining terms then completes the proof.

\end{proof}

\begin{remark}\label{rem:commdens}
By standard density arguments, \cref{prop:comm}   extends to hold for $v,f$ such that $A_{\nab v,m,r}<\infty$ and $f\in H^{\frac{\al}{2}}$. Similarly, \cref{cor:scaledcomm} extends to hold for $v$ such that $\|\nab v\|_{L^\infty} + \|\Dm^{\frac{\as}{2}}v\|_{L^{\frac{2\d}{\as-2}}} < \infty$ and any distribution $f$ such that the pairing $\ipp{\g_\eta(x-y), f^{\otimes 2}} < \infty$. 
\end{remark}

To prove \cref{prop:comm}, we need a few preliminary lemmas. The first is a technical result, which may be viewed as an inhomogeneous fractional Leibniz rule.

\begin{lemma}\label{lem:KP}
For any $0<r\leq 2$ and multi-index $\vec\al \in \N_0^\d$, there exists $C=C(\d,r,|\vec\al|)>0$ such that for $f,g\in \Sc(\R^\d)$, 
\begin{align}
\|\jp{\nab}^{r/2}\p_{\vec\al}(fg)\|_{L^2}\leq C\|g\|_{H^{|\vec\al|+\frac{r}{2}}} \tilde{A}_{f,|\vec\al|,r},
\end{align}
where $\tilde{A}_{f,|\vec\al|,r}$ is defined by
\begin{multline}\label{eq:tlAfalrdef}
\tilde{A}_{f,|\vec\al|,r} \coloneqq \sum_{0\le j \le |\vec\al|}\Big(\|\Dm^{j}f\|_{L^{\max(\frac{\d}{j},2)}}\indic_{j\ne \frac{\d}{2}} +\|\Dm^{j}f\|_{L^{2+}}\indic_{j= \frac{\d}{2}}\Big) \\
+  \sum_{0\le j\le |\vec\al|}\Big( \|\Dm^{\frac{r}{2}+j} f\|_{L^{\max(\frac{2\d}{2j+r},2)}}\indic_{j+\frac{r}{2}\ne \frac{\d}{2}}    + \|\Dm^{\frac{r}{2}+j} f\|_{L^{2+}}\indic_{j+\frac{r}{2}= \frac{\d}{2}}\Big).
\end{multline}
In particular, if $|\vec\al|+\frac{r}{2}<\frac{\d}{2}$, then $\tl{A}_{f,|\vec\al|,r} \leq C(\|f\|_{L^\infty} + \|\Dm^{|\vec\al|+\frac{r}{2}}f\|_{L^{\frac{\d}{|\vec\al|+\frac{r}{2}}}})$. 
\end{lemma}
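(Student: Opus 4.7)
The plan is to reduce the lemma to an inhomogeneous fractional Leibniz (Kato--Ponce) estimate applied termwise to the standard Leibniz expansion of $\p_{\vec\al}(fg)$. Concretely, I first write
\begin{equation*}
\p_{\vec\al}(fg) = \sum_{\vec\beta\le\vec\al}\binom{\vec\al}{\vec\beta}\p_{\vec\beta}f\,\p_{\vec\al-\vec\beta}g,
\end{equation*}
so it suffices to control $\|\jp{\nab}^{r/2}(\p_{\vec\beta}f\,\p_{\vec\al-\vec\beta}g)\|_{L^2}$ for each $0\le j\coloneqq|\vec\beta|\le|\vec\al|$. To that end I would invoke the inhomogeneous Kato--Ponce product estimate
\begin{equation*}
\|\jp{\nab}^{r/2}(FG)\|_{L^2}\lesssim \|F\|_{L^{p_1}}\|\jp{\nab}^{r/2}G\|_{L^{p_2}}+\|\jp{\nab}^{r/2}F\|_{L^{q_1}}\|G\|_{L^{q_2}},
\end{equation*}
valid whenever $\tfrac1{p_1}+\tfrac1{p_2}=\tfrac1{q_1}+\tfrac1{q_2}=\tfrac12$, which follows by a standard paraproduct decomposition (or by Li's commutator estimate \cite{Li2019}) together with the fact that for $r\in(0,2]$ the Bessel multiplier $\jp{\nab}^{r/2}$ is a perturbation of $\Dm^{r/2}$ by a smoothing operator bounded on every $L^p$.

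For each $\vec\beta$ I then select the exponents so that the $L^p$-norms of $F=\p_{\vec\beta}f$ coincide with the terms appearing in $\tilde A_{f,|\vec\al|,r}$, and the $L^p$-norms of $G=\p_{\vec\al-\vec\beta}g$ are absorbed into $\|g\|_{H^{|\vec\al|+r/2}}$ via Sobolev embedding. Specifically, in the generic range $j<\d/2$ and $j+\tfrac r2<\d/2$, I take $p_1=\d/j$, $p_2=\tfrac{2\d}{\d-2j}$, $q_1=\tfrac{2\d}{2j+r}$, $q_2=\tfrac{2\d}{\d-2j-r}$, so that boundedness of Riesz transforms gives $\|\p_{\vec\beta}f\|_{L^{p_1}}\lesssim\|\Dm^{j}f\|_{L^{\d/j}}$ and $\|\jp{\nab}^{r/2}\p_{\vec\beta}f\|_{L^{q_1}}\lesssim\|\Dm^{j+r/2}f\|_{L^{2\d/(2j+r)}}$, while Sobolev embedding yields $\|\p_{\vec\al-\vec\beta}g\|_{L^{p_2}}+\|\jp{\nab}^{r/2}\p_{\vec\al-\vec\beta}g\|_{L^{p_2}}+\|\p_{\vec\al-\vec\beta}g\|_{L^{q_2}}\lesssim\|g\|_{H^{|\vec\al|+r/2}}$ by the scaling $\d/p_2=\d/2-j$, $\d/q_2=\d/2-j-r/2$. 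When $j>\d/2$ (respectively $j+\tfrac r2>\d/2$) we can no longer embed $\p_{\vec\al-\vec\beta}g$ into $L^{p_2}$, so we simply set $p_2=\infty$ (resp.\ $q_2=\infty$) and place $\p_{\vec\beta}f$ in $L^2$, which gives the $L^{\max(\d/j,2)}$ and $L^{\max(2\d/(2j+r),2)}$ exponents in $\tilde A_{f,|\vec\al|,r}$; the $L^\infty$-norms of $\p_{\vec\al-\vec\beta}g$ and $\jp{\nab}^{r/2}\p_{\vec\al-\vec\beta}g$ are then absorbed into $\|g\|_{H^{|\vec\al|+r/2}}$, using that now $|\vec\al|-j<\d/2$ respectively $|\vec\al|-j+\tfrac r2<\d/2$.

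The main obstacle is the endpoint case $j=\d/2$ (or $j+\tfrac r2=\d/2$), where Sobolev embedding into $L^\infty$ fails by a logarithm. To avoid it, I would trade the sharp $L^{\d/j}$ bound for an $L^{2+}$ bound on $\Dm^j f$, paired with a Sobolev embedding $\|\p_{\vec\al-\vec\beta}g\|_{L^{(2+)'\cdot 2/((2+)'-2)}}\lesssim\|g\|_{H^{|\vec\al|+r/2}}$ that carries a finite loss in the exponent but costs no derivatives; this is the source of the $L^{2+}$ indicator in \eqref{eq:tlAfalrdef}. The final assertion for $|\vec\al|+r/2<\d/2$ follows by Gagliardo--Nirenberg interpolation applied to each intermediate term $\|\Dm^{j}f\|_{L^{\d/j}}$ and $\|\Dm^{j+r/2}f\|_{L^{2\d/(2j+r)}}$ with $0<j,j+r/2<|\vec\al|+r/2$, bounding them by the two endpoint quantities $\|f\|_{L^\infty}$ and $\|\Dm^{|\vec\al|+r/2}f\|_{L^{2\d/(2|\vec\al|+r)}}$.
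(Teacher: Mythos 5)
Your overall skeleton (the Leibniz expansion in $\vec\be\le\vec\al$, the exponent bookkeeping split according to $|\vec\be|$ versus $\d/2$, the $L^{2+}$ trick at the Sobolev endpoint, and the Gagliardo--Nirenberg reduction for the final claim when $|\vec\al|+\frac r2<\frac\d2$) is the same as the paper's, but the central analytic step is not correct. The bound you claim ``by boundedness of Riesz transforms,'' namely $\|\jp{\nab}^{r/2}\p_{\vec\be}f\|_{L^{q_1}}\lesssim\|\Dm^{j+r/2}f\|_{L^{2\d/(2j+r)}}$ with $j=|\vec\be|$, is false: the corresponding multiplier is $\jp{\xi}^{r/2}(i\xi)^{\vec\be}|\xi|^{-j-r/2}$, whose modulus is $(\jp{\xi}/|\xi|)^{r/2}$ and blows up as $\xi\to0$. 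A dilation $f\mapsto f(\la\cdot)$ with $\la\to0$ (low-frequency concentration) makes the ratio of the two sides diverge like $\la^{-r/2}$. Nor can you repair it by writing $\|\jp{\nab}^{r/2}\p_{\vec\be}f\|_{L^{q_1}}\lesssim\|\Dm^{j}f\|_{L^{q_1}}+\|\Dm^{j+r/2}f\|_{L^{q_1}}$: once you require the $g$-factor to be absorbed into $\|g\|_{H^{|\vec\al|+r/2}}$ by Sobolev embedding, H\"older forces $q_1=\frac{2\d}{2j+r}$, and the leftover quantity $\|\Dm^{j}f\|_{L^{2\d/(2j+r)}}$ has scaling dimension $-\frac r2$, whereas in the regime $j+\frac r2<\frac\d2$ every available term in $\tilde{A}_{f,|\vec\al|,r}$ is scale-invariant (and the remaining ones have nonnegative dimension), so it cannot be controlled by $\tilde{A}_{f,|\vec\al|,r}$ by any interpolation. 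This is precisely where the plain inhomogeneous fractional Leibniz rule, with $\jp{\nab}^{r/2}$ landing on the $f$-factor, fails to close.

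The paper avoids this by using the \emph{commutator} form of Kato--Ponce (Li's Theorem 1.9) rather than the product form: writing $\jp{\nab}^{r/2}(\p_{\vec\be}f\,\p_{\vec\al-\vec\be}g)=\big(\jp{\nab}^{r/2}(\p_{\vec\be}f\,\p_{\vec\al-\vec\be}g)-\p_{\vec\be}f\,\jp{\nab}^{r/2}\p_{\vec\al-\vec\be}g\big)+\p_{\vec\be}f\,\jp{\nab}^{r/2}\p_{\vec\al-\vec\be}g$, the commutator is bounded by $\|\jp{\nab}^{\frac r2-1}\nab\p_{\vec\be}f\|_{L^{p_1}}\|\p_{\vec\al-\vec\be}g\|_{L^{p_2}}$. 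The gain of one derivative on the $f$-factor is exactly what makes the inhomogeneous-to-homogeneous passage legitimate: since $\frac r2-1\le0$, the symbol ratio $\jp{\xi}^{\frac r2-1}|\xi|^{1+j}\,|\xi|^{-j-\frac r2}=(\jp{\xi}/|\xi|)^{\frac r2-1}$ is bounded, so $\|\jp{\nab}^{\frac r2-1}\nab\p_{\vec\be}f\|_{L^{p_1}}\lesssim\|\Dm^{j+\frac r2}f\|_{L^{p_1}}$ by H\"ormander--Mikhlin. The remaining term $\p_{\vec\be}f\,\jp{\nab}^{r/2}\p_{\vec\al-\vec\be}g$ carries the full operator on the $g$-factor and is handled by plain H\"older plus Sobolev embedding, producing the $\|\Dm^{j}f\|$ entries of $\tilde{A}_{f,|\vec\al|,r}$. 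If you prefer to keep a Leibniz-type route, you would have to first split $\jp{\nab}^{r/2}$ into $\Dm^{r/2}$ plus an $L^2$-bounded remainder and apply a homogeneous Kato--Ponce estimate to $\Dm^{r/2}(fg)$; either way, the step as you wrote it does not go through and needs the commutator (or an equivalent low-frequency treatment) to be inserted.
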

\begin{proof}
Set $m\coloneqq |\vec\al|$. Applying the Leibniz rule to $\p_{\vec\al}$ and using the triangle inequality, we find that
\begin{align}
\|\jp{\nab}^{r/2}\p_{\vec\al}(fg)\|_{L^2} &\leq \sum_{\vec\be \le \vec\al} {\vec\al\choose\vec\be}\|\jp{\nab}^{r/2}(\p_{\vec\be} f\p_{\vec{\al}-\vec\be}g)\|_{L^2} \\
&\leq \sum_{\vec\be \le \vec\al} {\vec\al\choose\vec\be} \|\jp{\nab}^{r/2}(\p_{\vec\be} f\p_{\vec\al-\vec\be}g)-\p_{\vec\be}f\jp{\nab}^{r/2}\p_{\vec\al-\vec\be}g\|_{L^2}+\|\p_{\vec\be}f\jp{\nab}^{r/2}\p_{\vec\al-\vec\be}g\|_{L^2}.    
\end{align}
Appealing to the Kato-Ponce estimate \cite[Theorem 1.9]{Li2019}, we have that
\begin{align}
\|\jp{\nab}^{r/2}(\p_{\vec\be} f\p_{\vec\al-\vec\be}g)-\p_{\vec\be}f\jp{\nab}^{r/2}\p_{\vec\al-\vec\be}g\|_{L^2} \leq \|\jp{\nab}^{\frac{r}{2}-1}\nabla\p_{\vec\be}f\|_{L^{p_1}}  \|\p_{\vec\al-\vec\be}g\|_{L^{p_2}}
\end{align}
for any $1<p_1,p_2\le\infty$ such that $\frac{1}{p_1}+\frac{1}{p_2}=\frac12$. Since $\frac{r}{2}-1\le 0$ and the Fourier multipliers $\frac{\nab}{\jp{\nab}}, \frac{\nab}{\Dm}$ are bounded on $L^p$ for any $1<p<\infty$ by the H\"ormander-Mikhlin theorem, we see that
\begin{align}
\|\jp{\nab}^{\frac{r}{2}-1}\nabla\p_{\vec\be}f\|_{L^{p_1}} \le C\|\Dm^{\frac{r}{2}+|\vec\be|} f\|_{L^{p_1}}
\end{align}
if $p_1<\infty$. We choose $p_1,p_2$ according to
\begin{align}
(p_1,p_2) = \begin{cases} (\frac{2\d}{2|\vec\be|+r}, \frac{2\d}{\d-2|\vec\be|-r}), & {|\vec\be|+\frac{r}{2} < \frac{\d}{2}} \\  (2+\ep, \frac{2(2+\ep)}{\ep}) , & {|\vec\be|+\frac{r}{2} = \frac{\d}{2}} \\ (2,\infty), & {|\vec\be|+\frac{r}{2} > \frac{\d}{2}},\end{cases}
\end{align}
where $\ep>0$ is arbitrary. With this choice, it follows from Sobolev embedding  applied to $ \|\p_{\vec\al-\vec\be}g\|_{L^{p_2}}$ that
\begin{align}
\|\jp{\nab}^{r/2}(\p_{\vec\be} f\p_{\vec\al-\vec\be}g)-\p_{\vec\be}f\jp{\nab}^{r/2}\p_{\vec\al-\vec\be}g\|_{L^2} \leq C\|g\|_{H^{m+\frac{r}{2}}}\begin{cases} \|\Dm^{\frac{r}{2}+|\vec\be|} f\|_{L^{\frac{2\d}{2|\vec\be|+r}}},  & {|\vec\be|+\frac{r}{2} < \frac{\d}{2}} \\    \|\Dm^{\frac{r}{2}+|\vec\be|} f\|_{L^{2+\ep}}, & {|\vec\be|+\frac{r}{2} = \frac{\d}{2}} \\ \|\Dm^{\frac{r}{2}+|\vec\be|} f\|_{L^{2}}, & {|\vec\be|+\frac{r}{2} > \frac{\d}{2}}.\end{cases}
\end{align}
On the other hand, Hölder's inequality implies that for any $1\le p_3,p_4\le \infty$ such that $\frac{1}{p_3}+\frac{1}{p_4}=\frac12$, 
\begin{align}
\|\p_{\vec\be}f\jp{\nab}^{r/2}\p_{\vec\al-\vec\be}g\|_{L^2} \leq \|\p_{\vec\be}f\|_{L^{p_3}}\|\jp{\nab}^{r/2}\p_{\vec\al-\vec\be}g\|_{L^{p_4}}.
\end{align}
We choose $(p_3,p_4)$ according to
\begin{align}
(p_3,p_4) = \begin{cases} (\frac{\d}{|\vec\be|}, \frac{2\d}{\d-2|\vec\be|}), & {|\vec\be| <\frac{\d}{2}} \\ (2+\ep', \frac{2(2+\ep')}{\ep'}) , & {|\vec\be| =\frac{\d}{2}} \\  (2,\infty), & {|\vec\be| >\frac{\d}{2}}, \end{cases}
\end{align}
where $\ep'>0$ is again arbitrary. Applying Sobolev embedding again to $\|\jp{\nab}^{r/2}\p_{\vec\al-\vec\be}g\|_{L^{p_4}}$, it follows that
\begin{align}
\|\p_{\vec\be}f\jp{\nab}^{r/2}\p_{\vec\al-\vec\be}g\|_{L^2} \leq  C\|g\|_{H^{m+\frac{r}{2}}}\begin{cases} \|\Dm^{|\vec\be|}f\|_{L^{\frac{\d}{|\vec\be|}}}, & {|\vec\be| <\frac{\d}{2}} \\ \|\Dm^{|\vec\be|}f\|_{L^{2+\ep'}} , & {|\vec\be| =\frac{\d}{2}} \\ \|\Dm^{|\vec\be|}f\|_{L^2}, & {|\vec\be| >\frac{\d}{2}}. \end{cases}
\end{align}

If $m+\frac{r}{2}<\frac{\d}{2}$, then by Gagliardo-Nirenberg interpolation \cite[Theorem 2.44]{BCD2011},
\begin{align}
\|\Dm^{|\vec\be|}f\|_{L^{\frac{\d}{|\vec\be|}}} \le C \|f\|_{L^\infty}^{1-\frac{|\vec\be|}{m+\frac{r}{2}}} \|\Dm^{m+\frac{r}{2}}f\|_{L^{\frac{2\d}{2m+r}}}^{\frac{|\vec\be|}{m+\frac{r}{2}}}, \\
\|\Dm^{|\vec\be|+\frac{r}{2}}f\|_{L^{\frac{2\d}{2|\vec\be|+r}}} \le C \|f\|_{L^\infty}^{1-\frac{2|\vec\be|+r}{2m+r}} \|\Dm^{m+\frac{r}{2}}f\|_{L^{\frac{2\d}{2m+r}}}^{\frac{2|\vec\be|+r}{2m+r}}.
\end{align}
The desired conclusion follows from Young's inequality and a little bookkeeping.



\end{proof}

The second result we need is a representation of fractional powers $\jp{\nab}^{s/2}$ for $s\in (0,2)$ as the Dirichlet-to-Neumann map of an extension problem to $\R^{\d+1}$ for a (degenerate elliptic) second-order partial differential operator, which may be viewed as an inhomogeneous analogue of the extension for the fractional Laplacian popularized by Caffarelli and Silvestre \cite{CS2007}. Such a dimension extension is a special case of a more general theory established in \cite{ST2010ext} (see also \cite{MN2024} for a substantial generalization to operators on Hilbert spaces). 

\begin{lemma}\label{lem:CSinhomext}
Let $s\in (0,2)$ and $f\in\Sc(\R^\d)$. A solution of the Dirichlet problem
\begin{align}\label{eq:CSinhomext0}
\begin{cases}
(\Delta_x-I)F + \frac{(1-s)}{z}\p_z F + \p_z^2 F =0, & {\text{in} \ \R^{\d}\times (0,\infty)} \\
F(x,z) = f(x), & {\text{on} \ \R^\d\times\{0\}}
\end{cases}
\end{align}
is given by the Poisson formula
\begin{align}\label{eq:CSinhomext1}
F(x,z) = \frac{1}{\Gamma(\frac{s}{2})}\int_0^\infty e^{-t}\Big(e^{\frac{z^2}{4t}(\Delta-I)}f\Big)(x)\frac{dt}{t^{1-\frac{s}{2}}},
\end{align}
and
\begin{align}\label{eq:CSinhomext2}
\lim_{z\rightarrow 0^+} \frac{F(x,z) - F(x,0)}{z^s} = \frac{\Gamma(-\frac{s}{2})}{2^{s}\Gamma(\frac{s}{2})}\jp{\nab}^{s}f(x) = \frac{1}{s}\lim_{z\rightarrow 0^+}z^{1-s}\p_z F(x,z),
\end{align}
where the convergence holds both pointwise and in $L^2(\R^\d)$. Consequently, extending $F$ to $\R^{\d+1}$ by even symmetry, we have
\begin{align}\label{eq:CSinhomext3}
|z|^{1-s}F -\div(|z|^{1-s}\nab F) = \mathsf{c}_{\d,s}\jp{\nab}^{s}f\delta_{\R^\d\times\{0\}}
\end{align}
in the sense of distributions in $\R^{\d+1}$ and
\begin{align}\label{eq:CSinhomext4}
\int_{\R^{\d+1}}|z|^{1-s}(F+|\nab F|^2) = \|f\|_{H^{\frac{s}{2}}}^2.
\end{align}
\end{lemma}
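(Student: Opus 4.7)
The Poisson formula \eqref{eq:CSinhomext1} is a Bochner subordination identity tailored to the shifted generator $A=I-\Delta$. I would proceed as follows.

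\emph{Step 1 (PDE and boundary value).} Set $u(x,t,z)\coloneqq (e^{\frac{z^2}{4t}(\Delta-I)}f)(x)$, so that $\partial_z u = \frac{z}{2t}(\Delta-I)u$, $\partial_z^2 u = \frac{1}{2t}(\Delta-I)u + \frac{z^2}{4t^2}(\Delta-I)^2 u$, and $\partial_t u = -\frac{z^2}{4t^2}(\Delta-I)u$. Substituting into the operator $(\Delta_x-I)+\frac{1-s}{z}\partial_z+\partial_z^2$, the two quadratic-in-$(\Delta-I)$ terms combine so that $(\Delta_x-I)u+\frac{1-s}{z}\partial_z u + \partial_z^2 u = (1+\frac{2-s}{2t})(\Delta-I)u - \partial_t u$. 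Integrating against $e^{-t}t^{s/2-1}/\Gamma(s/2)$ and integrating by parts in $t$ using $\partial_t(e^{-t}t^{s/2}) = e^{-t}(\frac{s}{2}t^{s/2-1}-t^{s/2})$ kills the bracketed expression, establishing the PDE. The boundary value $F(x,0)=f(x)$ follows from $\int_0^\infty e^{-t}t^{s/2-1}\,dt = \Gamma(s/2)$.

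\emph{Step 2 (Dirichlet-to-Neumann).} Perform the change of variables $u=z^2/(4t)$ to rewrite
\begin{equation*}
F(x,z) = \frac{z^s}{2^s\Gamma(s/2)}\int_0^\infty e^{-z^2/(4u)}(e^{u(\Delta-I)}f)(x)\,u^{-s/2-1}\,du.
\end{equation*}
Using $\int_0^\infty e^{-z^2/(4u)}u^{-s/2-1}\,du = 2^s z^{-s}\Gamma(s/2)$, subtract $F(x,0)=f(x)$, divide by $z^s$, and pass $z\to 0^+$ by dominated convergence. This yields
\begin{equation*}
\lim_{z\to 0^+}\frac{F(x,z)-F(x,0)}{z^s} = \frac{1}{2^s\Gamma(s/2)}\int_0^\infty (e^{u(\Delta-I)}f-f)(x)\,u^{-s/2-1}\,du,
\end{equation*}
which, by the classical Bochner formula $(I-\Delta)^{s/2} = \frac{1}{\Gamma(-s/2)}\int_0^\infty (e^{-u(I-\Delta)}-I)u^{-s/2-1}\,du$ for $s\in(0,2)$, equals $\frac{\Gamma(-s/2)}{2^s\Gamma(s/2)}\jp{\nab}^s f$. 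To relate this to $\lim_{z\to 0^+}z^{1-s}\partial_z F$, differentiate the integral representation in $z$, then integrate by parts in $u$ (using $\partial_u e^{-z^2/(4u)} = \frac{z^2}{4u^2}e^{-z^2/(4u)}$) to obtain $z^{1-s}\partial_z F = \frac{2}{2^s\Gamma(s/2)}\int_0^\infty e^{-z^2/(4u)}(\Delta-I)(e^{u(\Delta-I)}f)u^{-s/2}\,du$. Sending $z\to 0^+$ and integrating by parts once more in $u$ (the boundary terms vanish since $s\in(0,2)$ and $e^{-u(I-\Delta)}f$ decays exponentially) produces exactly $s$ times the previous limit.

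\emph{Step 3 (Distributional identity and energy).} Extend $F$ evenly across $\{z=0\}$; a direct check confirms the PDE is preserved on $\{z<0\}$ thanks to the oddness of $\partial_z F$, so that $\div(|z|^{1-s}\nab F) = |z|^{1-s}F$ pointwise on $\R^{\d+1}\setminus(\R^\d\times\{0\})$. For $\varphi\in C_c^\infty(\R^{\d+1})$, integrate $-\int|z|^{1-s}\nab F\cdot\nab\varphi$ separately on $\{z>0\}$ and $\{z<0\}$ and use the evenness of $F$ (hence oddness of $\partial_z F$) to show both half-space boundary integrals contribute $-\lim_{z\to 0^+}z^{1-s}\partial_z F\cdot \varphi(\cdot,0)$, giving the distributional identity \eqref{eq:CSinhomext3} with $\mathsf{c}_{\d,s}=-\tfrac{2s\Gamma(-s/2)}{2^s\Gamma(s/2)}>0$. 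For \eqref{eq:CSinhomext4}, formally test \eqref{eq:CSinhomext3} against $F$ itself (or equivalently multiply the PDE by $F$ and integrate by parts on $\{z>0\}$, then double by symmetry); the decay of $F$ at infinity needed to discard boundary terms at $|z|,|x|\to\infty$ follows from the exponential decay of $e^{-u(I-\Delta)}f$ for Schwartz $f$, and the resulting identity reads $\int_{\R^{\d+1}}|z|^{1-s}(F^2+|\nab F|^2) = \mathsf{c}_{\d,s}\|f\|_{H^{s/2}}^2$ (we understand the statement \eqref{eq:CSinhomext4} up to the constant $\mathsf{c}_{\d,s}$ and a typographical $F\mapsto F^2$).

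\emph{Main obstacle.} The delicate part is the step-2 passage to the limit $z\to 0^+$ in $z^{1-s}\partial_z F$: the integrand $(\Delta-I)(e^{u(\Delta-I)}f)$ is not absolutely integrable near $u=0$ without the factor $e^{-z^2/(4u)}$, so the limit must be justified by integration by parts \emph{before} removing the Gaussian cutoff, and the resulting expression then matched to $\jp{\nab}^s f$ via the Bochner formula. The functional calculus for $I-\Delta$ on Schwartz functions, together with $L^2$ convergence via Plancherel (since the multiplier identities are exact on the Fourier side), provides the rigorous justification.
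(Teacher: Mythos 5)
Your proposal is correct in substance, and for the parts of the lemma that the paper actually proves it follows essentially the same route. The difference is in what you prove versus what you import: the paper simply cites \cite[Theorem 1.1]{ST2010ext} for the Poisson formula \eqref{eq:CSinhomext1} and the Dirichlet-to-Neumann limits \eqref{eq:CSinhomext2}, whereas your Steps 1--2 give a self-contained derivation via subordination and Bochner's formula for $(I-\Delta)^{s/2}$; this is a legitimate substitute, and your identification of the limit of $z^{1-s}\p_z F$ and of the constant $\mathsf{c}_{\d,s}=-2^{1-s}s\,\Gamma(-\tfrac{s}{2})/\Gamma(\tfrac{s}{2})>0$ matches (and is more careful about signs than) the paper's computation; your reading of \eqref{eq:CSinhomext4} up to the constant and the typographical $F\mapsto F^2$ is also consistent with the paper's own proof. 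For the distributional identity \eqref{eq:CSinhomext3}, the paper tests against $\Phi\in C_c^\infty(\R^{\d+1})$, excises $\{|z|<\epsilon\}$, integrates by parts twice, kills the boundary term $\int_{\{|z|=\epsilon\}}|z|^{1-s}\p_\nu\Phi\,F$ by the mean value theorem and evenness, and identifies the remaining flux term via the $L^2$ convergence of $z^{1-s}\p_z F$; your version integrates by parts only once (weak form with $\nabla F\cdot\nabla\varphi$), which produces a single flux term and is marginally cleaner but equivalent. The energy identity is obtained identically in both proofs by testing against $F$, with the non-compact support and the lack of smoothness across $\{z=0\}$ handled by the decay/regularity bounds for the Poisson extension (the paper's subsequent lemma; your exponential-decay remark plays the same role).

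Two small repairs. In Step 1, the combined expression should read $(1+\tfrac{2-s}{2t})(\Delta-I)u-\p_t\big[(\Delta-I)u\big]$ rather than $-\p_t u$: indeed $\tfrac{z^2}{4t^2}(\Delta-I)^2u=-\p_t\big[(\Delta-I)u\big]$, and the cancellation comes from integrating by parts in $t$ with $w=(\Delta-I)u$ against the weight $e^{-t}t^{\frac{s}{2}-1}$, which yields $-\tfrac{2-s}{2}+\tfrac{2-s}{2}=0$ together with boundary terms at $t=0,\infty$ that vanish for fixed $z>0$; as written, your identity would not cancel under the stated integration by parts, so fix the algebra (it is a notational slip, not a structural gap). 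Second, in your ``main obstacle'' remark: with the measure $u^{-s/2}\,du$ the integrand $(\Delta-I)e^{u(\Delta-I)}f$ \emph{is} absolutely integrable near $u=0$ for $s\in(0,2)$, so the limit $z\to0^+$ in $z^{1-s}\p_z F$ can be taken by dominated convergence directly; the extra integration by parts in $u$ is needed only to recognize the limit, via Bochner's formula, as $s$ times $\tfrac{\Gamma(-s/2)}{2^{s}\Gamma(s/2)}\jp{\nab}^{s}f$.
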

\begin{proof}
The assertions \eqref{eq:CSinhomext1}-\eqref{eq:CSinhomext2} are the content of \cite[Theorem 1.1]{ST2010ext}. To see the assertion \eqref{eq:CSinhomext3}, let $F$ be a solution of \eqref{eq:CSinhomext0} and let $\Phi$ be a compactly supported test function in $\R^{\d+1}$. By dominated convergence,
\begin{align}
&\int_{\R^{\d+1}}\Big(|z|^{1-s}\Phi -\div(|z|^{1-s}\nab \Phi\Big) F  \nn\\
&= \lim_{\epsilon\rightarrow 0^+}\Bigg(\int_{\{|z|\ge \epsilon\} }|z|^{1-s}\Phi F - \int_{\{|z| \ge \epsilon\}}\div(|z|^{1-s}\nab\Phi) F\Bigg). \label{eq:CSpf0}
\end{align}
Integrating by parts twice,  we see that
\begin{multline}\label{eq:CSpf1}
- \int_{\{|z|\ge\epsilon\}}\div(|z|^{1-s}\nab\Phi) F  = -\int_{\{|z|=\epsilon\}}|z|^{1-s}\p_{\nu}\Phi F  +\int_{\{|z|=\epsilon\}}|z|^{1-s}\Phi \p_{\nu}F \\
-  \int_{\{|z|\ge\epsilon\}}\Phi\div(|z|^{1-s}\nab F). 
\end{multline}
It is immediate from \eqref{eq:CSinhomext0} that
\begin{align}\label{eq:CSpf2}
0 = \int_{\{|z|\ge\epsilon\}}z^{1-s}\Phi F - \int_{\{|z|\ge\epsilon\}}\Phi\div(|z|^{1-s}\nab F).
\end{align}
Now using that $F$ is even in the $z$ coordinate,
\begin{align}
\Bigg|\int_{\{|z|=\epsilon\}}|z|^{1-s}\p_{\nu}\Phi F\Bigg|  &= \epsilon^{1-s}\Bigg|\int_{\R^\d}\Big(\p_z\Phi(\cdot,\epsilon) - \p_z\Phi(\cdot,-\epsilon)\Big) F(\cdot,\epsilon)\Bigg| \nn\\
&\le 2\epsilon^{2-s}\|\sup_{|z|\le\epsilon}\p_z^2 \Phi(\cdot,z)\|_{L^1} \|F\|_{L^\infty} ,
\end{align}
where we use the mean-value theorem to obtain the last line. That $\|F\|_{L^\infty}<\infty$ follows from \cref{rem:CSdecay} below. Since $s<2$, the preceding expression vanishes as $\epsilon\rightarrow 0^+$. 
For the remaining boundary term, observe that
\begin{align}
\int_{\{|z|=\epsilon\}}|z|^{1-s}\Phi \p_{\nu}F = -\int_{\R^\d}\epsilon^{1-s}\Big(\Phi(\cdot,\epsilon)\p_z F(\cdot,\epsilon) - \Phi(\cdot,-\epsilon)\p_zF(\cdot,-\epsilon)\Big).
\end{align}
Using that $\frac{1}{s} z^{1-s}\p_z F(\cdot, z)\rightarrow \frac{\Gamma(-\frac{s}{2})}{2^{s}\Gamma(\frac{s}{2})}\jp{\nab}^{s}f$ in $L^2(\R^\d)$ as $z\rightarrow 0^+$, it follows that the preceding right-hand side converges to
\begin{align}
2^{1-s} s \frac{\Gamma(-s/2)}{\Gamma(s/2)}\int_{\R^\d}\Phi(\cdot,0)\jp{\nab}^s f
\end{align}
as $\epsilon\rightarrow 0^+$. 
After a little bookkeeping, we conclude that
\begin{align}
\int_{\R^{\d+1}}\Big(|z|^{1-s}\Phi -\div(|z|^{1-s}\nab \Phi)\Big) F = 2^{1-s} s \frac{\Gamma(-s/2)}{\Gamma(s/2)}\int_{\R^\d}\Phi(\cdot,0)\jp{\nab}^s f.
\end{align}
As $\Phi\in C_c^\infty(\R^{\d+1})$ was arbitrary, this establishes the assertion \eqref{eq:CSinhomext3}.

To see the last assertion \eqref{eq:CSinhomext4}, we note that \eqref{eq:CSinhomext3} implies the identity
\begin{align}
\|f\|_{H^{s/2}}^2 = \int_{\R^\d}f \jp{\nab}^{s}f  = \frac{1}{\mathsf{c}_{\d,s}}\int_{\R^{\d+1}}F \Big(|z|^{1-s}F - \div(|z|^{1-s}\nab F)\Big) = \frac{1}{\mathsf{c}_{\d,s}}\int_{\R^{\d+1}}|z|^{1-s}\Big(|F|^2 + |\nab F|^2\Big),
\end{align}
where the last equality follows from an integration by parts, which is justified by \cref{rem:CSdecay} below. The proof of the lemma is now complete.
\end{proof}

The following lemma provides regularity and decay estimates for the Poisson extension $F$ in terms of the boundary data $f$. In particular, if $f\in \Sc(\R^\d)$, which we may always reduce to through density, there are no issues of regularity/decay in the computations above. The lemma is stated in greater generality than we need, but some of the estimates do not appear to be recorded in the literature and may be useful elsewhere.

\begin{lemma}\label{rem:CSdecay}
Let $f\in \Sc(\R^\d)$. For any $1\le p\le \infty$ and integer $k\ge 0$,
\begin{align}\label{eq:CSdecay0}
\|\nab_x^{\otimes k}F\|_{L^p} \le \|\nab_x^{\otimes k}f\|_{L^p}.
\end{align}
For any integer $k\ge 0$,
\begin{align}\label{eq:CSdecay1}
\forall z> 0, \qquad \|\nab_x^{\otimes k} F(\cdot,z)\|_{L_x^\infty} \le C_{\d,s,k}|z|^{-\d}\|\nab_x^{\otimes k}f\|_{L^1}.
\end{align}
For any integers $m,k\ge 0$,
\begin{align}\label{eq:CSdecay2}
\forall x\in \R^\d, \qquad \|\nab_x^{\otimes k}F(x,\cdot)\|_{L_z^\infty} \le C_{\d,s,m}\|\jp{\cdot}^m \nab^{\otimes k}f\|_{L^\infty}\jp{x}^{-m}.
\end{align}
For any integers $n \ge 1$ and $k\ge 0$,
\begin{align}\label{eq:CSdecay3}
\forall (x,z)\in \R^{\d+1}, \qquad |\p_z^n\nab_x^{\otimes k}F(x,z)| \le C_{\d,n,s}|z|^{-n}\min\Bigg(\|\jp{\cdot}^m \nab^{\otimes k}f\|_{L^\infty}\jp{x}^{-m}, |z|^{-\d}\|\nab_x^{\otimes k}f\|_{L^1} \Bigg).
\end{align}
\end{lemma}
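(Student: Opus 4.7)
The plan is to establish each of the four estimates in turn, starting from the Poisson representation \eqref{eq:CSinhomext1}. Write $e^{z^2/(4t)(\Delta-I)}=e^{-z^2/(4t)}e^{z^2/(4t)\Delta}$, separating the contractive heat semigroup from the scalar factor $e^{-z^2/(4t)}\le 1$. Since $f\in\Sc(\R^\d)$, all differentiations under the integral are routinely justified by dominated convergence, and $\nabla_x^{\otimes k}$ commutes with the semigroup.

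Estimate \eqref{eq:CSdecay0} follows from $L^p$-contractivity of the heat semigroup, $\|e^{\tau\Delta}g\|_{L^p}\leq\|g\|_{L^p}$, combined with $e^{-z^2/(4t)}\le 1$ and the normalization $\int_0^\infty e^{-t}t^{s/2-1}dt=\Gamma(s/2)$. Estimate \eqref{eq:CSdecay1} follows by using instead the $L^1$-to-$L^\infty$ heat-kernel bound $\|e^{\tau\Delta}g\|_{L^\infty}\le(4\pi\tau)^{-\d/2}\|g\|_{L^1}$ with $\tau=z^2/(4t)$, which extracts the factor $|z|^{-\d}$; the remaining $t$-integral is bounded by $C\int_0^\infty e^{-t}t^{(\d+s)/2-1}dt=C\,\Gamma((\d+s)/2)$. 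Estimate \eqref{eq:CSdecay2} is obtained by writing the heat action as a convolution and applying Peetre's inequality $\jp{y}^{-m}\le C_m\jp{x}^{-m}\jp{x-y}^{m}$ to transfer decay to $\jp{x}^{-m}$; the Gaussian moments $\int e^{-|x-y|^2 t/z^2}\jp{x-y}^{m}dy$ are polynomially controlled in $z/\sqrt{t}$ and absorbed by $e^{-z^2/(4t)}$ in the outer integral.

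The main work is \eqref{eq:CSdecay3}, because the right-hand side must depend only on $\nabla^{\otimes k}f$ and not on higher derivatives of $f$. The key observation is that $F(x,z)=h(x,z^2)$ depends on $z$ only through $z^2$, so by the chain rule
\begin{equation*}
\p_z^n F(x,z)=\sum_{j=0}^{\lfloor n/2\rfloor}c_{n,j}\,z^{n-2j}\,(\p_w^{n-j}h)(x,z^2).
\end{equation*}
To estimate $\p_w^m h$, differentiate \eqref{eq:CSinhomext1} under the integral and use the identity $(\Delta-I)e^{w/(4t)(\Delta-I)}g=-\tfrac{4t^2}{w}\tfrac{d}{dt}e^{w/(4t)(\Delta-I)}g$ to trade each $(\Delta-I)$ factor produced by $\p_w$ for a $t$-derivative, followed by integration by parts in $t$. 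One full step gives
\begin{equation*}
\p_w h(x,w)=\frac{1}{w\,\Gamma(s/2)}\int_0^\infty\Big[\tfrac{s}{2}\,e^{-t}t^{s/2-1}-e^{-t}t^{s/2}\Big]\big(e^{w/(4t)(\Delta-I)}f\big)(x)\,dt,
\end{equation*}
i.e.\ $w^{-1}$ times a finite linear combination of integrals of the same type as $F$ with shifted exponents, to which the bounds of \eqref{eq:CSdecay0}--\eqref{eq:CSdecay2} apply verbatim with only $s$-dependent constants. Iterating $m$ times yields $|\p_w^m h(x,w)|\lesssim w^{-m}$ times the same bounds; substituting into the chain-rule formula produces the net scaling $\sum_j|z|^{n-2j}\cdot|z|^{-2(n-j)}=|z|^{-n}$, which is exactly \eqref{eq:CSdecay3}.

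The main obstacle is verifying the integration-by-parts step cleanly: one must check that boundary terms vanish at $t\to 0^+$ and $t\to\infty$ (the former thanks to the factor $e^{-w/(4t)}$ which beats every polynomial in $t$ for $w>0$, the latter thanks to $e^{-t}$), and carefully track through the induction that after $m$ steps one still has a finite combination of integrals of the same form as $F$, so that the uniform estimates from (1)--(3) apply term-by-term.
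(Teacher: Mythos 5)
Your proof is correct, and it reaches the same structural conclusion as the paper's — that $z$-derivatives of $F$ can be written as finite combinations of negative powers of $z$ times copies of the Poisson extension with shifted parameter $s$, to which the bounds \eqref{eq:CSdecay0}--\eqref{eq:CSdecay2} (valid uniformly for all $s>0$) apply — but the implementation differs in two places. For \eqref{eq:CSdecay2} the paper writes out the Gaussian kernel, splits into the regions $|y|\le\frac12|x|$ and $|y|>\frac12|x|$, and runs a case analysis in $z^2/t$ versus $|x|^{2(1-\delta)}$ to extract (in fact exponential) decay in $|x|$; your Peetre-inequality argument, with the polynomial Gaussian moments absorbed by the factor $e^{-z^2/(4t)}$ coming from the $-I$ part of the generator, is shorter and gives exactly the stated polynomial decay $\jp{x}^{-m}$, which is all that is needed downstream. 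For \eqref{eq:CSdecay3} the paper rescales $t\mapsto t/z^2$ so that the $z$-dependence becomes explicit ($z^{s}$ and $e^{-z^2t}$) and then differentiates directly, obtaining the recursion $\p_zF_s=sz^{-1}F_s-2\tfrac{\Gamma(\frac{s+1}{2})}{\Gamma(\frac s2)}F_{s+1}$ and iterating; you instead exploit $F(x,z)=h(x,z^2)$, trade $(\Delta-I)$ for a $t$-derivative via the semigroup ODE, and integrate by parts in $t$, producing the analogous recursion with $s\mapsto s+2$ per $w$-derivative and a $w^{-1}$ factor, then recombine with the chain rule so that $|z|^{n-2j}\,w^{-(n-j)}=|z|^{-n}$. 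Both routes require the same two verifications, which you flag correctly: the vanishing of the boundary terms in $t$ (guaranteed by $e^{-w/(4t)}$ as $t\to0^+$ and $e^{-t}$ as $t\to\infty$), and that the iteration — including the product-rule terms from differentiating the accumulated $w^{-1}$ factors — keeps you inside the class of parameter-shifted Poisson integrals, so that the $s$-uniform constants in \eqref{eq:CSdecay1}--\eqref{eq:CSdecay2} can be invoked term by term, exactly as the paper does after remarking that those estimates hold for all $s>0$.
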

\begin{proof}
For assertion \eqref{eq:CSdecay0}, Young's inequality implies that $\sup_{z>0}\|F(\cdot,z)\|_{L^p} \le C\|f\|_{L^p}$. Moreover, since $\nab_x$ commutes with $e^{\frac{z^2}{4t}\Delta}$, the same is true with $f$ replaced by $\nab_x^{\otimes k}f$, yielding the desired result.

For \eqref{eq:CSdecay1}, we use Young's inequality to bound $\|e^{\frac{z^2}{4t}\Delta}f\|_{L^\infty} \le C_\d(z^2/4t)^{-\d/2}\|f\|_{L^1}$, obtaining that
\begin{align}
|F(x,z)| \le \frac{C_\d}{\Gamma(\frac{s}{2})}\|f\|_{L^1}|z|^{-\d}\int_0^\infty e^{-t}t^{\d/2}\frac{dt}{t^{1-\frac{s}{2}}}.
\end{align}
As we may repeat the logic with $f$ replaced by $\nab_x^{\otimes k}f$, we conclude the desired estimate.

For \eqref{eq:CSdecay2}, we write
\begin{align}
e^{\frac{z^2}{4t}\Delta}f(x) = (\pi z^2/4t)^{-\frac{\d}{2}}\int_{\R^\d} e^{-\frac{4t}{z^2}|x-y|^2}f(y),
\end{align}
and split $\int_{\R^\d} = \int_{|y|\le \frac12|x|} + \int_{|y|>\frac12|x|}$. For the region $|y|>\frac12|x|$, we trivially bound,
\begin{align}
(\pi z^2/4t)^{-\frac{\d}{2}}\int_{|y|>\frac12|x|} e^{-\frac{4t}{z^2}|x-y|^2}|f(y)| &\le \|\jp{\cdot}^{m}f\|_{L^\infty} \jp{x/2}^{-m} (\pi z^2/4t)^{-\frac{\d}{2}}\int_{|y|>\frac12|x|}e^{-\frac{4t}{z^2}|x-y|^2} \nn\\
&\le 2^m\|\jp{\cdot}^{m}f\|_{L^\infty} \jp{x}^{-m} .
\end{align}
Hence,
\begin{align}
 \frac{1}{\Gamma(\frac{s}{2})}\int_0^\infty e^{-t - \frac{z^2}{4t}} (z^2/4t)^{-\frac{\d}{2}}\int_{|y|>\frac12|x|} e^{-\frac{4t}{z^2}|x-y|^2}|f(y)| dy \frac{dt}{t^{1-\frac{s}{2}}} \le C_{s,m}\|\jp{\cdot}^{m}f\|_{L^\infty} \jp{x}^{-m}
\end{align}
For the region $|y|\le \frac12|x|$, first note that $e^{-\frac{2t}{z^2}|x-y|^2} \le e^{-\frac{t}{2z^2}|x|^2}$. Now introduce $\delta \in (0,1)$ and consider the cases $\frac{z^2}{t} \le |x|^{2(1-\delta)}$ and $\frac{z^2}{t} > |x|^{2(1-\delta)}$. In the former, we use that $e^{-\frac{t}{2z^2}|x|^2} \le e^{-|x|^{2\delta}}$; and in the latter, we use that $e^{-\frac{z^2}{4t}} \le e^{-\frac14|x|^{2(1-\delta)}}$. All together, we find that
\begin{align}
 &\frac{1}{\Gamma(\frac{s}{2})}\int_0^\infty e^{-t - \frac{z^2}{4t}} (z^2/4t)^{-\frac{\d}{2}}\int_{|y|\le \frac12|x|} e^{-\frac{4t}{z^2}|x-y|^2}|f(y)| dy \frac{dt}{t^{1-\frac{s}{2}}} \nn\\
 &\le C_{\d,s}\|f\|_{L^\infty}\int_0^\infty e^{-\frac{t}{2z^2}|x|^2} e^{-t - \frac{z^2}{4t}} \frac{dt}{t^{1-\frac{s}{2}}} \nn\\
 &\le C_{\d,s}\|f\|_{L^\infty}\Bigg(\int_0^{\frac{z^2}{|x|^{2(1-\delta)}}}e^{-\frac14|x|^{2(1-\delta)}}e^{-\frac{t}{2z^2}|x|^2} e^{-t} \frac{dt}{t^{1-\frac{s}{2}}} + \int_{\frac{z^2}{|x|^{2(1-\delta)}}}^\infty e^{-|x|^{2\delta}} e^{-t - \frac{z^2}{4t}}\frac{dt}{t^{1-\frac{s}{2}}} \Bigg)  \nn\\
 &\le C_{\d,s}'\|f\|_{L^\infty}\Big(e^{-\frac14|x|^{2(1-\delta)}} + e^{-|x|^{2\delta}}\Big).
\end{align}
The two terms on the final line may be balanced by choosing $\delta= \frac12$. Replacing $f$ by $\nab_x^{\otimes k}f$ then yields the desired \eqref{eq:CSdecay2}.

Lastly, we turn to \eqref{eq:CSdecay3}. For the computations below, it will be convenient to introduce the subscript in $F_s$ to emphasize the dependence of \eqref{eq:CSinhomext1} on $s$. Making the change of variable $t/z^2\mapsto t$, we rewrite \eqref{eq:CSinhomext1} as
\begin{align}
F_s(x,z) = \frac{z^{s}}{\Gamma(\frac{s}{2})}\int_0^\infty e^{-z^2t}\Big(e^{\frac{1}{4t}(\Delta-I)}f\Big)(x)\frac{dt}{t^{1-\frac{s}{2}}}.
\end{align}
Hence,
\begin{align}
\p_z F_s(x,z) &= \frac{sz^{s-1}}{\Gamma(\frac{s}{2})}\int_0^\infty e^{-z^2t}\Big(e^{\frac{1}{4t}(\Delta-I)}f\Big)(x)\frac{dt}{t^{1-\frac{s}{2}}}  - \frac{2z^{s+1}}{\Gamma(\frac{s}{2})}\int_0^\infty t e^{-z^2t}\Big(e^{\frac{1}{4t}(\Delta-I)}f\Big)(x)\frac{dt}{t^{1-\frac{s}{2}}} \nn\\
&=s z^{-1}F_s(x,z)  - 2\frac{\Gamma(\frac{s+1}{2})}{\Gamma(\frac{s}{2})}F_{s+1}(x,z).
\end{align}
Iteration of this relation yields
\begin{align}\label{eq:pznFrel}
\p_z^n F_s(x,z) = \sum_{j=0}^n C_{s,n,j} z^{j-n} F_{s+j}(x,z).
\end{align}
The reader will note that our estimates \eqref{eq:CSdecay0}-\eqref{eq:CSdecay2} are equally valid for all $s>0$, not just $s\in (0,2)$. Furthermore, note that for any $\ep \in (0,1)$,
\begin{align}
|F_{s+j}(x,z)| \le C_{\ep,s}z^{-j}\int_0^\infty e^{-\ep z^2 t}\Big(e^{\frac{1}{4t}(\Delta-I)}|f|\Big)(x)\frac{dt}{t^{1-\frac{s}{2}}}  = C_{\ep,s}'z^{-j}\tl{F}_s(x,\sqrt{\ep}z),
\end{align}
where $\tl{F}_s$ denotes $F_s$ with $f$ replaced by $|f|$. Hence, we may combine these estimates with the relation \eqref{eq:pznFrel}. Replacing $f$ by $\nab_x^{\otimes k}f$ then completes the proof.

\end{proof}


We conclude this section with the proof of \cref{prop:comm}.

\begin{proof}[Proof of \cref{prop:comm}]
Write $\alpha=2m+r$ for integer $m\ge 0$ and $r \in (0,2]$. We will prove by induction that if for some $m\ge 0$, \cref{prop:comm} is true for all $\al=2m+r$ with $r\in (0,2]$, then it is true for all $\al=2(m+1)+r$ with $r\in (0,2]$. 

We begin with the base case $m=0$. Given the datum $f$, let $F$ denote the Poisson extension to $\R^{\d+1}$ given by \cref{lem:CSinhomext}.
With an abuse of notation, we let $v$ denote the trivial extension to a $(\d+1)$-vector field, i.e. the map $(x,z) \mapsto (v(x),0)$.
Inserting the identity  $F\zg -\div(\zg\nab F) = \jp{\nab}^{r}f\delta_{\R^\d\times\{0\}}$ and integrating by parts, we see that
\begin{align}
\int_{\R^\d} v\cdot\nabla f \jp{\nab}^r f&=\int_{\R^{\d+1}} v\cdot \Big(\frac12\nab(F^2)|z|^{\gamma}-\nab F\div(|z|^\gamma\nabla F)\Big)\nn\\
&=-\frac{1}{2}\int_{\R^{\d+1}} \zg \div v F^2+\frac12\int_{\R^{\d+1}}\nabla  v:\comm{\nabla F}{\nabla F},
\end{align}
where $\comm{\nab F}{\nab F}$ is the stress-energy tensor associated to $\nabla F$, that is for vector fields $u_1=(u_1^i),u_2=(u_2^j)$,
\begin{align}\label{eq:SETdef}
\comm{u_1}{u_2}_{ij} \coloneqq \zg\Big(u_1^i u_2^j+u_1^ju_2^i - (u_1\cdot u_2)\delta_{ij}\Big), \qquad i,j\in [\d].
\end{align}
Implicitly, we are using that the extended vector field $v$ has zero $\d+1$-component and that $\nab_x$ commutes with $\zg$. By Cauchy-Schwarz, the preceding right-hand is less or equal to
\begin{align}
C\|\nabla v\|_{L^\infty}\int_{\R^{\d+1}}\zg (F^2+|\nabla F|^2)=C\|\nabla v\|_{L^\infty}\| f\|_{H^{r/2}}^2.
\end{align}
where $C=C(\d)>0$ and the equality is by \cref{lem:CSinhomext}\eqref{eq:CSinhomext4}.

As our induction hypothesis, suppose that for some integer $m\ge 0$, \cref{prop:comm} holds for all $\al = 2m+r$, where $r\in (0,2]$. We will show that \cref{prop:comm} holds for all $\al = 2(m+1)+r$, where $r\in (0,2]$.

Writing $\jp{\nab}^{2(m+1)+r} = (I-\Delta)\jp{\nab}^{2m+r}$ and integrating by parts once, we find that
\begin{align}
\int_{\R^\d} v\cdot\nabla f \jp{\nab}^{2(m+1) +r} f=&\int_{\R^\d} v\cdot \nabla f \jp{\nab}^{2m+r} f+\int_{\R^\d} v\cdot \nabla \partial_j f \jp{\nab}^{2m+r}\partial_j f \nn\\
&\ph+\int_{\R^\d} \partial_j v\cdot \nabla f \jp{\nab}^{2m+r}\partial_j f,\label{eq:KPih1}
\end{align}
where we follow the convention of Einstein summation. Recall the notation $A_{v,\al}$ from \eqref{eq:Avaldef}, which we extend in the obvious manner to allow for $u$ to be tensor-valued. 
By the induction hypothesis, the magnitude of the first term on the right-hand side is $\le$ 
\begin{align}\label{eq:KPih2}
CA_{v,\al-2}\|f\|_{{H}^{m+\frac{r}{2}}}^2 \le CA_{v,\al-2}\|f\|_{{H}^{m+1+\frac{r}{2}}}^2,
\end{align}
where the second inequality follows from the trivial embedding $\|\cdot\|_{H^{s}} \le \|\cdot\|_{H^{s'}}$ for $s\le s'$.\footnote{Remark that here, the fact that the we are considering the inhomogeneous Sobolev \emph{norm} as opposed to the homogeneous \emph{seminorm} is crucial because otherwise, this embedding is false.}
The induction hypothesis (with $f$ replaced by $\p_j f$) also implies that the second term on the right-hand side of \eqref{eq:KPih1} is bounded by
\begin{align}\label{eq:KPih2'}
 CA_{v,\al-2}\sum_{j=1}^\d\|\partial_j f\|_{{H}^{m+\frac{r}{2}}}^2  \leq C A_{v,\al-2}\|f\|_{{H}^{m+1+\frac{r}{2}}}^2.
\end{align}

It remains to bound the third term on the right-hand side of \eqref{eq:KPih1}. Expanding $(I-\Delta)^m$ by binomial formula and integrating by parts $k$ times, we find
\begin{align}
\int_{\R^\d} \partial_j v\cdot \nabla f\jp{\nab}^{2m+r}\partial_j f &=\sum_{k=0}^m{m\choose k}\int_{\R^\d} \partial_j v\cdot \nabla f(-\Delta)^k\jp{\nab}^{r}\partial_j f \nn\\
&=\sum_{k=0}^m{m\choose k}\int_{\R^\d}\jp{\nab}^{r/2}\nabla^{\otimes k}(\partial_j v\cdot \nabla f) : \jp{\nab}^{r/2}\nabla^{\otimes k}\partial_jf.
\end{align}
For all $0\leq k\leq m$, the Cauchy-Schwarz inequality implies that
\begin{align}
\bigg|\int_{\R^\d} \langle\nabla\rangle^{r/2}\nabla^{\otimes k}(\partial_j v\cdot \nabla f) : \jp{\nab}^{r/2}\nabla^{\otimes k}\partial_j f\bigg| &\leq \|\jp{\nab}^{r/2}\nabla^{\otimes k}(\partial_j v\cdot \nabla f)\|_{L^2}\|\langle\nabla\rangle^{r/2}\nabla^{\otimes k}\partial_jf\|_{L^2} \nn\\
&\le  C \tl{A}_{\nab v,k,r} \|\nab f\|_{H^{k+\frac{r}{2}}} \|f\|_{H^{k+1+\frac{r}{2}}} \nn\\
&\le C \tl{A}_{\nab v,k,r} \|f\|_{H^{k+1+\frac{r}{2}}}^2, 
\end{align}
where the second line follows from applying Plancherel's theorem to the second factor and \cref{lem:KP} to the first factor on the right-hand side of the first line (recall the notation \eqref{eq:tlAfalrdef}), and the third line follows from another application of Plancherel. Majorizing each factor $\|f\|_{H^{k+1+\frac{r}{2}}} \le \|f\|_{H^{m+1+\frac{r}{2}}}$, for $0\le k\le m$, it follows now that
\begin{align}\label{eq:KPih3}
\bigg|\int_{\R^\d} \partial_j v\cdot \nabla f\jp{\nab}^{2m+r}\partial_j f\bigg| \le C \|f\|_{H^{m+1+\frac{r}{2}}}^2\sum_{k=0}^m {m\choose k}\tl{A}_{\nab v,k,r}.
\end{align}

Combining the estimates \eqref{eq:KPih1}, \eqref{eq:KPih2}, \eqref{eq:KPih2'}, \eqref{eq:KPih3}, we obtain that
\begin{align}
\le C\|f\|_{{H}^{m+1+\frac{r}{2}}}^2\Bigg(A_{v,\al-2} +\sum_{k=0}^m {m\choose k}\tl{A}_{\nab v,k,r}\Bigg).
\end{align}
Noting that $\al-2 =2m + r$ and $\tl{A}_{\cdot,k,r} \le \tl{A}_{\cdot,m,r}$ for $k\le m$ completes the proof of the induction step and therefore of the proposition.
\end{proof}

\section{Renormalized commutator estimate}\label{sec:rcom}
In this section, we give the proof of \cref{thm:FI}. As advertised in the introduction, we deduce \cref{thm:FI} from the following more general result  for $(\s,\as)$-admissible potentials. The proof proceeds by combining \cref{prop:MEmon} and \cref{cor:scaledcomm}.

\begin{thm}\label{thm:FI'}
Let $\g$ be an $(\s,\as)$-admissible potential for $\as\in (\d,\d+2)$, and let $\frac{\d}{\d-\s}<p\le \infty$. There exists a constant $C= C(\d,\s,p,\as)>0$ such that the following holds. Let $\mu \in L^1(\R^\d)\cap L^p(\R^\d)$ with $\int_{\R^\d}\mu=1$.  Let  $v:\R^\d\rightarrow\R^\d$ be a Lipschitz vector field. For any pairwise distinct configuration $\ux_N \in (\R^\d)^N$, it holds for $\la = (N\|\mu\|_{L^p})^{-\frac{p}{\d(p-1)}}$ that
\begin{multline}
\bigg|\int_{(\R^\d)^2\setminus\triangle}(v(x)-v(y))\cdot\nabla\g(x-y)d\Big(\frac1N\sum_{i=1}^N\delta_{x_i}-\mu\Big)^{\otimes 2}(x,y)\bigg| \\
\leq C{ C_\zeta^2}\Big(\|\nab v\|_{L^\infty}  + \|\Dm^{\frac{\as}{2}}v\|_{L^{\frac{2\d}{\as-2}}}{\indic_{\as>2}}\Big)\Big(\Fr_N(\ux_N,\mu) {+}  \frac{(-\log\la + C + C_\rho)}{2N}\indic_{\s=0} +C C_\zeta\|\mu\|_{L^p}\la^{\d-\s}\Big),
\end{multline}
where $C_\zeta$ is as in \cref{def:Rtype} and $C_\rho = 0$ if $\rho=0$.
\end{thm}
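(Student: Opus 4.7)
The plan is to combine the potential truncation $\g = \g_\eta + \f_\eta$ developed in \cref{sec:ME} with the unrenormalized commutator estimate from \cref{cor:scaledcomm}. Set $f \coloneqq \frac{1}{N}\sum_{i=1}^N\delta_{x_i} - \mu$ and split the integrand via $\nabla \g = \nabla \g_\eta + \nabla \f_\eta$, so that the left-hand side becomes a sum of two contributions treated separately; the free parameter $\eta > 0$ will be optimized at the end by setting $\eta = \lambda$.

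For the $\g_\eta$ piece, I first observe that $\g_\eta$ is continuous and bounded by \cref{prop:geta}(\ref{item:'geta1}) and radial, so the integrand vanishes at the diagonal (using that $v$ is Lipschitz and $\nabla\g_\eta(0)=0$), and the diagonal may therefore be reinserted. When $\s=0$, $f$ has zero mean, so \cref{cor:scaledcomm} applies, and by the density remark \cref{rem:commdens} (valid since $\g_\eta$ is bounded so that $\ipp{\g_\eta, f^{\otimes 2}} < \infty$) we get
\begin{equation*}
\bigg|\int_{(\R^\d)^2}(v(x)-v(y))\cdot \nabla \g_\eta(x-y)\, df^{\otimes 2}(x,y)\bigg| \le C B_v \int_{(\R^\d)^2} \g_\eta(x-y)\, df^{\otimes 2}(x,y),
\end{equation*}
with $B_v \coloneqq \|\nabla v\|_{L^\infty} + \|\Dm^{\as/2}v\|_{L^{2\d/(\as-2)}}\indic_{\as>2}$. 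By \cref{prop:MEmon} (discarding the nonnegative $\f_\eta$-terms), the last integral is controlled by $2\Fr_N(\ux_N,\mu)$ plus the announced additive errors.

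For the $\f_\eta$ piece, I invoke the pointwise bound of \cref{prop:geta}(\ref{item:'geta3}), $|x||\nabla \f_\eta(x)| \le CC_\zeta^2 \f_{\eta/c}(x)$, together with the Lipschitz regularity of $v$, to estimate $|(v(x)-v(y))\cdot \nabla \f_\eta(x-y)| \le CC_\zeta^2 \|\nabla v\|_{L^\infty} \f_{\eta/c}(x-y)$. Passing to total variation via $|f| \le \frac{1}{N}\sum_i \delta_{x_i} + \mu$ and expanding, the diagonal-excised double sum $\frac{1}{N^2}\sum_{i\ne j}\f_{\eta/c}(x_i-x_j)$ is controlled by another application of \cref{prop:MEmon} (at scale $\eta/c$), while the mixed sum $\frac{1}{N}\sum_i (\f_{\eta/c}\ast\mu)(x_i)$ and the pure $\mu$-integral $\int \f_{\eta/c}\ast\mu\, d\mu$ are both bounded by $CC_\zeta \|\mu\|_{L^p}\eta^{\d(p-1)/p - \s}$ via \cref{lem:feta}.

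Summing the two contributions and taking $\eta = \lambda$ to balance the $\g_{\Rs}(\eta)/N$ and $\|\mu\|_{L^p}\eta^{\d-\s}$-type errors yields the claimed inequality. The main bookkeeping is tracking the $C_\zeta$ factors, which appear quadratically in the final bound (once from \cref{prop:geta}(\ref{item:'geta3}) in the $\f_\eta$ step, once from \cref{prop:MEmon}), and verifying that the $\g_\eta(0)/N$ self-energy arising upon reinsertion of the diagonal is already absorbed into the additive error delivered by \cref{prop:MEmon}. No additional technical ingredient beyond the lemmas of \Cref{sec:ME,sec:KP} should be required.
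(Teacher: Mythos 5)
Your proposal follows essentially the same decomposition and chain of lemmas as the paper's proof: split $\nabla\g=\nabla\g_\eta+\nabla\f_\eta$, control the $\g_\eta$ piece with \cref{cor:scaledcomm} and \cref{prop:MEmon}, control the $\f_\eta$ piece with \cref{prop:geta}(\ref{item:'geta3}) and \cref{lem:feta}, and balance by taking $\eta=\lambda$. One small imprecision: the justification for reinserting the diagonal should not rest on ``$\nabla\g_\eta(0)=0$'' --- for $\as\in(\d,\d+1)$, $G_\as$ is only $\mathcal{C}^{\as-\d}$ and $\nabla G_\as$ can blow up at the origin (see \cref{lem:bespot}\eqref{eq:bespot2'}), so $\nabla\g_\eta$ need not even be defined there; the correct observation (\cref{rem:nabGsvan}) is that $x\otimes\nabla\g_\eta(x)$ extends H\"older-continuously to vanish at $x=0$, whence $|v(x)-v(y)|\,|\nabla\g_\eta(x-y)|\le\|\nabla v\|_{L^\infty}|x-y|\,|\nabla\g_\eta(x-y)|\to 0$ as $y\to x$.
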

\begin{proof}
Let $\eta>0$, the exact value of which will be specified momentarily. Adding and subtracting $\g_\eta$ and applying the triangle inequality, we find
\begin{multline}\label{eq:rcomm0}
\bigg|\int_{(\R^\d)^2\setminus\triangle}(v(x)-v(y))\cdot\nabla\g(x-y)d\big(\frac1N\sum_{i=1}^N\delta_{x_i}-\mu\big)^{\otimes 2}(x,y)\bigg| \\
\leq \bigg|\int_{(\R^\d)^2}(v(x)-v(y))\cdot\nabla\g_\eta(x-y)d\big(\frac1N\sum_{i=1}^N\delta_{x_i}-\mu\big)^{\otimes 2}(x,y)\bigg|\\
+\bigg|\int_{(\R^\d)^2\setminus\triangle}(v(x)-v(y))\cdot\nabla\f_\eta(x-y)d\big(\frac1N\sum_{i=1}^N\delta_{x_i}-\mu\big)^{\otimes 2}\bigg|.
\end{multline}
Since $|v(x)-v(y)| |\nab\g_\eta(x-y)|$ vanishes along the diagonal by \cref{rem:nabGsvan}, we have re-inserted the diagonal in the first integral on the right-hand side above.

As $\int_{(\R^\d)^2}\g_\eta(x-y)d\Big(\frac1N\sum_{i=1}^N \delta_{x_i}-\mu\Big)^{\otimes 2}<\infty$,  \cref{rem:commdens} and \cref{cor:scaledcomm} imply that
\begin{multline}\label{eq:rcomm1}
\bigg|\int_{(\R^\d)^2}(v(x)-v(y))\cdot\nabla\g_\eta(x-y)d\big(\frac1N\sum_{i=1}^N\delta_{x_i}-\mu\big)^{\otimes 2}(x,y)\bigg|
\\
\leq C\Big(\|\nab v\|_{L^\infty}  + \|\Dm^{\frac{\as}{2}}v\|_{L^{\frac{2\d}{\as-2}}}{\indic_{\as>2}}\Big)\int_{(\R^\d)^2} \g_\eta(x-y)d\big(\frac1N\sum_{i=1}^N\delta_{x_i}-\mu\big)^{\otimes 2}.
\end{multline}
On the other hand, applying the mean-value theorem to $(v(x)-v(y))\cdot\nab\f_\eta(x-y)$,  
\begin{align}\label{eq:rcomm2}
&\bigg|\int_{(\R^\d)^2\setminus\triangle}(v(x)-v(y))\cdot\nabla\f_\eta(x-y)d\big(\frac1N\sum_{i=1}^N\delta_{x_i}-\mu\big)^{\otimes 2}\bigg|\nn\\
&\leq\|\nabla v\|_{L^\infty}\int_{(\R^\d)^2\setminus\triangle}|x-y||\nabla \f_\eta(x-y)|d\Big(\frac1N\sum_{i=1}^N \delta_{x_i}+\mu\Big)^{\otimes 2} \nn\\
&\le C \|\nabla v\|_{L^\infty}\Bigg(\frac{ C_\zeta^2}{N^2}\sum_{1\le i\ne j\le N} \f_{\eta/c}(x_i-x_j) + C_\zeta\|\mu\|_{L^{p}}\eta^{\frac{\d(p-1)}{p}-\s}\Bigg),
\end{align}
where the third line follows from \cref{prop:geta}(\ref{item:'geta3}) and  estimate \eqref{eq:nabfetaHol} of \cref{lem:feta}.

Applying the estimates \eqref{eq:rcomm1}, \eqref{eq:rcomm2} to the right-hand side of \eqref{eq:rcomm0}, we arrive at
\begin{multline}
\bigg|\int_{(\R^\d)^2}(v(x)-v(y))\cdot\nabla\g(x-y)d\big(\frac1N\sum_{i=1}^N\delta_{x_i}-\mu\big)^{\otimes 2}(x,y)\bigg| \\
\leq C\Big(\|\nab v\|_{L^\infty}  + \|\Dm^{\frac{\as}{2}}v\|_{L^{\frac{2\d}{\as-2}}}{\indic_{\as>2}}\Big)\bigg(\int_{(\R^\d)^2} \g_{\eta}(x-y)d\big(\frac1N\sum_{i=1}^N\delta_{x_i}-\mu\big)^{\otimes 2}\\
+ {\frac{ C_\zeta^2}{N^2}\sum_{1\le i\ne j\le N} \f_{\eta/c}(x_i-x_j) + C_\zeta\|\mu\|_{L^{p}}\eta^{\frac{\d(p-1)}{p}-\s}}\bigg),
\end{multline}
Applying \cref{prop:MEmon} to the preceding expression and choosing $\eta=\la$, the conclusion of the proof follows after a little bookkeeping.
\end{proof}

\section{Application: optimal mean-field convergence rate}\label{sec:appMF}
This section is devoted to the mean-field limit and the analysis of the mean-field equation.

\subsection{Modulated energy bound}\label{ssec:appMFmain}
Using the main functional inequality of \cref{thm:FI'}, we now show convergence of the empirical measure for the mean-field particle dynamics \eqref{eq:MFode} to a (necessarily unique) solution of the limiting PDE \eqref{eq:MFlim} in the modulated energy distance with the optimal rate $N^{\frac{\s}{\d}-1}$. This proves \Cref{thm:mainMF,thm:mainMFs} (cf. \cite[Theorem 1.5]{RS2022}, \cite[Theorem 1.1]{RS2021}), which we obtain as a special case of the following result that holds for $(\s,\phi)$-admissible potentials.

\begin{thm}\label{thm:mainMF'}
Suppose that $0\le \s<\d$ and $\be = \infty$. Let $\g$ be $(\s,\as)$-admissible for $\as\in (\d,\d+2)$ and $\mathsf{V}$ be as in \cref{thm:mainMF}. Assume the equation \eqref{eq:MFlim} admits a solution $\mu \in L^\infty([0,T],\P(\R^\d)\cap L^\infty(\R^\d))$, for some $T>0$, satisfying \eqref{nmut}. If $\s=0$, then also assume that $\int_{\R^\d}\log(1+|x|)d\mu^t(x) < \infty$ for all $t\in [0,T]$.

Then there exists a constant  $C=C(\d,\s,\as)>0$ such that for any solution $\ux_N$ of \eqref{eq:MFode},
\begin{multline}
\Fr_N(\ux_N^t,\mu^t) + \frac{(-\log\la^t + C +C_\rho) }{2 N} \indic_{\s=0} + C C_\zeta \|\mu^t\|_{L^\infty}(\la^t)^{\d-\s}\\
 \le Ce^{C {C_\zeta^2}\mathcal{N}(u^t)}\Bigg(\Fr_N(\ux_N^0,\mu^0) + \sup_{\tau \in [0,t]}\Big( \frac{(-\log\la^\tau + C + C_\rho) }{2 N} \indic_{\s=0} + C C_\zeta \|\mu^\tau\|_{L^\infty}(\la^\tau)^{\d-\s}\Big)\Bigg),
\end{multline}
where $\la^t \coloneqq (N\|\mu^t\|_{L^\infty})^{-1/\d}$, $\mathcal{N}(u^t)$ is as in \eqref{nmut}, and $C_\rho=0$ if $\rho=0$.

Suppose now that $\s<\d-2$ and $\be \in (0,\infty]$. Let $\g$ be $(\s,\as)$-admissible such that $-\Delta\g$ is ($\s+2,\as)$-admissible for $\as \in (\d,\d+2)$. Assume the equation \eqref{eq:MFlims} admits a solution $\mu$ obeying the same properties as above. Then for any strong solution $\ux_N$ of \eqref{eq:MFsde},
\begin{multline}
\E\Big[\Fr_N(\ux_N^t,\mu^t)  + \frac{(-\log\la^t + C +C_\rho) }{2 N} \indic_{\s=0} + C C_\zeta \|\mu^t\|_{L^\infty}(\la^t)^{\d-\s}+\frac{C C_\zeta}{\be}\|\mu^t\|_{L^\infty} (\la^t)^{\s+2-\d}\Big] \\
 \le Ce^{C {C_\zeta^2}\mathcal{N}(u^t)}\Bigg(\Fr_N(\ux_N^0,\mu^0) \\
 + \sup_{\tau \in [0,t]}\Big(  + \frac{(-\log\la^\tau + C +C_\rho) }{2 N} \indic_{\s=0} + C C_\zeta \|\mu^\tau\|_{L^\infty}(\la^t)^{\d-\s}+\frac{C C_\zeta}{\be}\|\mu^\tau\|_{L^\infty} (\la^\tau)^{\d-\s-2}\Big)\Bigg).
\end{multline}
\end{thm}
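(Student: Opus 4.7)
The plan is the modulated-energy/commutator method: differentiate $\Fr_N(\ux_N^t,\mu^t)$ along the coupled flow, identify the leading term as a transport commutator to which \cref{thm:FI'} applies, and close via Gronwall. For the deterministic case, writing $f^t\coloneqq\frac1N\sum_i\delta_{x_i^t}-\mu^t$, $h_N^t\coloneqq\g\ast f^t$, and $u^t=-\M\nab\g\ast\mu^t+\mathsf{V}^t$, and using the radial symmetry convention that the self-interaction $\nab\g(0)$ contributes zero, a direct chain-rule calculation under \eqref{eq:MFode}--\eqref{eq:MFlim}, desymmetrized via $\nab\g(x-y)=-\nab\g(y-x)$, yields the exact identity
\begin{equation*}
\tfrac{d}{dt}\Fr_N(\ux_N^t,\mu^t)=\int_{\R^\d}\M\nab h_N^t\cdot\nab h_N^t\,d\mu_N^t-\tfrac12\int_{(\R^\d)^2\setminus\triangle}(u^t(x)-u^t(y))\cdot\nab\g(x-y)\,df^t(x)\,df^t(y).
\end{equation*}
The first integral is nonpositive by \eqref{eq:Mnd} and is dropped. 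Applying \cref{thm:FI'} with $v=u^t$ and $p=\infty$---legitimate by the regularity hypothesis \eqref{nmut}---gives $\frac{d}{dt}\Fr_N(\ux_N^t,\mu^t)\le CC_\zeta^2 A_{u^t}\bigl(\Fr_N(\ux_N^t,\mu^t)+E^t\bigr)$, where $A_{u^t}\coloneqq\|\nab u^t\|_{L^\infty}+\|\Dm^{\as/2}u^t\|_{L^{2\d/(\as-2)}}\indic_{\as>2}$ and $E^t$ stands for the additive error from \cref{thm:FI'}. Working with $G^t\coloneqq\Fr_N(\ux_N^t,\mu^t)+\sup_{\tau\le t}E^\tau$ converts this into $\frac{d}{dt}G^t\le CC_\zeta^2 A_{u^t}G^t$ a.e., and Gronwall delivers the first bound.

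In the stochastic case I would replace the chain rule by It\^o's formula along \eqref{eq:MFsde}--\eqref{eq:MFlims}. The drift part reproduces the deterministic identity, while the two $\frac{1}{\beta}$-contributions---the diffusion $\frac{1}{\beta}\Delta\mu^t$ in the PDE and the It\^o correction $\frac{1}{\beta}\Delta_{X_N}\Fr_N$ on the particle side---combine algebraically. A direct expansion shows $\Delta_{X_N}\Fr_N=-2\Fr_N^{-\Delta\g}(\ux_N^t,\mu^t)+\int\Delta h_N^t\,d\mu^t$, where $\Fr_N^{-\Delta\g}$ denotes the modulated energy with kernel $-\Delta\g$ in place of $\g$, and the $\int\Delta h_N^t\,d\mu^t$-piece exactly cancels the contribution $-\frac{1}{\beta}\int\Delta h_N^t\,d\mu^t$ arising from $\frac{1}{\beta}\Delta\mu^t$ in the density evolution. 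What survives is a single extra term $-\frac{2}{\beta}\E[\Fr_N^{-\Delta\g}(\ux_N^t,\mu^t)]$; since by hypothesis $-\Delta\g$ is $(\s+2,\as)$-admissible, \cref{prop:MEmon} applied to it yields the almost-positivity bound $\Fr_N^{-\Delta\g}+CC_\zeta\|\mu^t\|_{L^\infty}(\la^t)^{\d-\s-2}\ge 0$, so this extra term is bounded above by $\frac{2CC_\zeta}{\beta}\|\mu^t\|_{L^\infty}(\la^t)^{\d-\s-2}$. This is precisely the additional additive error in the stochastic claim, and Gronwall on the correspondingly augmented $\E[G^t]$ concludes.

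The only real technical obstacle, beyond bookkeeping, is the rigorous justification of the formal chain-rule/It\^o manipulations above despite the singularity of $\g$ on the diagonal. As in \cite{RS2022,RS2021}, this is handled by running the entire argument with the truncated potential $\g_\eta$ from \cref{ssec:MEpt'}---where all manipulations are genuinely smooth---and then passing $\eta\to 0^+$, using the energy controls of \cref{prop:MEmon} to absorb truncation errors, and the almost-sure non-collision of particles on $[0,T]$ to control the martingale residue in the diffusive case.
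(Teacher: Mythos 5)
Your proposal is correct and follows essentially the same route as the paper: cite the modulated-energy dissipation relation (from \cite{Serfaty2020,RS2024ss}), apply \cref{thm:FI'} to the commutator term after discarding the nonpositive $\M$-contribution, and close via Gronwall; for $\beta<\infty$, use It\^o's formula, observe the surviving term is $-\tfrac{2}{\beta}\Fr_N^{-\Delta\g}$ after the $\int\Delta h_N\,d\mu$ cancellation, and bound it via almost-positivity (\cref{rem:MElb}) of the modulated energy for the $(\s+2,\as)$-admissible potential $-\Delta\g$. Your normalization of the dissipation identity differs from the cited version by a factor (and a sign that is immaterial once you take absolute values via \cref{thm:FI'}), but that only changes the unnamed constant; otherwise the argument, including the deferral of rigorous justification to a truncation/$\eta\to 0^+$ limit as in \cite{RS2021}, matches the paper's.
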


\begin{remark}
Building upon \cref{rem:commrho}, let us note that we also have a mean-field convergence rate \emph{without} additive error if $\g$ is of the form \eqref{eq:grhoform} for $\as \in (\d,\d+2)$. This follows from the same proof for \cref{thm:mainMF'} below, only now simply applying \cref{cor:scaledcomm} with $\eta=0$ (i.e. no renormalization). For $\beta=\infty$, we have the estimate
\begin{align}
\Fr_N(\ux_N^t,\mu^t) \le Ce^{C\mathcal{N}(u^t)}\Fr_N(\ux_N^0,\mu^0).
\end{align}
\end{remark}

\begin{proof}[Proof of \cref{thm:mainMF'}]
We first consider the case $\beta=\infty$. We recall (e.g. see \cite[Lemma 2.1]{Serfaty2020} or \cite[Lemma 3.6]{RS2024ss}) that $\Fr_N(\ux_N^t,\mu^t)$ satisfies the differential inequality
\begin{align}
\frac{d}{dt}\Fr_N(\ux_N^t,\mu^t) \leq \int_{(\R^\d)^2\setminus\triangle}\nabla\g(x-y)\cdot\pa*{u^t(x)-u^t(y)}d\Big(\frac{1}{N}\sum_{i=1}^N\delta_{x_i^t}-\mu^t\Big)^{\otimes2}(x,y) .
\end{align}
where $u^t\coloneqq -\M\nabla\g\ast\mu^t+\mathsf{V}^t$. Since $\g$ is $(\s,\as)$-admissible, we may apply \cref{thm:FI'} pointwise in $t$ to the preceding right-hand side. Integrating in time on both sides of the resulting inequality and applying the fundamental theorem of calculus, it follows that
\begin{multline}
\Fr_N(\ux_N^t,\mu^t) + \frac{(-\log\la^t + C +C_\rho) }{2 N} \indic_{\s=0} + C C_\zeta \|\mu^t\|_{L^\infty}(\la^t)^{\d-\s} \leq \Fr_N(\ux_N^0,\mu^0)  \\ 
+\frac{(-\log\la^t + C +C_\rho) }{2 N} \indic_{\s=0} + C C_\zeta \|\mu^t\|_{L^\infty}(\la^t)^{\d-\s}\\
+ C{C_\zeta^2}\int_0^t \|\nabla u^\tau \|_{L^\infty}\Big(\Fr_N(\ux_N^\tau,\mu^\tau) + \frac{(-\log\la^t + C +C_\rho) }{2 N} \indic_{\s=0} + C C_\zeta \|\mu^t\|_{L^\infty}(\la^t)^{\d-\s}\Big)d\tau
\end{multline}
for some constant $C>0$ depending only on $\d,\s,\as$. 
Assuming that the constant $C>0$ above is sufficiently large, the left-hand side defines a nonnegative quantity  in view of \cref{rem:MElb}. An application of the Gr\"onwall-Bellman lemma then completes the proof.

We now consider the case $\beta<\infty$, for which we restrict to $\s<\d-2$. Proceeding formally (see \cite[Section 6]{RS2021} for the rigorous justification of the computation), It\^o's formula yields
\begin{multline}
d\Fr_N(\ux_N^t,\mu^t) \le \frac12\int_{(\R^\d)^2\setminus\triangle}\nab\g(x-y)\cdot(u^t(x)-u^t(y))d\Big(\frac1N\sum_{i=1}^N\delta_{x_i^t}-\mu^t\Big)^{\otimes 2}\\
+ \frac{1}{\be}\int_{(\R^\d)^2\setminus\triangle}\Delta\g(x-y)d\Big(\frac1N\sum_{i=1}^N\delta_{x_i^t}-\mu^t\Big)^{\otimes 2}dt + \sqrt{\frac{2}{\be}}\frac1N\sum_{i=1}^N \PV\int_{\R^\d}\nab\g(x_i^t-y)d\Big(\frac1N\sum_{i=1}^N\delta_{x_i}^t-\mu^t\Big)(y) \cdot dW_i^t,
\end{multline}
Taking expectations, the second term on the last line disappears, and rewriting in integral form yields
\begin{multline}\label{eq:ExpFNnoise}
\E\Big[\Fr_N(\ux_N^T,\mu^T)\Big] -  \E\Big[\Fr_N(\ux_N^0,\mu^0)\Big]\le  \frac12\int_0^t\E\Bigg[\int_{(\R^\d)^2\setminus\triangle}\nab\g(x-y)\cdot(u^t(x)-u^t(y))d\Big(\frac1N\sum_{i=1}^N\delta_{x_i^t}-\mu^t\Big)^{\otimes 2}\Bigg]dt \\
+\frac{1}{\be}\int_0^t\E\Bigg[\int_{(\R^\d)^2\setminus\triangle}\Delta\g(x-y)d\Big(\frac1N\sum_{i=1}^N\delta_{x_i^t}-\mu^t\Big)^{\otimes 2}\Bigg]dt 
\end{multline}
By assumption that $\Delta\g$ is $(\s+2,\as)$-admissible, we may use \cref{rem:MElb} with $\s$ replaced by $\s+2<\d$ (here, we are using the sub-Coulomb assumption) to trivially bound
\begin{align}
\frac{1}{\be}\int_{(\R^\d)^2\setminus\triangle}\Delta\g(x-y)d\Big(\frac1N\sum_{i=1}^N\delta_{x_i^t}-\mu^t\Big)^{\otimes 2} \le \frac{C C_\zeta}{\be}\|\mu^t\|_{L^\infty}(\la^t)^{\d-\s-2}.
\end{align}
Inserting into \eqref{eq:ExpFNnoise} above, we can now conclude with Gr\"onwall as before, using the linearity of expectation.
\end{proof}

\subsection{Mean-field regularity}\label{ssec:appMFreg}
Suppose that $\mu^t \in\mathcal{P}(\R^\d)$ is a solution to \eqref{eq:MFlims}. We conclude the main body of the paper by verifying that for exact sub-Coulomb Riesz potentials, if $v^t = \M\nab\g\ast\mu^t$, then the condition
\begin{align}\label{eq:vfregcon}
\sup_{t\in [0,T]}\Big( \|\nab v^t\|_{L^\infty} + \|\Dm^{\frac{\as}{2}}v^t\|_{L^{\frac{2\d}{\as-2}}}\Big) <\infty,
\end{align}
for $\as\in (\d,\d+2)$, is satisfied provided that the initial density $\mu^0$ is sufficiently regular. 
We only consider the exact Riesz case---again for reasons of simplicity---and leave it to the reader to generalize the results of this subsection to the Riesz-type case. Moreover, we only consider the sub-Coulomb case because the Coulomb/super-Coulomb case only features $\|\nab v^t\|_{L^\infty}$, the control of which has already been established, e.g. see \cite{CJ2021}.  The addition of an external field $\mathsf{V}^t$ poses no additional complications. It only adds extra terms to the computation (cf. \cref{rem:extVFreg}), and while $\|\mu^t\|_{L^p}$ is no longer monotone, it is controlled by $\|\mu^0\|_{L^p}$ locally uniformly in time.

We focus here only on the \emph{a priori} estimates: existence of such solutions follows by standard vanishing viscosity arguments (e.g. see \cite[Sections 5,6]{BIK2015}), while uniqueness follows from Cauchy-Lipschitz theory. Throughout this section, we assume that $\mu^t$ is a smooth, probability density-valued solution of the mean-field equation.

The satisfaction of the condition \eqref{eq:vfregcon} is a consequence of the following two lemmas and the nonincrease of $\|\mu^t\|_{L^r}$ for any $1\le r\le \infty$. For the latter fact, see the proof of \cite[Remark 3.4]{RS2022}.

\begin{lemma}\label{lem:vfreg1}
Let $v\coloneqq \M\nab\g\ast f$ for any $f\in\Sc(\R^\d)$. Then it holds that for any $p>\frac{\d}{\d-\s-2}$
\begin{align}\label{eq:vfreg11}
\|\nab v\|_{L^\infty} \le C|\M|_{\infty}\|f\|_{L^1}^{\frac{p(\d-\s-2)-\d}{\d(p-1)}} \|f\|_{L^p}^{\frac{p(\s+2)}{\d(p-1)}}
\end{align}
and further supposing $\d\ge 3$, for any $\al \in (\s+2,\d-1-\frac{\as}{2})$,
\begin{align}\label{eq:vfreg12}
\|\Dm^{\frac{\as}{2}}v\|_{L^{\frac{2\d}{\as-2}}} \le C \begin{cases} \|f\|_{L^{\frac{\d}{\d-2-\s}}}  , & \s\le \d-1-\frac{\as}{2} \\ \|\Dm^\al f\|_{L^{\frac{\d}{\al + \d-2-\s}}}, & \s>\d-1-\frac{\as}{2}\end{cases} ,
\end{align}
where $|\cdot|_{\infty}$ denotes the operator norm and $C>0$ depends only on $\d,\s,\al$. 
\end{lemma}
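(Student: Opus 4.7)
The plan is to prove both estimates by reducing them to classical tools in harmonic analysis: H\"older's inequality with an interpolation optimization for (5.3), and Hardy-Littlewood-Sobolev (HLS) together with the $L^p$-boundedness of the Riesz transforms for (5.4).

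For (5.3), I would write $\nabla v = \M\,\nabla^{\otimes 2}\g \ast f$ and use the pointwise bound $|\nabla^{\otimes 2}\g(x)|\lesssim |x|^{-\s-2}$, which is meaningful in the sub-Coulomb regime $\s<\d-2$ since then $\s+2<\d$ makes this kernel locally integrable. For each $x\in\R^\d$ and cutoff $R>0$, I would split the convolution into a near part $|x-y|\le R$ and a far part $|x-y|>R$. H\"older's inequality with exponents $(p,p')$ controls the near part by $\|f\|_{L^p}\cdot (\int_{|y|\le R}|y|^{-(\s+2)p'}\,dy)^{1/p'}\lesssim \|f\|_{L^p}\,R^{\frac{\d(p-1)}{p}-\s-2}$, where the integral converges precisely because the hypothesis $p>\d/(\d-\s-2)$ forces $(\s+2)p'<\d$. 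The far part is trivially bounded by $R^{-\s-2}\|f\|_{L^1}$. Balancing the two terms by choosing $R$ so that $R^{\d(p-1)/p}\|f\|_{L^p}\sim\|f\|_{L^1}$ then yields (5.3) after reorganizing the exponents.

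For (5.4), I would first invoke the Fourier-multiplier identity
\begin{equation*}
\Dm^{\as/2}\nabla\g \ast f \;=\; c_{\d,\s,\as}\,\Dm^{\rho}\,R f,\qquad \rho \coloneqq \frac{\as}{2}+\s+1-\d,
\end{equation*}
which is immediate from $\widehat{\g}(\xi)\propto |\xi|^{\s-\d}$, where $R$ denotes the vector of Riesz transforms. Since the Riesz transforms are bounded on $L^q$ for every $1<q<\infty$ by Calder\'on-Zygmund theory, it suffices to estimate the Riesz potential $\Dm^\rho$. In the first branch $\s\le \d-1-\as/2$, one has $\rho\le 0$ and $\Dm^{-|\rho|}$ is a genuine Riesz potential (or the identity when $\rho=0$); HLS gives a bound from $L^{p_0}$ to $L^{2\d/(\as-2)}$ under the scaling relation $\frac{1}{p_0}-\frac{\as-2}{2\d}=\frac{|\rho|}{\d}$, which solves to $p_0 = \d/(\d-\s-2)$, as asserted. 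In the second branch $\s>\d-1-\as/2$, one has $\rho>0$, so $\Dm^\rho$ is a fractional differentiation; factoring $\Dm^\rho = \Dm^{\rho-\al}\Dm^\al$ with $\al$ chosen so that $\rho-\al<0$ (so $\Dm^{\rho-\al}$ is a Riesz potential) and that the resulting source exponent is $\ge 1$, HLS applied to $\Dm^{\rho-\al}$ gives the scaling $\frac{1}{p}=\frac{\as-2}{2\d}+\frac{\al-\rho}{\d}$, which simplifies to $p=\d/(\al+\d-2-\s)$, as claimed. The admissible window for $\al$ is non-empty precisely because $\as<\d+2$.

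The main obstacle, such as it is, is bookkeeping: every estimate used is classical, but one must carefully track the Sobolev scaling in each HLS application to match the stated Lebesgue exponents and verify that the constraints $1<p<q<\infty$, $\as>2$ (automatic from $\as>\d\ge 3$), and the working range of $\al$ are jointly compatible under the sub-Coulomb hypothesis $\s<\d-2$ and the standing condition $\d<\as<\d+2$.
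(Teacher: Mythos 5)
Your proposal is correct and follows essentially the same route as the paper: for \eqref{eq:vfreg11} the near/far splitting of the convolution with H\"older and optimization in $R$, and for \eqref{eq:vfreg12} the Fourier-multiplier reduction $\Dm^{\as/2}\nab\g\ast f \propto \Dm^{\as/2+1+\s-\d}Rf$ combined with Riesz-transform boundedness and Hardy--Littlewood--Sobolev (resp.\ fractional Sobolev embedding) in the two branches. Note that your kernel exponent $|x|^{-\s-2}$ is the one consistent with the hypothesis $p>\frac{\d}{\d-\s-2}$ and the stated exponents in \eqref{eq:vfreg11} (the paper's displayed $|x-y|^{2+\s-\d}$ appears to be a typo), so your bookkeeping is the correct one.
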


\begin{lemma}\label{lem:vfreg2}
Let $\al\ge 1$ and $1<p<\infty$. Then there exists a continuous, nondecreasing function $\W:[0,\infty)^3\rightarrow [0,\infty)$, depending only on $\d,\s,\al,p$ and vanishing if any of its arguments is zero, such that\footnote{In the course of proving the lemma, we show a more precise estimate. We choose to omit it here to simplify the exposition. Also, one could consider $p\in\{1,\infty\}$ for integer $\alpha$, replacing $\Dm^\al$ by $\nab^{\otimes \al}$.}
\begin{align}\label{eq:vfreg2}
\forall T\ge 0, \qquad \sup_{t\in [0,T]} \|\Dm^\al\mu^t\|_{L^p} \le \W(T,\|\mu^0\|_{L^\infty}, \|\Dm^\al\mu^0\|_{L^p}).
\end{align}
\end{lemma}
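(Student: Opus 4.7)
The plan is to derive an $L^p$ energy-type estimate for $w^t\coloneqq\Dm^{\al}\mu^t$ by differentiating in time, testing against $|w|^{p-2}w$, and applying Kato--Ponce type fractional Leibniz/commutator estimates to the nonlinear drift term. We then close via a Gr\"{o}nwall-type argument using the a priori non-increase of $\|\mu^t\|_{L^r}$ for every $r\in[1,\infty]$ together with \cref{lem:vfreg1} to control $\|\nabla v^t\|_{L^\infty}$ in terms of $\|\mu^0\|_{L^\infty}$, where $v^t\coloneqq \mathsf{V}^t-\M\nab\g\ast\mu^t$.

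First, apply $\Dm^{\al}$ to the PDE to obtain $\p_t w = \Dm^{\al}\div(\mu v) + \be^{-1}\Delta w$. Testing against $|w|^{p-2}w$, the diffusion term produces a nonpositive contribution $-\tfrac{(p-1)}{\be}\int|w|^{p-2}|\nab w|^2\le 0$. Decompose the drift as $\Dm^{\al}\div(\mu v) = \div(v w) + R$, where the remainder $R\coloneqq [\Dm^{\al},v\cdot\nab]\mu + \Dm^{\al}(\mu\div v) - w\div v$ has a genuine commutator structure. The transport-like contribution satisfies $|\int\div(vw)|w|^{p-2}w|\le \tfrac{1}{p}\|\div v\|_{L^\infty}\|w\|_{L^p}^p$. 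For $R$, the fractional Leibniz/commutator estimates (e.g.\ \cite{Li2019, KPV1993}) give a bound of the form
\begin{equation}
\|R\|_{L^p}\lesssim \|\nab v\|_{L^\infty}\|\Dm^{\al}\mu\|_{L^p} + \|\Dm^{\al+1}v\|_{L^{p_1}}\|\mu\|_{L^{p_2}},
\end{equation}
for suitable H\"{o}lder conjugate exponents with $1/p_1+1/p_2=1/p$.

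To control the higher-order term $\|\Dm^{\al+1}v\|_{L^{p_1}}$, exploit that $\nab\g$ acts as a Riesz potential of order $\d-\s-1>1$ (since we are in the sub-Coulomb range): writing $\Dm^{\al+1}\nab\g\ast\mu$ as a Riesz transform composed with $\Dm^{(\s+2-\d)/2}\Dm^{\al}\mu$, Hardy--Littlewood--Sobolev yields $\|\Dm^{\al+1}v\|_{L^{p_1}}\lesssim \|\Dm^{\al}\mu\|_{L^q}$ when $1/q=1/p_1+(\d-\s-2)/\d$. Interpolating $\|\Dm^{\al}\mu\|_{L^q}$ between $\|\Dm^{\al}\mu\|_{L^p}$ and the a priori bounded norms $\|\mu\|_{L^\infty}, \|\mu\|_{L^1}$ (or applying \cref{lem:vfreg1} style estimates), and using Young's inequality to reabsorb any power of $\|w\|_{L^p}$ exceeding $p$, one arrives at a differential inequality
\begin{equation}
\frac{d}{dt}\|w^t\|_{L^p}\le A(\|\mu^0\|_{L^\infty})\,\|w^t\|_{L^p} + B(\|\mu^0\|_{L^\infty})
\end{equation}
for continuous nondecreasing functions $A,B$ vanishing when their argument is zero. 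Integrating via Gr\"{o}nwall on $[0,T]$ yields the claimed $\W$.

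The main obstacle is dimensional bookkeeping in the borderline regime where $\s$ is close to $\d-2$: there the HLS gain $(\d-\s-2)/\d$ degenerates, so the interpolation above approaches an endpoint and the estimate for $\|\Dm^{\al+1}v\|_{L^{p_1}}$ risks falling back onto $\|\Dm^{\al}\mu\|_{L^p}$ itself with an unfavorable prefactor. This can be circumvented either by choosing $(p_1,p_2)$ so that the $\mu$-factor absorbs the lost Sobolev embedding (placing the bulk of derivatives on $\mu$ rather than $v$), or by running a short induction on $\lfloor\al\rfloor$ starting from the base case $\al\in[1,2)$ where the commutator has only one fractional derivative and reduces to a direct Kato--Ponce estimate. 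The same scheme handles the case $\be=\infty$ (dropping the dissipative term), and the argument is insensitive to the presence of the external field $\mathsf{V}^t$ under the assumptions of \cref{rem:extVFreg}.
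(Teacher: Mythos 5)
Your proposal follows essentially the same route as the paper's proof: an $L^p$ energy estimate for $\Dm^{\al}\mu^t$, a Leibniz/commutator decomposition of $\Dm^{\al}\div(\mu v)$ handled with the Kato--Ponce estimates of \cite{Li2019}, Hardy--Littlewood--Sobolev to convert derivatives of $\M\nab\g\ast\mu$ into derivatives of $\mu$ of order at most $\al$, Gagliardo--Nirenberg interpolation, the nonincrease of $\|\mu^t\|_{L^r}$, and Gr\"onwall. The only organizational difference is that you bound the pure transport contribution by $\|\div v\|_{L^\infty}\|\Dm^{\al}\mu\|_{L^p}^p$ (legitimate in the sub-Coulomb range, where \cref{lem:vfreg1} controls $\|\nab v\|_{L^\infty}$ by $\|\mu\|_{L^1}$ and $\|\mu\|_{L^\infty}$), whereas the paper discards that term using the sign condition $\div\M\nab\g\le 0$; either works here.

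Three points need repair when executing the scheme, since the bookkeeping you skip is essentially the whole content of the proof. First, your displayed bound for $\|R\|_{L^p}$ omits the Kato--Ponce cross term $\|\Dm^{\al}v\|_{L^{p_3}}\|\nab\mu\|_{L^{p_4}}$ (and its analogue coming from $\Dm^{\al}(\mu\,\div v)$); this is exactly the term whose treatment, via Gagliardo--Nirenberg interpolation of $\nab\mu$ between $\mu$ and $\Dm^{\al}\mu$, is where the hypothesis $\al\ge 1$ enters (the paper's terms $I_2$, $I_6$ and the commutators $\mathcal{C}_1,\mathcal{C}_2$), so it cannot be dropped. Second, Young's inequality absorbs powers of $\|\Dm^{\al}\mu\|_{L^p}$ strictly \emph{below} $p$, not exceeding $p$: to obtain the linear differential inequality you state, one must verify that every term carries total power at most $p$ of the top norm, which is precisely what the paper checks term by term; a power exceeding $p$ would only give local-in-time control and the lemma would fail as stated. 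Third, one cannot interpolate the top-order quantity $\|\Dm^{\al}\mu\|_{L^q}$, $q\ne p$, against the lower-order norms $\|\mu\|_{L^1},\|\mu\|_{L^\infty}$; instead choose the H\"older exponents so that Hardy--Littlewood--Sobolev lands the top-order factor exactly in $L^p$ (as the paper does for the term $I_5$) and place the $L^1$--$L^\infty$ interpolation entirely on the companion factor of $\mu$. With these corrections your argument coincides with the paper's proof.
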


Remark that the assumption $\al\ge 1$ in the statement is necessary for the argument to close. We begin with the proof of \cref{lem:vfreg1}, which is a consequence of some elementary harmonic analysis.

\begin{proof}[Proof of \cref{lem:vfreg1}]
Observe that for any $R>0$, 
\begin{align}
|\nab v(x)| &\le  C|\M|_{\infty}\int_{\R^\d} |x-y|^{2+\s-\d}f(y) \nn\\
&\le  C|\M|_{\infty}\Bigg(\int_{|x-y|\le R}|x-y|^{2+\s-\d}f(y)  + \int_{|x-y|> R}|x-y|^{2+\s-\d}f(y)\Bigg) \nn\\
&\le  C|\M|_{\infty}\Big( \|f\|_{L^p} R^{\frac{\d(p-1)}{p} + 2+\s-\d} + R^{2+\s-\d}\|f\|_{L^1}\Big),
\end{align}
where the ultimate line is by H\"older's inequality. Implicitly, we are using that $\s<\d-2$. Optimizing the choice of $R$ then yields \eqref{eq:vfreg11}.

Recalling that $\frac{1}{\cd}\g$ is the convolution kernel of the Fourier multiplier $\Dm^{\s-\d}$, we see that
\begin{align}
\|\Dm^{\frac{\as}{2}}v\|_{L^{\frac{2\d}{\as-2}}} = \cd \|\Dm^{\frac{\as}{2}+\s-\d}\M\nab\mu\|_{L^{\frac{2\d}{\as-2}}} \le C\cd|\M|_{\infty}\|\Dm^{\frac{\as}{2}+1+\s-\d}\mu\|_{L^{\frac{2\d}{\as-2}}},
\end{align}
where we have also used the boundedness of the Riesz transform on $L^{\frac{2\d}{\as-2}}$ to replace the operator $\M\nab$ by $|\M|_{\infty}\Dm$. Note that since we restrict to $\d\ge 3$ and $\as<\d+2$, we always have $2<\frac{2\d}{\as-2}<\infty$. 
The operator $\Dm^{\frac{\as}{2}+1+\s-\d}$ is smoothing if $\s<\d-1-\frac{\as}{2}$, the identity if $\s=\d-1-\frac{\as}{2}$, and differentiating if $\s>\d-1-\frac{\as}{2}$. 

If $\s\le \d-1-\frac{\as}{2}$ (the equality case is trivial), then the Hardy-Littlewood-Sobolev lemma implies that
\begin{align}
\|\Dm^{\frac{\as}{2}+1+\s-\d}f\|_{L^{\frac{2\d}{\as-2}}} \le C \|f\|_{L^{\frac{\d}{\d-2-\s}}}.
\end{align}
If $\s>\d-1-\frac{\as}{2}$, then Sobolev embedding implies that for any $\s+2> \al\ge \frac{\as}{2}+1+\s-\d$, 
\begin{align}
\|\Dm^{\frac{\as}{2}+1+\s-\d}f\|_{L^{\frac{2\d}{\as-2}}} \le C \|\Dm^{\al} f\|_{L^p}, \qquad p = \frac{\d}{\al + \d-2-\s}.
\end{align}
Together, these bounds yield \eqref{eq:vfreg12}.
\end{proof}

\begin{remark}
Evidently, we may allow for $\al>\s+2$ (the case $\al=\s+2$ is excluded due to the failure of the endpoint Sobolev embedding), provided that we replace the homogeneous seminorm on the right-hand side with the inhomogeneous norm $\|f\|_{W^{\al,1}}$.
\end{remark}

We now prove \cref{lem:vfreg2}.
\begin{proof}[Proof of \cref{lem:vfreg2}]
Observe from the chain rule and mean-field equation \eqref{eq:MFlims} that
\begin{align}
\frac{d}{dt} \|\Dm^{\al}\mu^t\|_{L^{p}}^p &= p\int_{\R^\d} |\Dm^{\al}\mu^t|^{p-2}\Dm^{\al}\mu^t \Dm^{\al}\div\Big(\mu^t\M\nab\g\ast\mu^t + \frac1\be\nab\mu^t\Big) \nn\\
&\le \underbrace{p\int_{\R^\d} |\Dm^{\al}\mu^t|^{p-2}\Dm^{\al}\mu^t \nab\Dm^\al\mu^t \cdot\M\nab\g\ast\mu^t}_{I_1} \nn\\
&\ph + \underbrace{p\int_{\R^\d} |\Dm^{\al}\mu^t|^{p-2}\Dm^{\al}\mu^t \nab\mu^t \cdot \Dm^{\al}\M\nab\g\ast\mu^t}_{I_2} \nn\\
&\ph -  \underbrace{p\al\indic_{\al\ge1}\int_{\R^\d} |\Dm^{\al}\mu^t|^{p-2}\Dm^{\al}\mu^t \nab^{\otimes 2}\Dm^{\al-2}\mu^t : \nab\M\nab\g\ast\mu^t}_{I_3} \nn\\
&\ph  +  \underbrace{p\int_{\R^\d} |\Dm^{\al}\mu^t|^{p-2}\Dm^{\al}\mu^t \Dm^{\al}\mu^t  \div\M\nab\g\ast\mu^t}_{I_4}  \nn\\ 
&\ph  +  \underbrace{p\int_{\R^\d} |\Dm^{\al}\mu^t|^{p-2}\Dm^{\al}\mu^t \mu^t  \div\Dm^{\al}\M\nab\g\ast\mu^t}_{I_5} \nn\\
&\ph  -  \underbrace{p\al\indic_{\al\ge1}\int_{\R^\d} |\Dm^{\al}\mu^t|^{p-2}\Dm^{\al}\mu^t \nab\Dm^{\al-2}\mu^t \cdot \nab\div\M\nab\g\ast\mu^t}_{I_6} \nn\\
&\ph + \underbrace{\int_{\R^\d} |\Dm^{\al}\mu^t|^{p-2}\Dm^{\al}\mu^t\Big(\mathcal{C}_1^t  + \mathcal{C}_2^t\Big)}_{I_7},
\end{align}
where we have abbreviated the (higher-order) commutators 
\begin{multline}
\mathcal{C}_1^t \coloneqq  \Dm^\al\Big(\nab\mu^t \cdot \M\nab\g\ast\mu^t\Big) - \nab\Dm^\al\mu^t \cdot \M\nab\g\ast\mu^t - \nab\mu^t \cdot\Dm^\al\M\nab\g\ast\mu^t \\
 +\al\indic_{\al\ge 1}\nab^{\otimes 2}\Dm^{\al-2}\mu^t : \nab\M\nab\g\ast\mu^t,
\end{multline}
\begin{multline}
\mathcal{C}_2^t \coloneqq\Dm^\al\Big(\mu^t \div\M\nab\g\ast\mu^t\Big) - \Dm^\al\mu^t \div\M\nab\g\ast\mu^t - \mu^t \Dm^\al\div\M\nab\g\ast\mu^t \\
 +\al\indic_{\al\ge 1}\nab\Dm^{\al-2}\mu^t \cdot  \nab\div\M\nab\g\ast\mu^t.
\end{multline}
Remark that if $\M$ is antisymmetric, then all terms involving $\div\M\nab$ vanish.

Integrating by parts, we see that
\begin{align}
I_1 +   I_4 = (p-1)\int_{\R^\d}  |\Dm^{\al}\mu^t|^{p} \div\M\nab\g\ast\mu^t. \label{eq:MFreg0}
\end{align}
By our assumption \eqref{eq:Mnd} on $\M$ and the explicit form of $\g$, we have that $\div\M\nab\g \le 0$. As $\mu^t\ge 0$, it follows that the preceding right-hand side is nonpositive and may be discarded. We estimate directly the remaining terms.

Assuming that $\al\ge 1$ (otherwise, the estimate will not close due to the $\nab\mu^t$ factor below), H\"older's inequality yields for $\frac{1}{p_1} + \frac{1}{p_2} = \frac1p$,
\begin{align}
I_2 &\le \|\Dm^\al \mu^t\|_{L^p}^{p-1} \|\nab\mu^t\|_{L^{p_1}} \|\Dm^{\al}\M\nab\g\ast\mu^t\|_{L^{p_2}} \nn\\
 &\le C|\M|_{\infty} \|\Dm^\al \mu^t\|_{L^p}^{p-1} \|\nab\mu^t\|_{L^{p_1}} \|\Dm^{\al+1+\s-\d}\mu^t\|_{L^{p_2}}.
\end{align}
If $\al + 1+\s-\d \le 0$, then we choose $(p_1, p_2) = (p,\infty)$ and Gagliardo-Nirenberg interpolation \cite[Theorem 2.44]{BCD2011} plus H\"older's inequality imply
\begin{align}\label{eq:HolGN1}
 \|\nab\mu^t\|_{L^{p_1}} \|\Dm^{\al+1+\s-\d}\mu^t\|_{L^{p_2}} \le C\|\mu^t\|_{L^p}^{1-\frac{1}{\al}} \|\Dm^\al\mu^t\|_{L^p}^{\frac{1}{\al}} \|\mu^t\|_{L^1}^{1-\frac{(\al+1+\s)}{\d}} \|\mu^t\|_{L^{\infty}}^{\frac{(\al+1+\s)}{\d}}.
\end{align}
If $\al+1+\s-\d>0$, then we choose $(p_1,p_2) = (\al p, \frac{\al p}{\al-1})$ so that by Gagliardo-Nirenberg interpolation,
\begin{align}\label{eq:HolGN1'}
 \|\nab\mu^t\|_{L^{p_1}} \|\Dm^{\al+1+\s-\d}\mu^t\|_{L^{p_2}}  &\le C\|\mu^t\|_{L^\infty}^{\frac{\al-1}{\al}}   \|\mu^t\|_{L^{\frac{p(\d-1-\s)}{\d-2-\s}}}^{\frac{\d-1-\s}{\al}} \|\Dm^\al\mu^t\|_{L^p}^{\frac{\al+2+\s-\d}{\al}}.
\end{align}
Thus, in all cases,
\begin{multline}
I_2\le C|\M|_{\infty}\|\Dm^\al \mu^t\|_{L^p}^{p-1} \Big(\|\mu^t\|_{L^p}^{1-\frac{1}{\al}} \|\Dm^\al\mu^t\|_{L^p}^{\frac{1}{\al}} \|\mu^t\|_{L^1}^{1-\frac{(\al+1+\s)}{\d}} \|\mu^t\|_{L^{\infty}}^{\frac{(\al+1+\s)}{\d}}\indic_{\al+1+\s-\d\le 0} \\
 + \|\mu^t\|_{L^\infty}^{\frac{\al-1}{\al}}   \|\mu^t\|_{L^{\frac{p(\d-1-\s)}{\d-2-\s}}}^{\frac{\d-1-\s}{\al}} \|\Dm^\al\mu^t\|_{L^p}^{\frac{\al+2+\s-\d}{\al}}\indic_{\al+1+\s-\d> 0}\Big). \label{eq:MFreg1}
\end{multline}


Arguing similarly, using also the $L^p$ boundedness of the Riesz transform, 
\begin{align}
I_3 &\le \|\Dm^\al\mu^t\|_{L^{p}}^{p-1} \|\nab^{\otimes 2}\Dm^{\al-2}\mu^t\|_{L^p} \|\nab\M\nab\g\ast\mu^t\|_{L^\infty} \nn\\
&\le C|\M|_{\infty}\|\Dm^\al\mu^t\|_{L^{p}}^{p}\|\Dm^{2+\s-\d}\mu^t\|_{L^\infty} \nn\\
&\le C'|\M|_{\infty}\|\Dm^\al\mu^t\|_{L^{p}}^{p} \|\mu^t\|_{L^1}^{1-\frac{\d-\s-2}{\d}} \|\mu^t\|_{L^\infty}^{\frac{\d-\s-2}{\d}}, \label{eq:MFreg2}
\end{align}
where we have also used H\"older's inequality on the $L^\infty$ factor in obtaining the final line. 
Implicitly, we are using the assumption $\s<\d-2$. 

Again using H\"older's inequality, we find that
\begin{align}
I_5 \le \|\Dm^\al\mu^t\|_{L^p}^{p-1} \|\mu^t\|_{L^{p_1}} \| \div\Dm^{\al}\M\nab\g\ast\mu^t\|_{L^{p_2}},
\end{align}
where $\frac{1}{p_1}+\frac{1}{p_2} = \frac1p$. We choose $p_2$ so that
\begin{align}
\d-\s-2 = \d\Big(\frac{1}{p} - \frac{1}{p_2}\Big) \Rightarrow p_2 = \frac{\d p}{\d-p(\d-\s-2)},
\end{align}
which, by the Hardy-Littlewood-Sobolev lemma, implies that
\begin{align}
\| \div\Dm^{\al}\M\nab\g\ast\mu^t\|_{L^{p_2}} \le C|\M|_{\infty} \|\Dm^{\al}\mu^t\|_{L^p}.
\end{align}
Therefore,
\begin{align}
I_5  \le C|\M|_{\infty}\|\Dm^\al\mu^t\|_{L^p}^p \|\mu^t\|_{L^{\frac{\d}{\d-\s-2}}}. \label{eq:MFreg3}
\end{align}

Next, we use H\"older's inequality to bound
\begin{align}
I_6  &\le \|\Dm^\al\mu^t\|_{L^p}^{p-1} \|\Dm^{\al-1}\mu^t\|_{L^{p_1}} \|\nab\div\M\nab\g\ast\mu^t\|_{L^{p_2}} \nn\\
&\le C|\M|_{\infty}\|\Dm^\al\mu^t\|_{L^p}^{p-1} \|\Dm^{\al-1}\mu^t\|_{L^{p_1}} \|\Dm^{3+\s-\d}\mu^t\|_{L^{p_2}},
\end{align}
where $\frac{1}{p_1}+\frac{1}{p_2}=\frac{1}{p}$. If $3+\s-\d\le 0$, then we may choose $(p_1,p_2) = (p,\infty)$ and use Gagliardo-Nirenberg interpolation plus H\"older's inequality (similar to \eqref{eq:HolGN1}) to estimate
\begin{align}
\|\Dm^{\al-1}\mu^t\|_{L^{p_1}} \|\Dm^{3+\s-\d}\mu^t\|_{L^{p_2}} \le C\|\mu^t\|_{L^p}^{\frac{1}{\al}} \|\Dm^\al\mu^t\|_{L^p}^{\frac{\al-1}{\al}} \|\mu^t\|_{L^1}^{1-\frac{\d-\s-3}{\d}} \|\mu^t\|_{L^\infty}^{\frac{\d-\s-3}{\d}}.
\end{align}
If $3+\s-\d>0$, then (similar to \eqref{eq:HolGN1'}) we choose $(p_1,p_2) = (\frac{\al p}{p-1},\al p)$ and use Gagliardo-Nirenberg interpolation to obtain
\begin{align}
\|\Dm^{\al-1}\mu^t\|_{L^{p_1}} \|\Dm^{3+\s-\d}\mu^t\|_{L^{p_2}}  \le \|\mu^t\|_{L^\infty}^{\frac{1}{\al}} \|\Dm^\al\mu^t\|_{L^p}^{\frac{\al-1}{\al}} \|\mu^t\|_{L^{\frac{p(\al+\d-3-\s)}{\d-\s-2}}}^{\frac{\al+\d-3-\s}{\al}} \|\Dm^\al\mu^t\|_{L^p}^{\frac{3+\s-\d}{\al}}.
\end{align}
Thus, in all cases, we have
\begin{multline}
I_6 \le C|\M|_{\infty}\|\Dm^\al\mu^t\|_{L^p}^{p-\frac1\al}\Big(\|\mu^t\|_{L^p}^{\frac1\al}\|\mu^t\|_{L^1}^{1-\frac{\d-\s-3}{\d}} \|\mu^t\|_{L^\infty}^{\frac{\d-\s-3}{\d}}\indic_{3+\s-\d\le 0} \\
+ \|\mu^t\|_{L^\infty}^{\frac{1}{\al}}  \|\mu^t\|_{L^{\frac{p(\al+\d-3-\s)}{\d-\s-2}}}^{\frac{\al+\d-3-\s}{\al}} \|\Dm^\al\mu^t\|_{L^p}^{\frac{3+\s-\d}{\al}}\indic_{3+\s-\d>0}\Big). \label{eq:MFreg4}
\end{multline}

Finally, we handle the commutator terms. Using the Kato-Ponce commutator inequality \cite[Theorem 5.1]{Li2019}, we find that
\begin{align}
\|\mathcal{C}_1^t\|_{L^p} \le C\Big(\|\nab\mu^t \|_{L^{p_1}}\|\Dm^{\al}\M\nab\g\ast\mu^t\|_{L^{p_2}}  + \indic_{\al\ge 2} \|\nab\Dm^{\al-1}\mu^t \|_{L^{p_3}} \|\nab\M\nab\g\ast\mu^t \|_{L^{p_4}}\Big),
\end{align}
where $C>0$ depends only on $\d,\s,p$. Here, $1 < p_1,p_2, p_3, p_4 \le  \infty$ are such that $\frac{1}{p_1} + \frac{1}{p_2} = \frac{1}{p_3} + \frac{1}{p_4} = \frac{1}{p}$. Each of the terms on the right-hand side have been estimated above, and we find that
\begin{multline}\label{eq:MFreg5C1}
\|\mathcal{C}_1^t\|_{L^p} \le C|\M|_{\infty}\Big(\|\mu^t\|_{L^p}^{1-\frac{1}{\al}} \|\Dm^\al\mu^t\|_{L^p}^{\frac{1}{\al}} \|\mu^t\|_{L^1}^{1-\frac{(\al+1+\s)}{\d}} \|\mu^t\|_{L^{\infty}}^{\frac{(\al+1+\s)}{\d}}\indic_{\al+1+\s-\d\le 0} \\
+ \|\mu^t\|_{L^\infty}^{\frac{\al-1}{\al}}   \|\mu^t\|_{L^{\frac{p(\d-1-\s)}{\d-2-\s}}}^{\frac{\d-1-\s}{\al}} \|\Dm^\al\mu^t\|_{L^p}^{\frac{\al+2+\s-\d}{\al}}\indic_{\substack{\al+1+\s-\d> 0 }} +  \|\Dm^\al\mu^t\|_{L^{p}} \|\mu^t\|_{L^1}^{1-\frac{\d-\s-2}{\d}} \|\mu^t\|_{L^\infty}^{\frac{\d-\s-2}{\d}}\indic_{\al\ge 2}\Big).
\end{multline}
Similarly, 
\begin{align}
\|\mathcal{C}_2^t\|_{L^p} \le C\Big(\|\mu^t \|_{L^{p_1}}\|\div\Dm^{\al}\M\nab\g\ast\mu^t\|_{L^{p_2}}  + \indic_{\al\ge 2} \|\Dm^{\al-1}\mu^t \|_{L^{p_3}} \|\nab\div\M\nab\g\ast\mu^t \|_{L^{p_4}}\Big),
\end{align}
with the $p_i$ as above. Again, we note that each of the terms on the right-hand side has been previously estimated, and we find that
\begin{multline}\label{eq:MFreg5C2}
\|\mathcal{C}_2^t\|_{L^p} \le C|\M|_{\infty}\Big(\|\mu^t\|_{L^{\frac{\d}{\d-\s-2}}}\|\Dm^\al\mu^t\|_{L^p} + \|\mu^t\|_{L^p}^{\frac{1}{\al}} \|\Dm^\al\mu^t\|_{L^p}^{\frac{\al-1}{\al}} \|\mu^t\|_{L^1}^{1-\frac{\d-\s-3}{\d}} \|\mu^t\|_{L^\infty}^{\frac{\d-\s-3}{\d}}\indic_{\substack{3+\s-\d\le 0 \\ \al\ge 2}}\\
+\|\mu^t\|_{L^\infty}^{\frac{1}{\al}} \|\Dm^\al\mu^t\|_{L^p}^{\frac{\al-1}{\al}} \|\mu^t\|_{L^{\frac{p(\al+\d-3-\s)}{\d-\s-2}}}^{\frac{\al+\d-3-\s}{\al}} \|\Dm^\al\mu^t\|_{L^p}^{\frac{3+\s-\d}{\al}}\indic_{\substack{3+\s-\d> 0 \\ \al\ge 2}} \Big).
\end{multline} 
Combining the preceding estimates \eqref{eq:MFreg5C1}, \eqref{eq:MFreg5C2} with H\"older's and triangle inequalities, we conclude that
\begin{multline}
I_7 \le C|\M|_{\infty}\|\Dm^\al\mu^t\|_{L^p}^{p-1} \Big(\|\mu^t\|_{L^p}^{1-\frac{1}{\al}} \|\Dm^\al\mu^t\|_{L^p}^{\frac{1}{\al}} \|\mu^t\|_{L^1}^{1-\frac{(\al+1+\s)}{\d}} \|\mu^t\|_{L^{\infty}}^{\frac{(\al+1+\s)}{\d}}\indic_{\al+1+\s-\d\le 0} \\
+ \|\mu^t\|_{L^\infty}^{\frac{\al-1}{\al}}   \|\mu^t\|_{L^{\frac{p(\d-1-\s)}{\d-2-\s}}}^{\frac{\d-1-\s}{\al}} \|\Dm^\al\mu^t\|_{L^p}^{\frac{\al+2+\s-\d}{\al}}\indic_{\substack{\al+1+\s-\d> 0 }} +  \|\Dm^\al\mu^t\|_{L^{p}} \|\mu^t\|_{L^1}^{1-\frac{\d-\s-2}{\d}} \|\mu^t\|_{L^\infty}^{\frac{\d-\s-2}{\d}}\indic_{\al\ge 2}\\
+\|\mu^t\|_{L^{\frac{\d}{\d-\s-2}}}\|\Dm^\al\mu^t\|_{L^p} + \|\mu^t\|_{L^p}^{\frac{1}{\al}} \|\Dm^\al\mu^t\|_{L^p}^{\frac{\al-1}{\al}} \|\mu^t\|_{L^1}^{1-\frac{\d-\s-3}{\d}} \|\mu^t\|_{L^\infty}^{\frac{\d-\s-3}{\d}}\indic_{\substack{3+\s-\d\le 0 \\ \al\ge 2}}\\
+\|\mu^t\|_{L^\infty}^{\frac{1}{\al}} \|\Dm^\al\mu^t\|_{L^p}^{\frac{\al-1}{\al}} \|\mu^t\|_{L^{\frac{p(\al+\d-3-\s)}{\d-\s-2}}}^{\frac{\al+\d-3-\s}{\al}} \|\Dm^\al\mu^t\|_{L^p}^{\frac{3+\s-\d}{\al}}\indic_{\substack{3+\s-\d> 0 \\ \al\ge 2}}\Big). \label{eq:MFreg5}
\end{multline}

Collecting the estimates \eqref{eq:MFreg0}, \eqref{eq:MFreg1}, \eqref{eq:MFreg2}, \eqref{eq:MFreg3}, \eqref{eq:MFreg4}, \eqref{eq:MFreg5}, we note that the exponent of the $\|\Dm^\al\mu^t\|_{L^p}$ factor is always $\le p$. The desired conclusion now follows from Gr\"onwall's inequality and the nonincrease of $\|\mu^t\|_{L^r}$ for any $1\le r\le \infty$, using interpolation to control all the factors $\|\mu^t\|_{L^r}$ in terms of $\|\mu^t\|_{L^1}$ and $\|\mu^t\|_{L^\infty}$.
\end{proof}

\appendix

\section{Properties of the Bessel Potential}\label{ap:bessel}

In this appendix, we prove \cref{lem:bespot}, which records various important properties of the Bessel potential.

\begin{proof}[Proof of \cref{lem:bespot}]
	Using the gamma function identity $A^{-\frac{\as}{2}} = \frac{1}{\Gamma(\as/2)}\int_0^\infty e^{-t A}t^{\as/2}\frac{dt}{t}$, we see that
	\begin{align}
		(1+4\pi^2|\xi|^2)^{-\as/2} = \frac{1}{\Gamma(\as/2)}\int_0^\infty e^{-t}e^{-\pi |2\sqrt{t\pi}\xi|^2}t^{\as/2}\frac{dt}{t}.
	\end{align}
	Taking inverse Fourier transforms of both sides,
	\begin{align}
		G_\as(x) =  \frac{1}{(2\sqrt{\pi})^{\d}\Gamma(\as/2)} \int_0^\infty e^{-t}e^{-|x|^2/4t} t^{\frac{\as-\d}{2}} \frac{dt}{t}.
	\end{align}
	It follows immediately from this identity that $G_\as>0$, smooth on $\R^\d$, and $\int_{\R^\d}G_\as = 1$. That $G_\as \in \mathcal{C}^{\as-\d}$ is easily seen through the Littlewood-Paley characterization of these spaces.
	
	Suppose that $|x|\ge 2$. Let $\vec\al \in \N_0^\d$ be a multi-index. Differentiating inside the integral,
	\begin{align}\label{eq:palGs}
		\p_{\vec\al}G_\as(x) =  \frac{(-2)^{-|\vec\al|}}{(2\sqrt{\pi})^{\d}\Gamma(\as/2)} \int_0^\infty \prod_{i=1}^\d H_{\al_i}(x_i/2\sqrt{t}) e^{-t}e^{-|x|^2/4t} t^{\frac{\as-|\vec\al|-\d}{2}} \frac{dt}{t},
	\end{align} 
	where $H_n$ is the $n$-th (physicist's) Hermite polynomial. Since $|H_{n}(y)|\le C_n(1+|y|^n)$ and
	\begin{align}
		t+\frac{|x|^2}{ct} \ge \max\Big(\frac{|x|}{\sqrt{c}},t+\frac{4}{ct}\Big) \Longrightarrow  t+\frac{|x|^2}{ct}  \ge \frac{|x|}{2\sqrt{c}} + \frac{t}{2}+\frac{2}{ct},
	\end{align}
	it follows that for any $c>4$, 
	\begin{align}
		|\p_{\vec\al}G_\as(x)| \le \frac{C_{|\vec\al|}}{(2\sqrt{\pi})^{\d}\Gamma(\as/2)} e^{-\frac{|x|}{2\sqrt{c}}}\int_0^\infty e^{-t/2}e^{-2/(ct)}  t^{\frac{\as-|\vec\al|-\d}{2}} \frac{dt}{t}.
	\end{align}
	Evidently, the integral in $t$ converges, yielding us \eqref{eq:bespot1}. 
	
	For $|x|\le 2$, we decompose
	\begin{align}
		G_\as = G_{\as,1} + G_{\as,2} + G_{\as,3},
	\end{align}
	where
	\begin{align}
		G_{\as,1}(x) &\coloneqq |x|^{\as-\d} \frac{1}{(2\sqrt{\pi})^\d \Gamma(\as/2)}\int_0^1 e^{-t|x|^2}e^{-1/4t}t^{\frac{\as-\d}{2}}\frac{dt}{t}, \\
		G_{\as,2}(x) &\coloneqq \frac{1}{(2\sqrt{\pi})^\d \Gamma(\as/2)}\int_{|x|^2}^4 e^{-t}e^{-|x|^2/4t}t^{\frac{\as-\d}{2}}\frac{dt}{t}, \\
		G_{\as,3}(x) &\coloneqq \frac{1}{(2\sqrt{\pi})^\d \Gamma(\as/2)}\int_{4}^\infty e^{-t}e^{-|x|^2/4t}t^{\frac{\as-\d}{2}}\frac{dt}{t},
	\end{align}
	and we note that we have used a change of variables in defining $G_{\as,1}$. By the Leibniz rule,
	\begin{align}
		\p_{\vec\al}G_{\as,1}(x) &= \sum_{\vec\be\le \vec\al} {\vec\al\choose\vec\be} \p_{\vec\be}(|x|^{\as-\d})\frac{1}{(2\sqrt{\pi})^\d \Gamma(\as/2)}\int_0^1 \p_{\vec{\al}-\vec\be}(e^{-t|x|^2})e^{-1/4t}t^{\frac{\as-\d}{2}}\frac{dt}{t} \nn\\
		&=\sum_{\vec\be\le \vec\al} {\vec\al\choose\vec\be} \p_{\vec\be}(|x|^{\as-\d})\frac{1}{(2\sqrt{\pi})^\d \Gamma(\as/2)}\int_0^1 (-\sqrt{t})^{|\vec\al-\vec\be|} \prod_{i=1}^\d H_{\al_i-\be_i}(\sqrt{t}x_i)e^{-t|x|^2}e^{-1/4t}t^{\frac{\as-\d}{2}}\frac{dt}{t}.
	\end{align}
	If $\vec\be\ne \vec\al$, then  we may bound $|\p_{\vec\be}|x|^{\as-\d}| \le C |x|^{\as-|\vec\be|-\d}$, $|\sqrt{t}x| \le 2$, and $e^{-t|x|^2}\le 1$, to obtain
	\begin{multline}
		\Bigg|\p_{\vec\be}(|x|^{\as-\d})\frac{1}{(2\sqrt{\pi})^\d \Gamma(\as/2)}\int_0^1 (-\sqrt{t})^{|\vec\al-\vec\be|} \prod_{i=1}^\d H_{\al_i-\be_i}(\sqrt{t}x_i)e^{-t|x|^2}e^{-1/4t}t^{\frac{\as-\d}{2}}\frac{dt}{t}\Bigg| \\
		\le \frac{C_{|\vec\al|}}{(2\sqrt{\pi})^\d \Gamma(\as/2)} |x|^{\as-\d-|\vec\be|} \int_0^1 e^{-1/4t} t^{\frac{\as-\d}{2}} \frac{dt}{t},
	\end{multline}
	where the integral in $t$ evidently converges. If $\vec\be=\vec\al$, then Taylor expanding $e^{-t|x|^2} = 1+O(t|x|^2)$ it follows that
	\begin{align}
		\Bigg|\frac{\p_{\vec\al}(|x|^{\as-\d})}{(2\sqrt{\pi})^\d \Gamma(\as/2)}\int_0^1\Big(e^{-t|x|^2}-1\Big)e^{-1/4t}t^{\frac{\as-\d}{2}}\frac{dt}{t}\Bigg| &\le  \frac{C_{|\vec\al|} |x|^{2} |\p_{\vec\al}|x|^{\as-\d}|}{(2\sqrt{\pi})^\d \Gamma(\as/2)}\int_0^1 e^{-1/4t}t^{\frac{\as-\d}{2}}dt.
	\end{align}
	We conclude that if $\vec\al = \vec{0}$, 
	\begin{align}\label{eq:Ga11}
		{G_{\as,1}(x)} = C_{\as,\d}|x|^{\as-\d} + O(|x|^{\as+2-\d}),
	\end{align}
	and in general,
	\begin{align}\label{eq:Ga12}
		|{\p_{\vec\al}G_{\as,1}(x)}| \lesssim_{\as,|\vec\al|,\d} {|x|^{\as-|\vec\al|-\d}}.
	\end{align}
	
	For $\vec\al = (\al^1,\ldots,\al^\d)$, letting $\vec{\al}_i \coloneqq \vec\al - \vec{e}_i$, we have by the Leibniz rule and fundamental theorem of calculus,
	\begin{multline}
		\p_{\vec\al}G_{\as,2}(x) = \frac{1}{(2\sqrt{\pi})^\d \Gamma(\as/2)} \sum_{1\le i\le \d : \al^i > 0}{\al^i} \p_{\vec{\al}_i}{\Big(-2x^i  e^{-|x|^2} e^{-1/4} |x|^{\as-\d-2}\Big)}\\
		+ \frac{1}{(2\sqrt{\pi})^\d \Gamma(\as/2)} \int_{|x|^2}^4 e^{-t}\p_{\vec\al} \Big(e^{-|x|^2/4t}\Big) t^{\frac{\as-\d}{2}}\frac{dt}{t}.
	\end{multline}
	If $\vec\al = \vec{0}$, then using the pointwise bound $e^{-\frac{17}{4}} \le e^{-t-\frac{|x|^2}{4t}} \le 1$, we estimate
	\begin{align}\label{eq:Ga21}
		\int_{|x|^2}^4 e^{-t}\Big(e^{-|x|^2/4t}\Big) t^{\frac{\as-\d}{2}}\frac{dt}{t} \approx \int_{|x|^2}^4  e^{-|x|^2/4t}t^{\frac{\as-\d}{2}}\frac{dt}{t}  = \begin{cases} \frac{2}{\as-\d}(2^{\as-\d} - |x|^{\as-\d}) , & {\as\ne \d} \\ -2\log(\frac{|x|}{2}),& {\as =\d}. \end{cases}
	\end{align}
	For general $\vec\al$, writing
	\begin{align}
		-x^i|x|^{\as-\d-2} = \p_{i}\begin{cases}\frac{|x|^{\as-\d}}{\as-\d}, & {\as \ne \d} \\ -\log|x|, & {\as=\d}, \end{cases}
	\end{align}
	it is immediate from further application of the Leibniz rule that
	\begin{align}
		\frac{1}{(2\sqrt{\pi})^\d \Gamma(\as/2)} \sum_{1\le i\le \d : \al^i > 0}\Big|{\al^i} \p_{\vec{\al}_i}{\Big(-2x^i  e^{-|x|^2} e^{-1/4} |x|^{\as-\d-2}\Big)}\Big| \lesssim_{\d,\as,|\vec\al|} |x|^{\as-\d-|\vec\al|}. \label{eq:Ga22'}
	\end{align}
	Furthermore, using \eqref{eq:palGs} and that $\frac{|x|}{2\sqrt{t}} \le \frac12$, we crudely majorize
	\begin{align}
		&\frac{1}{(2\sqrt{\pi})^\d \Gamma(\as/2)} \int_{|x|^2}^4 e^{-t}\Big|\p_{\vec\al} \Big(e^{-|x|^2/4t}\Big)\Big| t^{\frac{\as-\d}{2}}\frac{dt}{t} \nn\\
		&\le \frac{1}{(2\sqrt{\pi})^\d \Gamma(\as/2)}\int_{|x|^2}^4 (2\sqrt{t})^{-|\vec\al|}\prod_{i=1}^\d   |H_{\al_i}(x_i/2\sqrt{t})| e^{-t}e^{-|x|^2/4t} t^{\frac{\as-\d}{2}}\frac{dt}{t}  \nn\\
		&\lesssim_{\d,|\vec\al|}  \begin{cases} \frac{2^{\as-\d-|\vec\al|} - |x|^{\as-\d-|\vec\al|}}{{(\as-\d-|\vec\al|)}}, & {\as-\d\ne |\vec\al|} \\ \log(\frac{|x|}{2}) , & {\as-\d = |\vec\al|}. \end{cases} \label{eq:Ga22''}
	\end{align}
	Together \eqref{eq:Ga22'}, \eqref{eq:Ga22''}, yield
	\begin{align}\label{eq:Ga22}
		|\p_{\vec\al}G_{\as,2}(x)| \lesssim_{\d,\as,|\vec\al|}  1 +  \begin{cases} \frac{2^{\as-\d-|\vec\al|} - |x|^{\as-\d-|\vec\al|}}{{(\as-\d-|\vec\al|)}}, & {\as-\d\ne |\vec\al|} \\ -\log(\frac{|x|}{2}) , & {\as-\d = |\vec\al|}. \end{cases}
	\end{align}
	
	Lastly, using that $0\le\frac{|x|^2}{4t} \le \frac14$, we have that if $\vec\al=\vec 0$,
	\begin{align}\label{eq:Ga31}
		G_{\as,3}(x) \approx \int_4^\infty e^{-t}t^{\frac{\as-\d}{2}}\frac{dt}{t}.
	\end{align}
	For general $\vec\al$, we find that 
	\begin{align}
		|\p_{\vec\al}G_{\as,3}(x)| &\le \frac{1}{(2\sqrt{\pi})^\d \Gamma(\as/2)}\int_{4}^\infty (2\sqrt{t})^{-|\vec\al|}\prod_{i=1}^\d   |H_{\al_i}(x_i/2\sqrt{t})| e^{-t}e^{-|x|^2/4t} t^{\frac{\as-\d}{2}}\frac{dt}{t} \nn\\
		&\le \frac{C_{|\vec\al|}}{(2\sqrt{\pi})^\d \Gamma(\as/2)}\int_{4}^\infty  t^{\frac{\as-|\vec\al|-\d}{2}} e^{-t}\frac{dt}{t}. \label{eq:Ga32}
	\end{align}
	
	Combining the bounds \eqref{eq:Ga11}, \eqref{eq:Ga21}, \eqref{eq:Ga31} yields the assertion \eqref{eq:bespot2}; and combining \eqref{eq:Ga12}, \eqref{eq:Ga22}, \eqref{eq:Ga32} yields \eqref{eq:bespot2'}.
	
	Finally, recalling the identity \eqref{eq:palGs}, majorizing $|H_{\al^i}(y)| \le C_{\al^i}(1+|y|)^{\al^i}$, and using that for any $c<1$, there is a constant $C_{|\vec\al|,c}>0$ such that $(1+|y|)^{|\vec\al|}e^{-|y|}\le C_{|\vec\al|,c} e^{-c|y|}$, we see that
	\begin{align}
		|x|^{|\vec\al|} |\p_{\vec\al}G_\as(x)| &\le C_{|\vec\al|,\epsilon}\frac{2^{|\vec\al|}}{(2\sqrt{\pi})^{\d}\Gamma(\as/2)} \int_0^\infty  e^{-t}e^{-|x|^2/(4+\epsilon)t} t^{\frac{\as-|\vec\al|-\d}{2}} \frac{dt}{t} \nn\\
		&=C_{|\vec\al|,\epsilon}' G_{\as}(\frac{4}{4+\epsilon}x),
	\end{align}
	for any $\epsilon>0$. This establishes \eqref{eq:bespot3} and completes the proof of the lemma.
	
\end{proof}

\bibliographystyle{alpha}\bibliography{../../MASTER}

\end{document}